\let\oldtocsection=\tocsection
\let\oldtocsubsection=\tocsubsection
\renewcommand{\tocsection}[2]{\hspace{0em}\oldtocsection{#1}{#2}}
\renewcommand{\tocsubsection}[2]{\hspace{1em}\oldtocsubsection{#1}{#2}}
\newtheorem*{THM}{Theorem}
\newtheorem{Thm}{Theorem}[section]
\newtheorem{Lem}[Thm]{Lemma}
\newtheorem{Cor}[Thm]{Corollary}
\newtheorem{Prop}[Thm]{Proposition}
\theoremstyle{remark}
\newtheorem{Rem}[Thm]{Remark}
\theoremstyle{remark}
\newtheorem{example}[Thm]{Example}
\theoremstyle{definition}
\theoremstyle{definition}
\theoremstyle{definition}
\newtheorem{Def}[Thm]{Definition}
\numberwithin{equation}{section}
\newcommand{\A}{\mathbb{ A}}
\newcommand{\R}{\mathbb{ R}}           
\newcommand{\C}{\mathbb{C}}           
\newcommand{\N}{\mathbb{ N}}           
\newcommand{\Z}{\mathbb{ Z}}           
\newcommand{\PP}{\mathbb{ P}} 
\newcommand{\V}{\mathbb{ V}}   
\newcommand{\QQ}{\mathbb{ Q}}        
\newcommand{\Ad}{\operatorname{Ad }}             
\newcommand{\ad}{\operatorname{ad}}             
\newcommand{\spec}{\operatorname{Spec}}
\renewcommand{\ker}{\operatorname{ker }}
\newcommand{\diag}{\operatorname{diag}}
\newcommand{\fb}{{\mathfrak b}}
\newcommand{\fg}{{\mathfrak g}}
\newcommand{\ft}{{\mathfrak t}}
\newcommand{\fu}{{\mathfrak u}}
\newcommand{\ga}{\alpha}
\newcommand{\gl}{\lambda}
 \newcommand{\cb}{\mathcal{B}}
\newcommand{\cc}{\mathcal{C}}
\newcommand{\ce}{\mathcal{E}}
\newcommand{\ch}{\mathcal{H}}
 \newcommand{\ci}{\mathcal{I}}
 \newcommand{\cl}{\mathcal{L}}
 \newcommand{\cm}{\mathcal{M}}
 \newcommand{\cn}{\mathcal{N}}
 \newcommand{\co}{\mathcal{O}}
 \newcommand{\cp}{\mathcal{P}}
 \newcommand{\cq}{\mathcal{Q}}
 \newcommand{\cv}{\mathcal{V}}
 \newcommand{\cw}{\mathcal{W}}
 \newcommand{\cy}{\mathcal{Y}}
\newcommand{\toric}{\mathcal{V}}
\newcommand{\map}{\eta}
\DeclareMathOperator{\inv}{inv}
\newcommand{\RST}{{\tt RST}}
\newcommand{\IRST}{{\tt IRST}}
\newcommand{\std}{{\tt std}}
\newcommand{\pairs}{\operatorname{pairs}}
\renewcommand{\mod}{\;\mathrm{mod}\,}
\newcommand{\vb}{\mathbf{b}}
\newcommand{\shq}{\underline{\C}}
\renewcommand{\tilde}{\widetilde}
\renewcommand{\bar}[1]{\overline{#1}}
\renewcommand{\hat}{\widehat}
\def\va{\mathbf{a}}
\def\vb{\mathbf{b}}
\def\v0{\mathbf{0}}
\newcommand{\mn}{\mathsf{n}}
\newcommand{\ms}{\mathsf{s}}
\newcommand{\mt}{\mathsf{t}}
\newcommand{\mx}{\mathsf{x}}
\newcommand{\spr}[1]{\widetilde{\cn}_{#1}}
\newcommand{\espr}[1]{\widetilde{\cm}_{#1}}
\newcounter{myenumi}
\renewcommand{\themyenumi}{$(\arabic{myenumi})$}
\title[Extended Springer Fibers]{Geometric and Combinatorial Properties of Extended Springer Fibers}
\author{William Graham}
\address{Department of Mathematics\\ University of Georgia\\ Boyd Research and Education Center\\ Athens, GA\\ 30602\\ USA
}
\email{wag@math.uga.edu}
\author{Martha Precup}
\address{Department of Mathematics\\ Washington University in St. Louis \\ One Brookings Drive\\ St. Louis, Missouri\\ 63130\\ USA  }
\email{martha.precup@wustl.edu}
\author{Amber Russell}
\address{Department of Mathematics, Statistics, and Actuarial Science,
Butler University, 
4600 Sunset Avenue, Indianapolis, Indiana 46208\\ USA}
\email{acrusse3@butler.edu}
\date{\today}
\begin{document}
\parskip=4pt \baselineskip=14pt

\begin{abstract} We consider a generalization of the Springer resolution studied in earlier work of the authors, called the extended Springer resolution.  In type $A$, this map plays a role in Lusztig's generalized Springer correspondence comparable to that of the Springer resolution in the Springer correspondence.  The fibers of the Springer resolution play a key part in the latter story, and connect the combinatorics of tableaux to geometry.  Our main results prove the same is true for fibers of the extended Springer resolution-- their geometry is governed by the combinatorics of tableaux.  In particular, we prove that these fibers are paved by affines, up to the action of a finite group, and give combinatorial formulas for their Betti numbers.  This yields, among other things, a simple formula for dimensions of stalks of the Lusztig sheaves arising in the study of the generalized Springer correspondence, and shows that there is a close
resemblance between each Lusztig sheaf and the Springer sheaf for a smaller group.
\end{abstract}

\maketitle

\tableofcontents

\section{Introduction}\label{sec.intro}
The Springer resolution $\mu: \tilde{\cn} \to \cn$ is a desingularization of the nilpotent cone $\cn$ in the  Lie algebra $\fg$ of a semisimple algebraic group $G$.  This resolution plays a prominent role in geometric representation theory, arising
from Springer's striking discovery that there is a deep connection, called the Springer correspondence, between Weyl group
representation and $G$-orbits on the nilpotent cone (see \cite{Springer1976}).  In particular, there is a graded Weyl group representation
on the cohomology of the fibers of this resolution, which are called Springer fibers.
In the type $A$ case---that is, when $\fg=\mathfrak{sl}_n(\C)$ is the Lie algebra of the special linear group $SL_n(\C)$,  with corresponding Weyl group $S_n$---the Springer fibers are parametrized by partitions of $n$. The number of irreducible components of the Springer fiber corresponding a partition $\lambda$ is equal to the number of standard tableaux of shape $\lambda$, and its Poincar\'e polynomial is the generating function for a particular inversion statistic on the set of all row-strict tableaux of shape 
$\lambda$; more details appear in Section~\ref{sec.paving} below.  In this way, Springer fibers embody the interaction between the combinatorics used to study $S_n$-representations, and the geometry of the Springer resolution.

This paper studies extended Springer fibers, which are fibers of the extended Springer resolution 
\[
\psi: \tilde{\cm} \to \tilde{\cn} \to \cn
\]
introduced by the first author in~\cite{Graham2019}.  In~\cite{GPR} the authors show that in type $A$, this map plays a role in Lusztig's generalized Springer correspondence comparable to that of the Springer resolution in the Springer correspondence.  In other words, the paper~\cite{GPR} opens the door to studying interactions between the combinatorics of tableaux and the geometry of the generalized Springer correspondence, via the extended Springer fibers. Our work below initiates this area of study.   

To state our main results we briefly introduce some notation; references and more information can be found in Section~\ref{sec.notation}.   The nilpotent cone in $\mathfrak{sl}_n(\C)$ has a stratification by nilpotent orbits, each equal to a conjugacy class in $\mathfrak{sl}_n(\C)$ of nilpotent matrices of Jordan type $\lambda$, where $\lambda$ is a partition of $n$. Given $\mx$ in the orbit $\co_\lambda$, let $\spr{\mx}:= \mu^{-1}(\mx)$ denote the corresponding Springer fiber.  Spaltenstein proved that Springer fibers are paved by affines~\cite{Spaltenstein1977}, and the cells in the affine paving are in bijection with row-strict tableaux of shape $\lambda$.  Using a particular paving constructed by the second author and Ji in~\cite{Ji-Precup2022}, we lift each cell in the affine paving of the Springer fiber to the extended Springer fiber $\espr{\mx}:=\psi^{-1}(\mx)$, and analyze the result.

The lifts of the cells are obtained in the following way.
The variety $\tilde{\cm}$ is constructed from $\tilde{\cn}$ using the affine toric variety $\toric$ corresponding to the character group of a fixed maximal torus $T \subseteq SL_n(\C)$ and the cone of $\R_{\geq 0}$-linear combinations of simple roots; see Section~\ref{sec.extended} below.   The construction implies that $\tilde{\cm}$ carries an action of the center $Z$ of $G$.
Our investigation of pavings of extended Springer fibers begins with a study of particular subvarieties of $\toric$ and their images under the $Z$-action.  There is a natural quotient map $\espr{\mx} \to \spr{\mx} = \espr{\mx}/Z$.  Using our results about the action of
$Z$ on $\toric$, we prove that 
that the inverse image of each affine cell in the paving of the Springer fiber is a disjoint union of varieties, each of which
is isomorphic to affine space modulo a finite group.  These varieties form the cells in what we refer to as an orbifold paving.  
To keep track of each constituent of the union, we introduce a new numeric measure for row-strict tableaux, called divisors; see~Definition~\ref{def.divisor}.  The key idea is to break each tableau into equal sized blocks labeled by consecutive numbers---it is this combinatorial structure that dictates the $Z$-action on the inverse image of the corresponding affine cell to $\tilde{\cm}_\mx$.
We can now state our main result.

\begin{THM}[See Theorem~\ref{thm.extended-paving} below] Suppose $\mx\in \co_\lambda$.  The extended Springer fiber $\espr{\mx}$ has an orbifold paving with cells indexed by pairs $(\sigma, i)$ such that $\sigma$ is a row-strict tableau of shape $\lambda$ and $0\leq i \leq d_\sigma-1$ with $d_\sigma$ the maximal divisor of $\sigma$.  Furthermore, the action of $Z$ on $\tilde{\cm}_\mx$ cyclically permutes these cells.
\end{THM}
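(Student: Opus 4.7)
The plan is to lift the affine paving of the Springer fiber $\spr{\mx}$ (in the form constructed by Ji--Precup) to an orbifold paving of $\espr{\mx}$ by pulling back along the quotient map $\espr{\mx} \to \spr{\mx} = \espr{\mx}/Z$, and then to analyze the resulting decomposition cell by cell using the toric geometry of $\toric$ together with the $Z$-action inherited from it. Fix a row-strict tableau $\sigma$ of shape $\lambda$ and let $C_\sigma \subseteq \spr{\mx}$ be the corresponding affine cell. The first step is to describe $\psi^{-1}(\mx) \cap \pi^{-1}(C_\sigma)$ purely in terms of the relevant subvariety of $\toric$: because $\tilde{\cm}$ is built from $\tilde{\cn}$ using $\toric$ (the affine toric variety attached to the cone of nonnegative linear combinations of simple roots), the fiber above a point of $C_\sigma$ should be canonically identified with a locally closed piece of $\toric$ on which $Z$ acts, and this piece should not vary as the point moves in the affine cell.

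The second step is to unpack the $Z$-action on this toric slice and relate it to the combinatorics of $\sigma$. Here is where the notion of divisor $d_\sigma$ enters: breaking $\sigma$ into equal sized blocks labeled by consecutive integers determines a subgroup of $Z$ that acts trivially on the generic point of the slice, and the quotient by this subgroup acts freely on the complement of a smaller locus. I would prove that the toric slice, as a $Z$-variety, decomposes into $d_\sigma$ locally closed pieces, each of which is a free $(Z/\text{stabilizer})$-torsor over an affine space, with $Z$ cyclically permuting them. Combined with the affine structure on $C_\sigma$ itself, this realizes $\psi^{-1}(\mx) \cap \pi^{-1}(C_\sigma)$ as a disjoint union of $d_\sigma$ varieties, each isomorphic to an affine space modulo a finite subgroup of $Z$. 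Indexing these pieces by $i \in \{0, 1, \dots, d_\sigma - 1\}$ produces the pairs $(\sigma, i)$ of the statement, and the cyclic action of $Z$ on the set of indices $i$ is immediate from the construction.

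The final step is to assemble these local descriptions into a genuine orbifold paving of $\espr{\mx}$. I would order the cells compatibly with Spaltenstein's closure order on $\{C_\sigma\}$, using the same ordering on each fiber of $\pi$ over a cell, so that closure of any cell $(\sigma, i)$ is contained in the union of cells $(\sigma', i')$ with $\sigma'$ below $\sigma$. Since the $Z$-action is compatible with $\pi$ and permutes the cells in each fiber cyclically, the closure condition for orbifold pavings follows from the closure condition on the Springer side, which is already known.

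The main obstacle I anticipate is the second step: precisely identifying the $Z$-stabilizer of a generic point of the toric slice in terms of the block structure of $\sigma$, and showing that the resulting decomposition is genuinely into affine pieces rather than something more complicated. This requires a careful matching between (i) the explicit coordinates on $\toric$ coming from characters of $T$ and the simple root cone, (ii) the affine coordinates on $C_\sigma$ coming from the Ji--Precup construction, and (iii) the combinatorial recipe defining $d_\sigma$. Once this identification is made the orbifold structure and the cyclic $Z$-action fall out, but getting the dictionary right is where all the work lies.
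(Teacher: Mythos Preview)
Your overall strategy matches the paper's: pull back the Ji--Precup cells along $\tilde{\eta}: \espr{\mx} \to \spr{\mx}$, identify the preimage of each $\cc_\sigma$ using the toric variety $\toric$, and show it decomposes into $d_\sigma$ pieces cyclically permuted by $Z$. You are also right that the block structure of $\sigma$ determines the count $d_\sigma$ (this is Lemma~\ref{lem.maxdiv}), and that the hard part is step two.

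There is, however, a real gap in that step. You assert that each of the $d_\sigma$ pieces is ``an affine space modulo a finite subgroup of $Z$,'' with the orbifold structure coming from the $Z$-stabilizer of a generic point of the toric slice. This is not correct. The variety $\toric$ is itself singular, and the components $\cw_r$ of $\pi^{-1}(\cw_{\sigma,ad})$ are singular closed subvarieties of it. The stabilizer in $Z$ of a component $\cw_r$ (a cyclic group of order $n/d_\sigma$) acts on $\cw_r$ with quotient $\cw_{\sigma,ad} \cong \C^{|I_\sigma|}$, but that map goes the wrong way: it exhibits affine space as a quotient of $\cw_r$, not $\cw_r$ as a quotient of affine space. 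No subgroup of $Z$ accomplishes the latter, and a generic finite cyclic cover of $\C^{|I_\sigma|}$ need not be an orbifold cell at all.

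The paper resolves this by introducing a \emph{further} cover $\hat{\toric} \to \toric$ with $\hat{\toric} \cong \C^{n-1}$, corresponding to the refined lattice $\frac{1}{n}\cq \supset \cp$. The covering group $H = \ker(\hat{T} \to T)$ has order $n^{n-2}$, much larger than $|Z| = n$. One then proves (Proposition~\ref{prop.quotient}, via an explicit analysis of $H$-invariant monomials in Propositions~\ref{prop.invariantrings} and~\ref{prop.invariantsum}) that each $\cw_r$ is isomorphic to $\C^{|I_\sigma|}/H_J$, where $H_J \subset H$ is the stabilizer of an affine lift $\hat{\cw} \subset \hat{\toric}$. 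This is the missing ingredient: without the auxiliary lattice $\frac{1}{n}\cq$ and the group $H$, you have no smooth model sitting above $\cw_r$ and hence no way to verify it is an orbifold cell. Your closing instinct that ``getting the dictionary right is where all the work lies'' is accurate, but the dictionary needs one more layer than you have drawn.
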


This paving allows us to compute topological invariants of the extended Springer fibers using 
tableaux, as in the case of ordinary Springer fibers.
We obtain a number of consequences: formulas for the Poincar\'e polynomial of each $Z$-isotypic component of the extended Springer fiber (Theorem~\ref{thm.poincare}), for the Poincar\'e polynomial of each extended Springer fiber in terms of Poincar\'e polynomials of smaller rank Springer fibers (Corollary~\ref{cor.poincare}), and for the dimensions of the stalks of the Lusztig sheaves, which play an important role in Lusztig's generalized Springer correspondence (Theorem~\ref{thm.lusztigsheaf}).  
As a consequence, we show that there is a remarkable resemblance between each Lusztig sheaf and the Springer sheaf for a smaller group (see Corollary \ref{cor:smallergroup}).  Although this resemblance can be seen in examples computed using the Lusztig-Shoji algorithm (\cite{Lu86}, \cite{Sho87}), we are not aware of any published reference.  

Several of our combinatorial formulas below rely on the fact that divisors of row-strict tableaux interact well with an inversion statistic defined by Tymoczko for computing the Betti numbers of Springer fibers~\cite{Tymoczko2006, Precup-Tymoczko}.  Indeed, Corollary~\ref{cor.shift} below gives an inductive formula for the number of inversions of a row-strict tableau of size $n$ with divisor $d$ in terms of the number of inversions of a row-strict tableau of size $n/d$.  Although this result is purely combinatorial, it is suggestive of a geometric phenomenon, and has the potential to lend new insight into the geometry of Springer fibers, which we plan
to explore in future work.  Also, the existence of a paving of the extended Springer fibers suggests potential connections with parity sheaves \cite{JMW14}, which have been instrumental in recent work on the modular Springer correspondence \cite{AHJR1}.

We now summarize the contents of the paper.  Section~\ref{sec.notation} covers background information and definitions, including the definitions of the extended Springer resolution and of orbifold pavings. In Section~\ref{sec.affine-subvarieties}, we study subvarieties of the affine toric variety $\cv$, each indexed by a partition of the set $[n-1]$, and their image under the $Z$-action.  In Section~\ref{sec.paving}, we apply the results of Section~\ref{sec.affine-subvarieties} in the context of extended Springer fibers and prove our main paving theorem, Theorem~\ref{thm.extended-paving}.  Finally, Section~\ref{sec.combinatorics} explores the combinatorial consequences of the paving.

\
\noindent\textbf{Acknowledgments.} The second author is partially supported by NSF grant DMS 1954001.


\section{Background and notation}\label{sec.notation}

Throughout this paper, $G$ denotes the simply connected, semisimple algebraic group $SL_n(\C)$ with Lie algebra $\fg=\mathfrak{sl}_n(\C)$ and center $Z\simeq \Z_n$.  We fix $B \subseteq G$ to be the Borel subgroup of upper triangular matrices.  Then $B = TU$, where $U$ is the subgroup of upper 
triangular matrices with diagonal entries equal to $1$, and $T$ is the subgroup of diagonal matrices.  
Denoting the Lie algebras of 
of $B$, $T$ and $U$ by $\fb$, $\ft$, and $\fu$, respectively, we have $\fb=\ft\oplus \fu$.  The Weyl group of $G$ is 
$W= N_G(T)/T\cong S_n$, the symmetric group on $n$ letters.  Given $g \in G$ and $\mx \in \fg$, 
we write $g \cdot \mx = \Ad(g) \mx$ for the adjoint action of $g$ on $\mx$; in terms of matrix multiplication,
$g \cdot \mx = g \mx g^{-1}$.  Let $\cn$ denote the nilpotent cone of~$\fg$.

Let $E_{i,j}$ denote the elementary matrix whose only nonzero entry is  $1$ in position $(i,j)$.  Then
$\fu=\C\{ E_{i,j} \mid 1\leq i<j \leq n \}$.  We define $\epsilon_i\in \ft^*$ by $\epsilon_i(\mt) = \mt_{ii}$,
where $\mt = \sum \mt_{ii} E_{i,i} \in \ft$.  Let $\Phi$ denote
the set of roots of $\ft$ on $\fg$, and $\Phi^+ = \{ \epsilon_i - \epsilon_j \mid i < j \}$ denote
the set of positive roots.  The corresponding set of simple roots is 
$\Delta= \{\alpha_i \mid i\in \{1, \ldots, n-1\}\}$, where $\alpha_i:=\epsilon_i-\epsilon_{i+1}$.  

The center $Z$ of $G$ is isomorphic to $\Z_n$; it consists of scalar multiples of the $n \times n$ identity matrix $I_n$.
Let $\widehat{Z}$ denote the character group of $Z$, and let
$\chi_i \in \widehat{Z}$ be the character of $Z$ defined by $\chi_i(\diag(a,a,\ldots, a)) = a^i$.


\subsection{The Springer resolution} \label{sec.Springer.def} The \textit{Springer resolution}  is the morphism $\mu: \tilde{\cn} \to \cn$, where
\[
\tilde{\cn} := \{(gB, \mx) \in G/B \times \cn \mid g^{-1}\cdot \mx \in \fu \},
\]
and the map $\mu$ is the projection onto the second factor. 
We will make use of the following alternate description of the Springer resolution.  Write $G\times^B \fu$ for the mixed space $(G \times \fu)/B$, where
$B$ acts on $G\times \fu$ on the right by $(g,\mx)b = (gb, b^{-1} \cdot \mx)$.  Let $[g,\mx] := (g,\mx) B \in G\times^B \fu$.
There is an isomorphism of varieties
\[
G\times^B \fu \to \tilde{\cn},\; \;[g,\mx] \mapsto (gB, g\cdot \mx),
\]
(cf.~\cite[pg.~66]{Jantzen}),
which we use to identify these varieties.  Under this identification, the morphism $\mu$ is 
\begin{eqnarray}\label{eqn.Sp.res}
\mu: G\times^B \fu \to \cn, \; \; [g,\mx] \mapsto g\cdot \mx.
\end{eqnarray}

The Springer fiber corresponding to a nilpotent matrix
$\mx\in \cn$ is the fiber $\mu^{-1}(\mx)$ of the Springer resolution $\mu: G \times^B \fu \to {\mathcal N}$,
$$
\spr{\mx}:=\mu^{-1}(\mx) = \{ [g, g^{-1} \cdot \mx ] \mid g^{-1} \cdot \mx \in \fu \}.
$$
The natural map $G \times^B \fu \to G/B$ maps $\spr{\mx}$ isomorphically onto its image in the flag variety, which
is the subvariety
$\cb_{\mx}$ of $\cb = G/B$ defined by 
$$
\cb_{\mx} = \{ gB \mid g^{-1} \cdot \mx \in \fu \}.
$$
We identify $\spr{\mx}$ with $\cb_{\mx}$, generally writing $\spr{\mx}$ throughout.  

Let $\lambda$ be a partition of $n$.  We denote by $\co_\lambda$ the nilpotent orbit of matrices of Jordan type $\lambda$. 
When $\mx$ is a fixed element of $\co_\lambda$ we write $\spr{\gl}:= \spr{\mx}$.  The Springer fiber
$\spr{\gl}$ is equidimensional of dimension $\sum_i (\gl_i - 1)$, where the $\gl_i$ are the parts of $\gl$.
A partition is divisible by $d$ if all of its parts are divisible by $d$; in this case, we define $\lambda/d$ to be
the partition whose parts are $\gl_i/d$.  Then $\spr{\gl/d}$ denotes the Springer fiber for the group $SL_{n/d}(\C)$
over a fixed nilpotent element of Jordan type $\gl/d$.


\subsection{The extended Springer resolution} \label{sec.extended}  We now recall from~\cite{Graham2019, GPR} the definition of the variety which is the main geometric focus of this paper.   More details may be found in these references.

The nilpotent matrix $\mn:=\sum_{i=1}^{n-1} E_{i,i+1}\in \fu$ is a regular nilpotent element of $\fg$.  
We can find a standard triple in $\fg$ with nilpositive element $\mn$ and semisimple element $\ms$ such that
$[\ms, E_{i,i+1}] = 2 E_{i,i+1}$.  Writing $\fg_2$ for the space spanned by the $E_{i,i+1}$,
we see that $[\ms, \mx] = 2\mx$ for all $\mx\in \fg_2$.

Let $\omega := \exp(2\pi i/n)$, and let $\underline{\omega} = \omega I_n$, where $I_n$ is the $n \times n$ identity matrix.
The center $Z$ of $G$ is the subgroup of $T$ given by $Z = \{\underline{\omega}^k  \mid 0\leq k \leq n-1\}$.  Let $T_{ad}$ denote the torus $T/Z$.
Since the center $Z$ acts trivially on $\fg$, the action of $T$ on $\fg$ factors through
the map $T \to T_{ad} = T/Z$.   The map $T_{ad} \to T_{ad} \cdot \mathsf{n}$ given by $t \mapsto t \cdot \mathsf{n}$ is
a $T$-equivariant open embedding of
$T_{ad}$ in $\fg_2$ as an open subset.  Hence,
 $\fg_2$ is an affine toric variety for $T_{ad}$; to emphasize this fact, we 
 define $\toric_{ad}:=\fg_2$.
The composition $\toric_{ad} \to \fu \to \fu/[\fu, \fu]$ is an isomorphism, and using this,
we identify $\toric_{ad}$ with $\fu/[\fu,\fu]$.  Via this identification, $\toric_{ad}$
acquires a $B$-action (the subgroup $U$ acts trivially), and the projection
$p: \fu \to \fu/[\fu,\fu]=\toric_{ad}$ is $B$-equivariant.

An affine toric variety is characterized by the character group of the torus,
which can be viewed as a subset of the dual Lie algebra of the torus, together with a cone in the real span of 
the character group.  Note that the simple roots can be viewed
as characters of either $T_{ad}$ or $T$.   Let $\cp\subset \ft^*$ denote the weight lattice, that is, the
character group of $T$, and let $\cq\subset \ft^*$ denote the root lattice, the character group
of $T_{ad}$.  Both have the same real span $\V$ in $\ft^*$, and each is a lattice in $\V$.  
For later use, we note that
the lattice $\frac{1}{n} \cq$ in $\V$ is the character group of a torus which we denote by~$\hat{T}$.  

The toric variety $\toric_{ad}$ corresponds
to the lattice $\cq$, and the cone equal to the
set of $\R_{\geq 0}$-linear combinations of simple roots.   We define $\toric$ to be the toric variety for $T$ 
obtained by using the lattice $\cp$ in place of $\cq$, but keeping the same cone.
It follows from \cite[Section 2.2]{Fulton}
that $\toric/Z = \toric_{ad}$ (see
\cite[Prop.~3.1]{Graham2019}).  We use the natural projection $B \to T \cong B/U$ to extend
the $T$-action on $\toric$ to a $B$-action, where
the subgroup $U$ acts trivially.  The quotient map $\pi:\toric
\to \toric_{ad}$ is $B$-equivariant.  For later use, let $\widehat{\toric}$ denote the toric
variety for $\hat{T}$ defined using the same cone, but the lattice~$\frac{1}{n} \cq$.

The variety $\tilde{\cm}$ and the map $\psi: \tilde{\cm} \to \cn$ discussed in the introduction are defined as follows.  First, consider the maps $p: \fu \to \fu/[\fu,\fu]=\toric_{ad}$ and $\pi: \toric \to \toric/Z=\toric_{ad}$ defined above.  Set
\[
\tilde{\fu} := \toric\times_{\toric_{ad}} \fu =\{ (v, \mathsf{x})\mid \pi(v)=p(\mathsf{x}) \};
\] 
that is, we form the following Cartesian diagram:
\[
\xymatrix{\tilde{\fu} \ar[r] \ar[d] & \fu \ar[d]^p  \\ \toric  \ar[r]^{\pi}& \toric_{ad}. }
\]
Because the maps $p$ and $\pi$ are both $B$-equivariant, $B$ acts on $\tilde{\fu}$.  We define $\tilde{\cm}:=G\times^B\tilde{\fu}$.  Let $\map: \tilde{\fu} \to \fu$ denote  projection onto the second factor.  Define $\tilde{\map}: \tilde{\cm} \to \tilde{\cn}$ to be the map induced from $\map$, so $\tilde{\map}$ maps the element $[g,y]\in \tilde{\cm}$ to $[g, \map(y)]\in \tilde{\cn}$.  The \textit{extended Springer resolution} is the variety $\tilde{\cm}$, together with the map $\psi: \tilde{\cm}\to \cn$, where $\psi$ is the composition
\[
\xymatrix{ \psi: \tilde{\cm} \ar[r]^{\tilde{\map}} & \tilde{\cn} \ar[r]^{\mu} & \cn} 
\]
of $\tilde{\map}$ with the usual Springer resolution $\mu$.  The \textit{extended Springer fiber}
corresponding to $\mx\in \cn$ is the fiber $\espr{\mx} := \psi^{-1}(x)$.
As for Springer fibers, if $\mx$ is a fixed matrix of Jordan type $\lambda$, we write $\espr{\gl}:=\espr{\mx}$.


\subsection{Pavings} \label{sec.praving.def} The main result of this paper shows that each extended Springer fiber has an orbifold paving by affine cells.  We now explain this terminology.

Recall that a \textit{paving} of an algebraic variety $\cy$ is a filtration by closed subvarieties
\[
\cy_0 \subset \cy_1 \subset \cdots \subset \cy_d=\cy.
\]
A paving is \textit{affine} if every $\cy_i\setminus \cy_{i-1}$ is isomorphic to a finite disjoint union of affine spaces.  We call these spaces the \textit{affine cells} of the paving.  We say a paving is an \textit{orbifold paving} if every $\cy_i\setminus \cy_{i-1}$ is isomorphic to a finite disjoint union of quotients of an affine space by a finite group action.

It is well-known that if an algebraic variety $\cy$ has a paving by affines, then the fundamental classes of the closures of the
cells in the paving form a basis for the Borel-Moore homology of $\cy$ over the integers (here denoted by $\overline{H}_*(\cy;\Z)$).  This result follows by induction, using
the long exact sequence in Borel-Moore homology, together with the fact that $\overline{H}_{2n}(\C^n) = \Z \cdot [\C^n]$,
and $\overline{H}_i(\C^n) = 0$ if $i \neq 2n$.   As shown by Abe and Matsumura \cite[Section 2.4]{Abe-Matsumura2015}, if rational
coefficients are used, this
result extends to orbifold
pavings: if $\cy$ has an orbifold paving, the fundamental classes of orbifold cell closures form a basis for the rational
Borel-Moore homology $\overline{H}_*(\cy;\QQ)$.
The reason is that if
$H$ is a finite group acting on $\C^n$, then $\overline{H}_{2n}(\C^n/H;\QQ) = \QQ \cdot [\C^n/H]$.  

With an additional hypothesis on the group action, these results can be extended to a more
general coefficient field.  (We do not know if this additional hypothesis is necessary.)
It is well known that if $\pi: \cy \to \cy/H$
is a quotient map by a finite group, then in ordinary homology, the map $\pi_*: H_i(\cy;\mathbb F) \to H_i(\cy/H;\mathbb F)$ is an isomorphism, if
coefficients are taken in a field $\mathbb F$ whose characteristic is $0$ or is relatively prime to $|H|$.  For lack of reference,
we prove the following lemma, which is an analogue of this result in Borel-Moore homology.

\begin{Lem}
Let $H$ be a finite abelian group acting linearly on $\C^n$.  If  $\mathbb F$ is a field whose characteristic is $0$ or is relatively
prime to $|H|$, then $\overline{H}_{2n}(\C^n/H;\mathbb F) = \mathbb F \cdot [\C^n/H]$,
and $\overline{H}_i(\C^n/H;\mathbb F) = 0$ if $i \neq 2n$.
\end{Lem}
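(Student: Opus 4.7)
The plan is to use the standard transfer argument for quotients by a finite group whose order is invertible in the coefficient field, to identify $\overline{H}_*(\C^n/H; \mathbb F)$ with the $H$-invariants in $\overline{H}_*(\C^n; \mathbb F)$, and then to observe that the $H$-action on the latter is trivial.

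First, I would construct a transfer map $\tau: \overline{H}_*(\C^n/H; \mathbb F) \to \overline{H}_*(\C^n; \mathbb F)$ satisfying $\pi_* \tau = |H| \cdot \mathrm{id}$ and $\tau \pi_* = \sum_{h \in H} h_*$, where $\pi: \C^n \to \C^n/H$ is the quotient map. Such a map exists because $\pi$ is a finite proper morphism; one efficient way to produce it is via the canonical identification of the $H$-invariants of $\pi_* \underline{\mathbb F}_{\C^n}$ with the constant sheaf $\underline{\mathbb F}_{\C^n/H}$, using that $\mathbb F[H]$ is semisimple under our hypothesis on $\mathrm{char}(\mathbb F)$. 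Since $|H|$ is invertible in $\mathbb F$, the two identities show that $\frac{1}{|H|}\sum_h h_*$ is a projector onto the $H$-invariants and that $\pi_*$ induces an isomorphism
\[
\overline{H}_*(\C^n; \mathbb F)^H \xrightarrow{\;\sim\;} \overline{H}_*(\C^n/H; \mathbb F),
\]
with inverse $\frac{1}{|H|} \tau$.

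Next, I would compute the right-hand side. By Poincar\'e duality on the smooth variety $\C^n$, $\overline{H}_i(\C^n; \mathbb F) \cong H^{2n-i}(\C^n; \mathbb F)$, which equals $\mathbb F$ in degree $i = 2n$ (generated by the fundamental class $[\C^n]$) and is zero otherwise. The action of $H$ on $\overline{H}_{2n}(\C^n; \mathbb F)$ is trivial, since each $h \in H$ acts by a complex linear automorphism of $\C^n$ and hence preserves the complex orientation. Combining this with the transfer isomorphism yields $\overline{H}_i(\C^n/H; \mathbb F) = 0$ for $i \neq 2n$ and $\overline{H}_{2n}(\C^n/H; \mathbb F) = \mathbb F \cdot \pi_*[\C^n]$. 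Finally, because $\pi$ is a degree-$|H|$ map onto the irreducible variety $\C^n/H$, one has $\pi_*[\C^n] = |H| \cdot [\C^n/H]$, which generates the one-dimensional $\mathbb F$-vector space since $|H|$ is invertible.

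The main obstacle is setting up the transfer cleanly in Borel-Moore homology, specifically verifying the two identities $\pi_* \tau = |H| \cdot \mathrm{id}$ and $\tau \pi_* = \sum_h h_*$ for a genuinely ramified quotient rather than a free action or \'etale cover. This is a standard result, accessible either through the sheaf-theoretic route sketched above or by passing to compactly supported cohomology, averaging, and dualizing; once it is in hand, the rest of the proof is a direct application of Poincar\'e duality together with the observation that complex linear automorphisms are orientation-preserving.
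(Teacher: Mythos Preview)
Your argument is correct and takes a genuinely different route from the paper. The paper avoids setting up a transfer map in Borel--Moore homology directly: instead it compactifies, embedding $\C^n$ as the open complement of $\PP^{n-1}$ in $\PP^n$ (with $H$ acting diagonally and trivially on the extra coordinate), and then runs the long exact sequence for the pair together with the five-lemma. The point is that on the compact pieces $\PP^n$ and $\PP^{n-1}$, Borel--Moore homology coincides with ordinary homology, where the isomorphism $H_*(\cy;\mathbb F)^H \cong H_*(\cy/H;\mathbb F)$ is textbook; since $H$ acts trivially on $H_*(\PP^k;\mathbb F)$, the four outer vertical maps are isomorphisms and the five-lemma forces the middle one to be as well. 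Your approach is more direct and conceptual---no compactification, no five-lemma---but, as you note, it shifts the work to establishing the transfer identities $\pi_*\tau = |H|\cdot\mathrm{id}$ and $\tau\pi_* = \sum_h h_*$ in Borel--Moore homology for a possibly ramified quotient. That step is indeed standard via the sheaf-theoretic route you sketch (identifying $\underline{\mathbb F}_{\C^n/H}$ with the $H$-invariants in $\pi_*\underline{\mathbb F}_{\C^n}$ and using semisimplicity of $\mathbb F[H]$), so the only trade-off is which piece of machinery one prefers to import.
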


\begin{proof}
For simplicity, we omit the coefficient field $\mathbb F$ from the notation and write simply
$\overline{H}_i(\cy)$ for $\overline{H}_i(\cy;\mathbb F)$.  If $H$ acts on $\cy$, 
write $\tilde{\cy} = \cy/H$. 

Let $\pi: \C^n \to \tilde{\C^n}$ be the quotient map.
It suffices to prove that for all $i$, the map $\pi_*: \overline{H}_{i}(\C^n) \to  \overline{H}_{i}(  \tilde{\C}^n) $
is an isomorphism.
We may assume $H$ acts diagonally on $\C^n$.  Embed $\C^n \hookrightarrow \C^{n+1}$ via $x \mapsto (x,0)$, and 
extend the $H$-action to  $\C^{n+1}$ by making $H$ act trivially on the last factor.  Then we have
$$
\C^n \hookrightarrow \PP^n \hookleftarrow \PP^{n-1},
$$
where the horizontal maps are an open and closed embedding, respectively.  These maps induce embeddings
$$
\tilde{\C}^n \hookrightarrow \tilde{\PP}^n \hookleftarrow \tilde{\PP}^{n-1}.
$$
We have the following map of exact sequences:
$$
\begin{CD}
 \overline{H}_{i}(\PP^{n-1}) @>>> \overline{H}_{i}(\PP^{n}) @>>> \overline{H}_{i}(\C^n) @>>> \overline{H}_{i-1}(\PP^{n-1}) @>>> \overline{H}_{i-1}(\PP^{n}) \\
 @VVV @VVV @VVV  @VVV  @VVV \\
  \overline{H}_{i}(  \tilde{\PP}^{n-1}) @>>> \overline{H}_{i}(  \tilde{\PP}^{n} ) @>>> \overline{H}_{i}(  \tilde{\C}^n) 
  @>>> \tilde{H}_{i-1}(  \tilde{\PP}^{n-1}) @>>> \overline{H}_{i-1}(  \tilde{\PP}^{n}).
\end{CD}
$$
The vertical maps are induced by the quotient map.  For compact varieties, Borel-Moore homology is isomorphic
to ordinary homology, on which the quotient map induces isomorphism.  Because $\PP^n$ and $\PP^{n-1}$ are compact,
this observation implies
that all maps except the middle map are an isomorphism.  The five-lemma then implies that the middle map is
an isomorphism, as desired.
\end{proof}


\section{Affine toric varieties and quotients}  \label{sec.affine-subvarieties}

The toric varieties $\toric$ and $\toric_{ad}$ defined in the previous section play a key role in the study of extended Springer fibers.  In this section, we define and study particular subvarieties of $\toric$ associated to a given set partition.  We prove that each such subvariety is isomorphic to a quotient of affine space by a finite group action.  The
center $Z$ acts on the set of such subvarieties, and we identify the orbit of each under the action of $Z$. Our work in
this section lays the groundwork needed for Section~\ref{sec.paving}, where we lift the affine cells paving a Springer fiber to the extended Springer fibers.

\subsection{The toric varieties $\cv$, $\cv_{ad}$, and $\widehat{\cv}$}
The inclusions $\cq \subset \cp \subset \frac{1}{n} \cq$ of lattices correspond to inclusions 
\begin{equation} \label{e.inclusions}
\C[T_{ad}] \subset \C[T] \subset \C[\hat{T}],
\end{equation}
of coordinate rings, which in turn yield
surjections
\begin{equation} \label{e.torimaps}
\begin{tikzcd} 
\widehat T \arrow[rr,bend left,"\widehat \rho"] \arrow[r] \arrow[r, "\rho"] & T \arrow[r,"\pi"] & T_{ad} .
\end{tikzcd}
\end{equation}

We now introduce notation for functions on these tori.
Given a character $\gl$ of either $T$ or $T_{ad}$, let 
$e^{\gl}$ denote the corresponding
function on the torus.
Recall that $\Delta = \{ \ga_1, \ldots, \ga_{n-1} \}$ denotes the set of simple roots.  Let $\{ \gl_1, \ldots, \gl_{n-1} \}$ denote
the corresponding set of fundamental weights.  Write
$$
x_i = e^{\alpha_i}, \ \ y_i = e^{\lambda_i}, \ \ z_i = e^{\alpha_i/n}.
$$
Then $\C[T_{ad}] =  \C[x_1^{\pm 1},\dots, x_{n-1}^{\pm 1}]$, $\C[T] = \C[y_1^{\pm 1},\dots, y_{n-1}^{\pm 1}]$,
and $\C[\hat{T}] = \C[z_1^{\pm 1},\dots, z_{n-1}^{\pm 1}]$.  

We have $x_i = z_i^n$.  Moreover, the equation
\begin{eqnarray} \label{e.glk}
\gl_k &= & \frac{1}{n} \Big( (n-k) \ga_1 + 2(n-k) \ga_2 + \cdots +
k(n-k) \ga_k  \\ 
\nonumber & & \quad\quad + k(n-k-1) \ga_{k+1} + k(n-k-2) \ga_{k+2} + \cdots + k \ga_{n-1} \Big).
\end{eqnarray}
implies that
\begin{equation} \label{e.yk}
y_k = z_1^{n-k}z_2^{2(n-k)}\dots z_k^{k(n-k)}z_{k+1}^{k(n-k-1)}z_{k+2}^{k(n-k-2)}\dots z_{n-1}^k.
\end{equation}
This gives a precise relationship between the coordinate functions on $T$ and $T_{ad}$ and those on~$\hat{T}$.  

The kernel of $\pi$ is equal to the center $Z$ of $G$.  Let $\widehat{H} := \ker \widehat{\rho} \supset H := \ker \rho$.
The equation $x_i = z_i^n$ implies the map $\widehat{\rho}$ can be described in coordinates as
$$
\widehat{\rho}(z_1,\dots,z_{n-1}) = (z_1^n, \ldots, z_{n-1}^n).
$$
Hence $\widehat{H} \cong \mathbb Z_n\times\cdots \times \mathbb Z_n$ (with $(n-1)$ copies of $\mathbb Z_n$).
Recall that $\omega$ denotes a primitive $n$-th root of unity.  Then any element $h \in \widehat{H}$ is of the form
$h = (\omega^{a_1},\dots,\omega^{a_{n-1}})$, where $a_i \in \Z$.

\begin{Prop} \label{prop.H}
The group $H$ is equal to
\begin{equation} \label{e.H}
\{ (\omega^{a_1},\dots,\omega^{a_{n-1}}) \in \widehat T \mid a_1 + 2a_2+\cdots +(n-1)a_{n-1} \equiv 0\mod n\}.
\end{equation}
\end{Prop}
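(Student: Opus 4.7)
The plan is to characterize $H = \ker\rho$ by converting the condition $h \in \ker\rho$ into an explicit congruence on the exponents $a_1, \ldots, a_{n-1}$. First, I would use duality: since $\rho \colon \widehat T \to T$ is dual to the lattice inclusion $\cp \subset \tfrac{1}{n}\cq$, an element $h \in \widehat T$ lies in $\ker\rho$ if and only if $e^{\gl}(h) = 1$ for every $\gl \in \cp$. Because $\cp$ is generated as a $\Z$-module by the fundamental weights $\gl_1, \ldots, \gl_{n-1}$, this reduces to the $n-1$ conditions $y_k(h) = 1$ for $k = 1, \ldots, n-1$.

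Next, I would substitute $h = (\omega^{a_1}, \ldots, \omega^{a_{n-1}})$ into formula~\eqref{e.yk} to obtain $y_k(h) = \omega^{c_k}$, where
$$c_k \;=\; \sum_{i=1}^{k} i(n-k)\, a_i \;+\; \sum_{i=k+1}^{n-1} k(n-i)\, a_i.$$
The key step is to reduce each coefficient modulo $n$: both $i(n-k)$ and $k(n-i)$ are congruent to $-ik \pmod n$, so
$$c_k \;\equiv\; -k\bigl(a_1 + 2a_2 + \cdots + (n-1)a_{n-1}\bigr) \;=\; -kS \pmod n,$$
where $S := a_1 + 2a_2 + \cdots + (n-1)a_{n-1}$. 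Consequently, the system $\{\,y_k(h) = 1\,\}_{k=1}^{n-1}$ translates into the system of congruences $\{\,kS \equiv 0 \pmod n\,\}_{k=1}^{n-1}$.

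Finally, I would observe that the $k=1$ case $S \equiv 0 \pmod n$ already forces all the remaining congruences, so the entire system collapses to the single condition appearing in~\eqref{e.H}. I do not expect a serious obstacle; the only mildly delicate piece is the modular simplification in the middle step, which is where the symmetric coefficient $-kS$ emerges from an otherwise asymmetric sum. As a consistency check, $|H| = |\widehat H|/|Z| = n^{n-2}$, which matches the number of solutions in $(\Z/n\Z)^{n-1}$ of the single nontrivial linear congruence $S \equiv 0 \pmod n$, so no further solutions are being missed.
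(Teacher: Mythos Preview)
Your proposal is correct and follows essentially the same route as the paper: both arguments evaluate each $y_k$ at $(\omega^{a_1},\ldots,\omega^{a_{n-1}})$ using \eqref{e.yk}, reduce the resulting exponents modulo $n$ to obtain the system $kS \equiv 0 \pmod n$ (with $S = \sum_i i a_i$), and then observe that the $k=1$ condition is equivalent to the full system. Your modular simplification $i(n-k)\equiv k(n-i)\equiv -ik\pmod n$ is exactly the step the paper takes when it passes from $N_k$ to $k(a_1+2a_2+\cdots+(n-1)a_{n-1})$, and your cardinality check is a pleasant extra sanity check not present in the paper.
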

\begin{proof}
From \eqref{e.yk}, we see that  $(\omega^{a_1},\dots,\omega^{a_{n-1}})\in H$ if and only if 
$\omega^{N_k} = 1$ for every $k$ such that $1\leq k\leq (n-1)$, where
\begin{eqnarray*}
N_k &=& a_1(n-k)+2a_2(n-k)+\cdots +ka_k(n-k)\\
&&\quad \quad \quad\quad +ka_{k+1}(n-k-1) +ka_{k+2}(n-k-2)+\cdots +ka_{n-1}.
\end{eqnarray*}
Equivalently, the tuple $(a_1, \ldots, a_{n-1})$ is such that the modular relation
$N_k \equiv 0 \mod n$ are satisfied for all $1\leq k\leq n-1$. This in turn is equivalent to the tuple satisfying the relations
$$
k a_1 + 2 k a_2 + \cdots + (n-1) k a_{n-1}  \equiv 0 \mod n
$$
for all $1\leq k\leq (n-1)$.  These relations are satisfied for all such $k$ if and only if they are satisfied for $k =1$,
which implies the result.
\end{proof}

We can identify $Z$ with $\widehat{H}/H$, and do so explicitly as follows.  First, as $Z = \{\underline{\omega}^k \mid 0\leq k \leq n-1\}$, we see $Z$ is
isomorphic to the cyclic group $\langle \omega \rangle \simeq \Z_n$.  By Proposition~\ref{prop.H} the homomorphism
\begin{equation} \label{e.hmap}
\widehat{H} \rightarrow Z,\;\; (\omega^{a_1}, \ldots, \omega^{a_{n-1}}) \mapsto \underline{\omega}^{\sum_{r=1}^{n-1} r a_r}
\end{equation}
has kernel $H$, and thus yields the desired isomorphism of $\widehat{H}/H$ with $Z$.

Recall from Section \ref{sec.extended} that the cone in $\V$ which is equal to
the $\R_{\geq 0}$-linear combinations of the positive simple roots defines an affine
toric variety for each of the tori $\widehat{T}$, $T$, and $T_{ad}$.  These toric varieties are denoted
by $\toric$, $\toric_{ad}$, and $\widehat{\toric}$, respectively.

As for the tori, the inclusions $\C[\toric_{ad}] \subset \C[\toric] \subset \C[\hat{\toric}]$
yield maps of toric varieties which extend the maps of the tori, and which
we denote by the same letters:
\begin{equation} \label{e.toricmaps}
\begin{tikzcd} 
\widehat \toric \arrow[rr,bend left,"\widehat \rho"] \arrow[r] \arrow[r, "\rho"] & \toric \arrow[r,"\pi"] & \toric_{ad}.
\end{tikzcd}
\end{equation}
The maps $\widehat{\rho}$, $\rho$, and $\pi$, are quotient maps for the actions of the finite
groups $\widehat{H}$, $H$, and $Z$, respectively.

To simplify notation, we write $A = \C[\widehat{\cv}]$. Then $\C[\cv] = A^H$, since $\cv \cong \hat\cv /H$.  
For each $i$ with $1\leq i \leq n-1$, let $\mu_i = \sum a_i \alpha_i$ denote the unique weight in the 
same coset of $\lambda_i$ modulo the root lattice
such that each $a_i$ satisfies $0 \leq a_i < 1$, and set $v_i:= e^{\mu_i} \in \C[\toric]$. 
The rings
$A=\C[\hat{\toric}]$ and $\C[\toric_{ad}]$ are the polynomial rings
$\C[z_1, \ldots, z_{n-1}]$ and $\C[x_1, \ldots, x_{n-1}]$, respectively.  Thus, both $\hat{\toric}$ and $\toric_{ad}$ are
isomorphic to $\C^{n-1}$, with coordinate functions $z_i$ and $x_i$, respectively.  However, the ring $\C[\toric]$
is more complicated: it is a quotient of the polynomial ring 
$\C[v_1, \ldots, v_{n-1}, x_1, \ldots, x_{n-1}]$, but it
is not a polynomial ring (except if $n =2$).


\subsection{Affine spaces mod finite groups} \label{ss.aff-mod-finite}
In this section, we define certain subvarieties of $\toric$, and show that each such subvariety is the quotient
of affine space by a finite group.  These subvarieties will play a role in our paving of the generalized Springer fibers.

Fix a decomposition of the set $[n-1]:=\{1, \ldots, n-1 \}$ into three disjoint (possibly empty) subsets $I$, $J$, $K$. 
Define $\cw_{ad}$ to be the subvariety of $\toric_{ad}$ defined 
by the equations $x_j = 1$, $x_k = 0$ for $j \in J$ and $k \in K$.  There is a natural isomorphism
\begin{eqnarray}\label{eqn.affine.iso}
i: \C^{|I|} \to \cw_{ad}.
\end{eqnarray}
If $J$ is nonempty, we fix a tuple of integers $(c_j)_{j\in J}$ such that $c_j\in \{0,\ldots, n-1\}$.
Define $\widehat{\cw}_{(c_j)}\simeq   \C^{|I|}$ to be the affine subvariety in 
$\hat\cv$ defined by the equations $z_j = \omega^{c_j}$, $z_k = 0$, for $j \in J$, $k \in K$, and
set $ \cw_{(c_j)} := \rho(\widehat{\cw}_{(c_j)})$.  
If the integers $c_j$ are fixed and there is no possible confusion, we write $\hat\cw := \hat\cw_{(c_j)}$ and
$\cw:= \rho(\hat\cw) \subset \toric$.  

Observe that  $\cw_{ad} = \hat\rho(\hat\cw) \subset \toric_{ad}$.   The varieties $\widehat{\cw}_{(c_j)}$ are all
distinct for different $(c_j)$, but this is not true for the $ \cw_{(c_j)}$; see Proposition \ref{p.components} for a precise statement.
However, the subgroup of $H$ preserving $ \cw_{(c_j)}$ depends only on the set $J$ and not on the particular values
$c_j$.  Indeed, if we define the subgroup $H_J$ of $H$ by the equation
$$
H_J = \{ (\omega^{a_1},\dots,\omega^{a_{n-1}}) \in \widehat T \mid a_j \equiv 0 \mod n\, \mbox { for } j \in J,\,  \sum_{r  \not\in J} r a_r \equiv 0 \mod n \},
$$
then we have the following proposition.

\begin{Prop} \label{prop.HW}
An element $h \in H$ preserves the subvariety $\widehat{\cw}$ if and only if $h \in H_J$.
\end{Prop}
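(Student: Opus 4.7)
The plan is to perform a direct computation in coordinates, using the description of $\widehat{\cv}$ as affine $(n-1)$-space with coordinates $z_1,\dots,z_{n-1}$ and the explicit action of $\widehat{T}$ on these coordinates.  Under the parametrization of $\widehat{H}$ introduced above, an element $h = (\omega^{a_1},\dots,\omega^{a_{n-1}}) \in \widehat{H}$ satisfies $z_i(h) = \omega^{a_i}$, so $h$ acts on a point $(z_1,\dots,z_{n-1}) \in \widehat{\cv}$ by scaling the $i$-th coordinate by $\omega^{a_i}$. Since $H \subset \widehat{H}$, the same formula describes the action of $h \in H$.

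With the action in hand, I would check the effect of $h$ on the three types of defining conditions of $\widehat{\cw} = \widehat{\cw}_{(c_j)}$.  The equations $z_k = 0$ for $k \in K$ are preserved automatically since multiplication by $\omega^{a_k}$ fixes $0$.  The free coordinates $z_i$ for $i \in I$ remain free after scaling by the nonzero factor $\omega^{a_i}$.  The essential condition comes from the equations $z_j = \omega^{c_j}$ for $j \in J$: after the action of $h$, these become $\omega^{a_j + c_j} = \omega^{c_j}$, which must hold for every point of $\widehat{\cw}$, independently of the free coordinates $z_i$ with $i \in I$.  This is equivalent to $a_j \equiv 0 \pmod{n}$ for every $j \in J$.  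Hence $h \cdot \widehat{\cw} \subseteq \widehat{\cw}$, and therefore $h \cdot \widehat{\cw} = \widehat{\cw}$ by equality of dimensions, precisely when this congruence holds for all $j \in J$.

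To conclude, I would combine the criterion just obtained with the description of $H$ inside $\widehat{H}$ provided by Proposition~\ref{prop.H}: $h \in H$ exactly when $\sum_{r=1}^{n-1} r a_r \equiv 0 \pmod{n}$.  Under the assumption that $a_j \equiv 0 \pmod{n}$ for every $j \in J$, the terms indexed by $r \in J$ contribute $0$ modulo $n$, so membership in $H$ reduces to the congruence $\sum_{r \notin J} r a_r \equiv 0 \pmod{n}$.  These two conditions together are exactly the defining conditions of $H_J$, which completes the argument.  The proof is essentially mechanical once the coordinate formula for the action of $\widehat{H}$ is in hand; the only thing requiring care is to keep consistent the parametrization of $\widehat{H}$ by tuples $(a_1,\dots,a_{n-1})$ with the indexing of the coordinate functions $z_i$ and the defining equations of $\widehat{\cw}_{(c_j)}$, so I do not anticipate any genuine obstacle.
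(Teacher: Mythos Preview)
Your proposal is correct and follows essentially the same approach as the paper's proof: both arguments reduce to the observation that $h = (\omega^{a_1},\dots,\omega^{a_{n-1}})$ preserves $\widehat{\cw}$ precisely when $\omega^{a_j} = 1$ for all $j \in J$, and then combine this with the description of $H$ from Proposition~\ref{prop.H} to obtain the defining conditions of $H_J$. Your version spells out the coordinate action and the case analysis for $I$, $J$, $K$ in more detail, but the substance is the same.
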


\begin{proof}
An element $h = (\omega^{a_1}, \ldots, \omega^{a_{n-1}})$ of $H$ preserves $\widehat{\cw}$ if and only if $ \omega^{a_j} = 1$ for all $j \in J$.  We may therefore assume $a_j\equiv 0 \mod n$ for all $j\in J$.  Since $h\in H$, it must also satisfy the conditions of~\eqref{e.H}.  The
proposition follows.
\end{proof}

\begin{example} \label{ex.hw} Suppose $n=6$ and consider the partition of $\{1,2,3,4,5\}$ defined by $I = \{1,3,5\}$, $J=\{4\}$, and $K=\{2\}$. Set $(c_j)=(c_4)=(0)$.  In this example, the ideal defining $\hat\cw$ from~\eqref{e.widehat} is $\mathcal{I}= \langle z_2, z_4 - 1 \rangle$.  By Propositions \ref{prop.H} and \ref{prop.HW}, $H$ consists of the set of $(\omega^{a_i})$ satisfying
$a_1 + 2 a_2 + 3 a_3 + 4 a_4 + 5 a_5 = 0$, and $H_J$ is the subgroup of $H$ equal to the set of
$(\omega^{a_i})$ satisfying $a_4 = 0$,
$a_1 + 2 a_2 + 3 a_3 + 5 a_5 \equiv 0 \mod 6$. 
\end{example}

The main result of this subsection, Proposition \ref{prop.quotient} below, is that
there is an isomorphism $\widehat{\cw}/H_J \to \cw$.
We begin with a lemma.

\begin{Lem}\label{lemma.ringiso}  Let $C$ be a ring with an action of a finite group
$M$, and let $I$ be an $M$-stable ideal in $C$.  There is a ring isomorphism $(C/I)^{M} \simeq C^{M} / I^{M}$.
\end{Lem}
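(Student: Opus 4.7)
The plan is to produce the isomorphism $(C/I)^{M} \simeq C^{M}/I^{M}$ via the averaging (Reynolds) operator, using that in the intended application $C$ is a $\C$-algebra so $|M|$ is invertible. There is an obvious map to work with: the restriction of the quotient $C \to C/I$ to $C^M$ takes values in $(C/I)^M$ and sends $I^M = I \cap C^M$ to zero (this equality uses that $I$ is $M$-stable), so it factors through a well-defined ring homomorphism
\[
\bar{\varphi} : C^M / I^M \longrightarrow (C/I)^M.
\]
I would prove this map is both injective and surjective.

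For injectivity, I would observe that if $f \in C^M$ maps to $0$ in $C/I$, then $f \in I$, so $f \in I \cap C^M = I^M$, which is exactly what we need. This step is essentially a tautology from the $M$-stability of $I$.

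For surjectivity, I would introduce the averaging map
\[
\pi : C \longrightarrow C^M, \qquad \pi(c) = \frac{1}{|M|} \sum_{m \in M} m \cdot c,
\]
which makes sense because $|M|$ is invertible in $C$ (we are over $\C$). Given a class $\bar{g} \in (C/I)^M$ represented by $g \in C$, the $M$-invariance of $\bar{g}$ means $m \cdot g - g \in I$ for every $m \in M$. Since $I$ is an ideal, it follows that $\pi(g) - g = \frac{1}{|M|}\sum_{m} (m \cdot g - g) \in I$. Hence $\pi(g) \in C^M$ is a lift of $\bar{g}$, proving surjectivity.

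The main (very mild) obstacle is that the lemma as stated does not explicitly require $|M|$ to be invertible, so I would add a remark that the argument uses the averaging operator and hence implicitly assumes this---fine for our applications, where $C$ is always a $\C$-algebra and $M$ is finite. Once this is in place, the argument is essentially formal and uses only the $M$-stability of $I$ together with the Reynolds operator.
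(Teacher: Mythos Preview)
Your proof is correct and follows essentially the same approach as the paper's: both define the natural map $C^M/I^M \to (C/I)^M$, establish injectivity by identifying the kernel as $I \cap C^M = I^M$, and prove surjectivity using the Reynolds operator $\frac{1}{|M|}\sum_{m} m\cdot c$. Your added remark that the argument requires $|M|$ to be invertible (which the paper uses without comment) is a welcome clarification.
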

\begin{proof} The composition $C^M \to C \to C/I$ has kernel $I^M= I \cap C^M$, 
which yields an injective homomorphism $C^M /I^M \to (C/I)^M$.  To prove that this
homomorphism is surjective, observe that $c+I \in (C/I)^M$ if and only if $c+I= mc+I $ for all $m \in M$.  
Hence $c+I= c'+I$, where 
\[
c' = \frac{1}{|M|}\sum_{m\in M} mc \in C^M,
\]
and surjectivity follows.
\end{proof}

The map $\widehat{\cw}/H_J \to \cw$ is defined as follows.
The ideal of $\widehat{\cw}$ in the ring $A$ is
 \begin{equation} \label{e.widehat}
 \mathcal{I}  =   \langle z_j - \omega^{c_j}, z_k \mid j\in J, k\in K \rangle,
 \end{equation}
 so $\C[\widehat{\cw}] = A/\ci$.  The group $H_J$ preserves $ \mathcal{I}$, and
using Lemma \ref{lemma.ringiso}, we see that
 $$
 \C[\widehat{\cw} / H_J] = (A / \ci)^{H_J} \cong A^{H_J}/ \mathcal{I}^{H_J}.
 $$
The map
$$
\rho: \widehat{\toric} = \spec A \to \toric = \spec A^H
$$
corresponds to the inclusion of rings
$A^H \to A$, and $\cw = \rho(\widehat{\cw})$ is the subvariety of $\toric$
defined by the ideal $\mathcal{I} \cap A^H$.  Thus, 
$\C[\cw] = A/(\mathcal{I} \cap A^H)$.

The composition
\begin{equation}\label{e.ainject0}
A^H \to A^{H_J} \to A^{H_J}/ \mathcal{I}^{H_J}
\end{equation}
has kernel $\mathcal{I} \cap A^H$, and hence induces an injective ring homomorphism 
\begin{equation} \label{e.ainject}
A^H/(\mathcal{I} \cap A^H) \to A^{H_J}/ \mathcal{I}^{H_J}.
\end{equation}
This yields a corresponding map of affine varieties:
\begin{equation}  \label{e.ainject2}
\widehat{\cw}/H_J = \spec A^{H_J}/ \mathcal{I}^{H_J} \to \cw = \spec A^H/(\mathcal{I} \cap A^H).
\end{equation}

We will prove that \eqref{e.ainject2} is an isomorphism by showing that the ring
map  \eqref{e.ainject} is an isomorphism.  We have already observed that  \eqref{e.ainject} is injective,
so it suffices to show surjectivity.  

The next proposition describes the rings $A^H$ and $A^{H_J}$ explicitly as the span of certain monomials in the $z_i$.

\begin{Prop} \label{prop.invariantrings}
\begin{enumerate}
\item $A^H$ is the span of the monomials $z_1^{b_1} \cdots z_{n-1}^{b_{n-1}}$ where, modulo $n$, the
tuple $(b_1, \ldots, b_{n-1})$ is a multiple
of $(1,\ldots, n-1)$.
\item $A^{H_J}$ is the span of the monomials $z_1^{b_1} \cdots z_{n-1}^{b_{n-1}}$ where, modulo $n$, the tuple
$(b_r)_{r \not\in J}$ is a multiple of $(r)_{r \not\in J}$.  \textup{(}The elements $b_j$  for $j \in J$
are arbitrary.\textup{)}
\end{enumerate}
\end{Prop}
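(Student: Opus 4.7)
The plan is to exploit the fact that $\widehat{H}$ acts on $A = \C[z_1, \ldots, z_{n-1}]$ diagonally: every monomial $z_1^{b_1}\cdots z_{n-1}^{b_{n-1}}$ is an eigenvector, with $(\omega^{a_1}, \ldots, \omega^{a_{n-1}})$ acting by a root of unity of the form $\omega^{\pm\sum_r a_r b_r}$ (the sign depends only on the convention chosen for the torus action on characters). Consequently, a polynomial is fixed by a subgroup $K \subseteq \widehat{H}$ if and only if each of its monomial summands is, and so $A^K$ has a basis consisting of $K$-invariant monomials. Both parts of the proposition are therefore reduced to identifying which tuples $(b_1, \ldots, b_{n-1})$ yield an invariant monomial.

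For part (1), combining the above with Proposition~\ref{prop.H} shows that $z_1^{b_1}\cdots z_{n-1}^{b_{n-1}}$ is $H$-invariant exactly when
\[
a_1 b_1 + \cdots + a_{n-1} b_{n-1} \equiv 0 \mod n
\]
for every $(a_1, \ldots, a_{n-1}) \in (\Z/n)^{n-1}$ satisfying $a_1 + 2a_2 + \cdots + (n-1)a_{n-1} \equiv 0 \mod n$. Equivalently, $(\bar b_1, \ldots, \bar b_{n-1})$ must lie in the orthogonal complement $L^\perp$ of the subgroup $L = \ker\bigl((a_r) \mapsto \sum_r r a_r\bigr)$ inside $(\Z/n)^{n-1}$, with respect to the standard pairing.

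The main step is then a small duality computation. The map $(a_r) \mapsto \sum_r r a_r$ is surjective onto $\Z/n$ (take $a_1 = 1$ and the remaining $a_r = 0$), so $|L| = n^{n-2}$ and therefore $|L^\perp| = n$. Meanwhile, the tuple $(1, 2, \ldots, n-1)$ visibly lies in $L^\perp$ and, having first coordinate $1$, has order exactly $n$ in $(\Z/n)^{n-1}$. Thus $L^\perp$ is the cyclic subgroup generated by $(1, 2, \ldots, n-1)$, which is precisely the description in part (1). There is no serious obstacle; the only mild subtlety is organizing the duality argument cleanly.

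Part (2) follows from the same mechanism applied to $H_J$. Using the definition of $H_J$ preceding Proposition~\ref{prop.HW}, a monomial is $H_J$-invariant if and only if $\sum_{r \notin J} a_r b_r \equiv 0 \mod n$ holds for all $(a_r)_{r \notin J}$ with $\sum_{r \notin J} r a_r \equiv 0 \mod n$, since the coordinates $a_j$ for $j \in J$ are forced to vanish and thus impose no constraint on $b_j$. This is an identical duality problem, now inside $(\Z/n)^{[n-1]\setminus J}$, and the same computation shows that $(\bar b_r)_{r \notin J}$ must be a multiple of $(r)_{r \notin J}$ modulo $n$, with the $b_j$ for $j \in J$ unconstrained, as asserted.
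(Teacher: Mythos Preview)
Your argument is correct and takes a cleaner, more conceptual route than the paper's.  Where you invoke the duality formula $|L^\perp| = |(\Z/n)^{n-1}|/|L|$ and then exhibit a generator of $L^\perp$ of the right order, the paper instead writes down an explicit $(n-2)\times(n-1)$ matrix whose rows span $L$ modulo $n$ (obtained by solving $a_{n-1} \equiv a_1 + 2a_2 + \cdots + (n-2)a_{n-2}$), and then reads off the conditions on $\vb$ by dotting with each row.  Your duality argument is shorter and makes transparent why the answer is a cyclic group; the paper's matrix approach is more concrete and requires no appeal to $|L|\cdot|L^\perp| = |G|$.

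One caution on part (2): the phrase ``the same computation'' hides a genuine difference.  In part (1) the map $(a_r) \mapsto \sum_r r a_r$ is surjective onto $\Z/n$ (witnessed by $a_1 = 1$), and the tuple $(1, \ldots, n-1)$ has order exactly $n$ because its first coordinate is $1$.  In part (2) the analogous map $(a_r)_{r \notin J} \mapsto \sum_{r \notin J} r a_r$ need not be surjective when $1 \in J$: its image is the subgroup of $\Z/n$ of order $n/d_*$, where $d_* = \gcd(\{r : r \notin J\} \cup \{n\})$, and the tuple $(r)_{r \notin J}$ has order $n/d_*$ rather than $n$.  The two counts still match, so $L'^\perp$ is again generated by $(r)_{r \notin J}$ and your conclusion stands, but this step deserves a sentence rather than being absorbed into ``the same computation''.
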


\begin{proof} We prove (1).  Since the action of $H$ on $A = \C[\hat\toric]=\C[z_1,\ldots, z_{n-1}]$ takes any monomial to a multiple
of itself, $A^H$ is spanned by the $H$-invariant monomials.  
By Proposition \ref{prop.H}, $H$ consists of the elements $(\omega^{a_1},\dots,\omega^{a_{n-1}})$,
where the $a_i$ satisfy
$\sum_{r = 1}^{n-1} r a_r \equiv 0 \mod n$. 
Rewriting this condition gives 
$$a_{n-1} \equiv a_1 + 2 a_2 + \cdots + (n-2) a_{n-2} \mod n.$$  
So modulo $n$,
$\va = (a_1, \ldots, a_{n-1})$ is in the row span of the $(n-2)\times(n-1)$ matrix,
$$
\begin{bmatrix}
1 & 0 & 0 & \cdots & 0  & 1 \\
0 & 1 & 0 & \cdots & 0  & 2 \\
0 & 0 & 1 & \cdots & 0  & 3 \\
\vdots & \vdots & \vdots & \cdots & \vdots & \vdots \\
0 & 0 & 0 & \cdots & 1  & (n-2)
\end{bmatrix}.
$$
A monomial $z_1^{b_1} \cdots z_{n-1}^{b_{n-1}}$ is therefore $H$-invariant if and only if the dot product of 
$\vb = (b_1, \ldots, b_{n-1})$ with any row of the matrix above is $0$ modulo $n$.  This yields the conditions
\[
b_1 \equiv - b_{n-1} \mod n,\;\; b_2 \equiv -2 b_{n-1} \mod n,\; \ldots, \; b_{n-2} \equiv -(n-2) b_{n-1} \mod n 
\]
so $\vb \equiv -b_{n-1}(1, 2, \ldots, n-1) \mod n$.
This proves (1); the proof of (2) is similar.
\end{proof}

\begin{Prop}\label{prop.invariantsum}
With notation as above, $A^{H_J} = A^H + \mathcal{I}^{H_J}$.
\end{Prop}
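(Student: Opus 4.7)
The plan is to verify both inclusions in the equality. For the containment $A^H + \mathcal{I}^{H_J} \subseteq A^{H_J}$, one first observes that $H_J \subseteq H$: any $(\omega^{a_r}) \in H_J$ satisfies $a_j \equiv 0 \mod n$ for $j \in J$ and $\sum_{r \notin J} r a_r \equiv 0 \mod n$, so $\sum_{r=1}^{n-1} r a_r \equiv 0 \mod n$, placing it in $H$ by Proposition~\ref{prop.H}. Consequently $A^H \subseteq A^{H_J}$, and since $\mathcal{I}^{H_J} = \mathcal{I} \cap A^{H_J} \subseteq A^{H_J}$ by definition, this inclusion is immediate.

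The substantive inclusion $A^{H_J} \subseteq A^H + \mathcal{I}^{H_J}$ can be reduced to a monomial-by-monomial check using Proposition~\ref{prop.invariantrings}(2). It suffices to show that each $H_J$-invariant monomial $m = z_1^{b_1} \cdots z_{n-1}^{b_{n-1}}$, characterized by the condition that $(b_r)_{r \notin J} \equiv t(r)_{r \notin J} \mod n$ for some $t \in \Z/n\Z$, lies in $A^H + \mathcal{I}^{H_J}$. If $b_k > 0$ for some $k \in K$, then $z_k \mid m$ gives $m \in \mathcal{I}$, and combined with $m \in A^{H_J}$ we get $m \in \mathcal{I} \cap A^{H_J} = \mathcal{I}^{H_J}$ and we are done. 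Otherwise $b_k = 0$ for all $k \in K$, which forces $tk \equiv 0 \mod n$ for every $k \in K$.

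In this second case I propose to build an $H$-invariant element $g$ explicitly matching $m$ modulo $\mathcal{I}^{H_J}$. For each $j \in J$, let $d_j \in \{0, 1, \ldots, n-1\}$ be the unique representative with $d_j \equiv tj \mod n$; set $d_i := b_i$ for $i \in I$ and $d_k := 0$ for $k \in K$. Define
$$
g := \omega^{\sum_{j \in J} c_j (b_j - d_j)} \prod_{r=1}^{n-1} z_r^{d_r}.
$$
The tuple $(d_r)_{r=1}^{n-1}$ is $\equiv t(r) \mod n$ (using $tk \equiv 0$ for $k \in K$), so by Proposition~\ref{prop.invariantrings}(1), $g \in A^H$. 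Reducing both $m$ and $g$ modulo $\mathcal{I}$ via $z_j \mapsto \omega^{c_j}$ and $z_k \mapsto 0$, both evaluate to $\omega^{\sum_{j \in J} c_j b_j} \prod_{i \in I} z_i^{b_i}$, so $g - m \in \mathcal{I}$; since $g, m \in A^{H_J}$, this shows $g - m \in \mathcal{I} \cap A^{H_J} = \mathcal{I}^{H_J}$. Therefore $m = g - (g - m) \in A^H + \mathcal{I}^{H_J}$.

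The main obstacle is exhibiting an $H$-invariant element $g$ whose reduction modulo $\mathcal{I}$ agrees with that of $m$; the two a priori obstructions to doing so are that $H$-invariance of $g$ imposes the strong constraint $(d_r) \equiv t(r) \mod n$ for all $r$ (including $k \in K$), and that the exponents $d_j$ for $j \in J$ may not match $b_j$. Both are resolved cleanly here: the condition $tk \equiv 0 \mod n$ forced by $b_k = 0$ permits the choice $d_k = 0$, so $g \notin \mathcal{I}$, while the scalar prefactor $\omega^{\sum c_j (b_j - d_j)}$ precisely corrects for the discrepancy between $d_j$ and $b_j$ after applying $z_j \mapsto \omega^{c_j}$.
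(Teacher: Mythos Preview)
Your proof is correct and follows essentially the same strategy as the paper's: reduce to monomials via Proposition~\ref{prop.invariantrings}, adjust the exponents on the $z_j$ ($j\in J$) to produce an $H$-invariant element, and verify that the discrepancy lies in $\mathcal{I}\cap A^{H_J}=\mathcal{I}^{H_J}$. The only cosmetic difference is that the paper \emph{multiplies} the monomial by $\prod_{j\in J} z_j^{m_j}$ (raising the $J$-exponents), which handles all monomials uniformly without your case split on whether some $b_k>0$; your variant instead \emph{replaces} the $J$-exponents by canonical residues and compensates with a scalar, which requires first disposing of the case $m\in\mathcal{I}$.
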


\begin{proof}
We will show that given $f = z_1^{b_1} \cdots z_{n-1}^{b_{n-1}} \in A^{H_J}$, we can write $f$
as a sum of an element of $A^H$ and an element of $\mathcal{I}^{H_J}$.  By Proposition~\ref{prop.invariantrings} (2), there exists $a\in \Z$ such that $b_r \equiv ar \mod n$ for all $r\notin J$.
For each $j \in J$, choose a non-negative integer
$m_j$ so that $b_j + m_j \equiv aj \mod n$.  Let $$g = \prod_{j \in J} z_j^{m_j} \cdot f   = \prod_{r \not\in J} z_r^{b_r} \prod_{j \in J} z_j^{b_j + m_j}.$$
Then $g \in A^H$ by Proposition~\ref{prop.invariantrings}(1).  Set $c = \prod_{j \in J} \omega^{c_j m_j}$.  We claim that
$u :=c f -g $ is in $\mathcal{I}$.  Indeed, $u = (c - \prod_{j \in J}z_j^{m_j} ) f $.  Since $c - \prod_{j \in J}z_j^{m_j}$ equals 
$0$ whenever $z_j = \omega^{c_j}$ for all $j\in J$, we have  $c - \prod_{j \in J}z_j^{m_j} \in \mathcal{I}$, proving the claim.
Note that $u \in A^{H_J}$ since $f$ and $g$ are elements of $A^{H_J}$.   Hence $u \in \mathcal{I}^{H_J}$.  We have
$$
f = c^{-1} g + c^{-1} u \in A^H + \mathcal{I}^{H_J},
$$
as desired.  This completes the proof.
\end{proof}

\begin{example}
Suppose $n = 6$, with $\widehat{\cw}$ and $H_J$ as in Example \ref{ex.hw}.  By Proposition~\ref{prop.invariantrings},
$f = z_1 z_2^2 z_3^3 z_4 z_5^5$ is in $A^{H_J}$.  We have $J = \{4 \}$ and $(c_j) = (c_4) = (0)$ from Example \ref{ex.hw}.
Choose $m_4 = 3$ so $g = z_4^3 f$; in the notation of the proof of Proposition \ref{prop.invariantsum}, we have $c = 1$, 
so $h = f - g$.  Since
$1 - z_4^3 \in \mathcal{I}$, we have 
$h = (1 - z_4^3) f \in \mathcal{I}$, and $f = g+h \in A^H + \mathcal{I}^{H_J}$.
\end{example}

The following proposition is the main result of this subsection.

\begin{Prop} \label{prop.quotient}
The natural map $A^H/(\mathcal{I} \cap A^H) \to A^{H_J}/ \mathcal{I}^{H_J}$ of~\eqref{e.ainject} is an isomorphism.  Hence the corresponding map of varieties $\widehat{\cw}/H_J \to \cw$ is an isomorphism.
\end{Prop}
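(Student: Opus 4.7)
The plan is to use the two results already assembled---the injectivity of the map \eqref{e.ainject} noted in the text, and the decomposition $A^{H_J} = A^H + \mathcal{I}^{H_J}$ from Proposition~\ref{prop.invariantsum}---to reduce the statement to a short diagram-chase. Since injectivity of \eqref{e.ainject} is already in hand, I only need to establish surjectivity.

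For surjectivity, I would start with an arbitrary class $\bar f \in A^{H_J}/\mathcal{I}^{H_J}$ and choose a representative $f \in A^{H_J}$. By Proposition~\ref{prop.invariantsum}, we may write $f = g + u$ with $g \in A^H$ and $u \in \mathcal{I}^{H_J}$. Then $\bar f = \bar g$ in $A^{H_J}/\mathcal{I}^{H_J}$, and $\bar g$ is manifestly the image of the class of $g$ in $A^H/(\mathcal{I} \cap A^H)$ under the map \eqref{e.ainject}. Combined with the injectivity noted in \eqref{e.ainject0}--\eqref{e.ainject}, this shows \eqref{e.ainject} is a ring isomorphism.

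To pass from the ring statement to the claim about varieties, I would simply note that \eqref{e.ainject2} is the morphism of affine varieties $\mathrm{Spec}\bigl(A^{H_J}/\mathcal{I}^{H_J}\bigr) \to \mathrm{Spec}\bigl(A^H/(\mathcal{I}\cap A^H)\bigr)$ induced by the now-established ring isomorphism. An isomorphism of coordinate rings gives an isomorphism of the corresponding affine schemes, so $\widehat{\cw}/H_J \to \cw$ is an isomorphism of varieties.

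I do not expect a significant obstacle here: all of the nontrivial content---the explicit description of the invariant rings in Proposition~\ref{prop.invariantrings}, and the decomposition identity in Proposition~\ref{prop.invariantsum}---has already been carried out, and the remaining argument is essentially formal. The only place where care is needed is to make sure the element $g \in A^H$ constructed via Proposition~\ref{prop.invariantsum} indeed maps to $\bar f$, which is immediate since the difference $f - g = u$ lies in $\mathcal{I}^{H_J}$ by construction.
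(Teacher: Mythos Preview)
Your proposal is correct and follows essentially the same approach as the paper: reduce to surjectivity, invoke Proposition~\ref{prop.invariantsum} to write any $f \in A^{H_J}$ as $g + u$ with $g \in A^H$ and $u \in \mathcal{I}^{H_J}$, and conclude that $\bar f$ is in the image. The paper's proof is slightly terser (it phrases surjectivity as surjectivity of the composition $A^H \to A^{H_J}/\mathcal{I}^{H_J}$ in \eqref{e.ainject0}), but the content is identical.
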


\begin{proof}
As noted above, we need only prove that~\eqref{e.ainject} is surjective, which is equivalent
to the statement that the map $A^H \to A^{H_J}/ \ci^{H_J}$ from~\eqref{e.ainject0} is surjective.
If $f \in A^{H_J}$, then by Proposition~\ref{prop.invariantsum}, we can write $f = f_1 + f_2$, where $f_1 \in A^H$ and $f_2 \in  \mathcal{I}^{H_J}$.
Thus $f + \mathcal{I}^{H_J}$ is the image of $f_1$ under~\eqref{e.ainject0}.  Hence 
the map $A^H \to A^{H_J}/ \ci^{H_J}$ is indeed surjective.
\end{proof}

\begin{example}
The injectivity of the map $A^H/(\mathcal{I} \cap A^H) \to A^{H_J}/ \mathcal{I}^{H_J}$ is valid more generally
for any ring $A$ with an action of a finite group $H$, along with an ideal $\mathcal{I}$ such that $H_J$ is the
subgroup of $H$ preserving $\ci$.  However, in this generality, the map can fail to be surjective.  For example,
let $A = \C[x,y]$, and $H = \Z_2$, with the nontrivial element acting by multiplication by $-1$.  Let
$\mathcal{I} = \langle (x-1)(y-1) \rangle$.  Then $H_J = \{ 1 \}$, so $A^{H_J}/\mathcal{I}^{H_J} = A/\mathcal{I}$.  The map
$$
A^H = \C[x^2, xy, y^2]  \to A/I = \C[x,y]/  \langle (x-1)(y-1) \rangle
$$
is not surjective.  This follows since the corresponding map of affine schemes, which
is the projection of the union of the lines $x=1$ and $y=1$ to
$\C^2/H$, is not injective (e.g.~the points $(1,-1)$ and $(-1,1)$ have the same image).
\end{example}


\subsection{Components and the action of $Z$} \label{ss.componentsZ}
Recall the diagram:
\begin{equation} \label{e.toricmaps2}
\begin{tikzcd} 
\widehat \toric \cong \C^{n-1} \arrow[rr,bend left,"\widehat \rho"] \arrow[r] \arrow[r, "\rho"] & \toric \arrow[r,"\pi"] & \toric_{ad} \cong \C^{n-1}.
\end{tikzcd}
\end{equation}
The main results of this subsection, Propositions \ref{p.components} and \ref{p.zaction}, give a parametrization of the set of
components of $\pi^{-1}(\cw_{ad})$ in $\toric$, and describe the $Z$-action on this set of components
in terms of the parametrization.

Write $J^c := I \cup K$ for the complement of $J$ in $[n-1]$.  The ideal of $\cw_{ad}$ in $\C[\toric_{ad}] = \C[x_1,\ldots, x_{n-1}]$ is $\mathcal{I}(\cw_{ad}) = \langle x_j - 1, x_k \mid j\in J,\,k\in K \rangle$.  The inverse image $\widehat{\rho}^{-1}(\cw_{ad})$ is a disjoint union of subvarieties $\hat\cw_{(c_j)}$ as defined in the previous section, one for each tuple of integers
 $(c_j)\in \Z^{|J|}$ with $c_j\in \{0,\ldots, n-1\}$.  Each $\hat\cw_{(c_j)}$ is isomorphic to affine space $\C^{|I|}$, and
 there are $|J|^n$ such components, each corresponding to a tuple $(c_j)$.  The group $\widehat{H} = \ker(\hat\rho)\simeq Z_n \times \cdots \times \Z_n$ acts transitively on the set of these components.  Recall that $\cw_{(c_j)}:= \rho(\hat\cw_{(c_j)}) \subseteq \toric$.
 
\begin{Lem} \label{lem.disjoint}
Let $(c_j),(d_j)\in \Z^{|J|}$ with $c_j,d_j\in \{0,\ldots, n-1\}$.  Then either $\cw_{(c_j)} = {\cw}_{(d_j)}$, or
${\cw}_{(c_j)}$ is disjoint from $\cw_{(d_j)}$.
\end{Lem}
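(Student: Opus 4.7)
The plan is to exploit the compatibility of the quotient map $\rho:\widehat{\toric}\to\toric$ with the $\widehat H$-action on $\widehat{\toric}$, using the fact that $H\subset\widehat H$ is precisely the kernel of $\rho$.

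First I would analyze the action of $\widehat H$ on the subvarieties $\widehat{\cw}_{(c_j)}$. Any $h=(\omega^{a_1},\ldots,\omega^{a_{n-1}})\in\widehat H$ acts on $\widehat{\toric}=\spec\C[z_1,\ldots,z_{n-1}]$ by $(z_i)\mapsto(\omega^{a_i}z_i)$. Since $\widehat{\cw}_{(c_j)}$ is cut out by $z_j=\omega^{c_j}$ for $j\in J$ and $z_k=0$ for $k\in K$, the image $h\cdot\widehat{\cw}_{(c_j)}$ is the variety cut out by $z_j=\omega^{c_j+a_j}$ ($j\in J$), $z_k=0$ ($k\in K$), i.e.\ $h\cdot\widehat{\cw}_{(c_j)}=\widehat{\cw}_{(c_j+a_j)}$ (with indices taken modulo $n$). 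In particular, distinct tuples $(c_j)\mod n$ give pairwise disjoint subvarieties of $\widehat{\toric}$, as they project to distinct points of $(\C^*)^{|J|}$ under $(z_i)\mapsto(z_j)_{j\in J}$.

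Next, since $\rho$ is the quotient map for the $H$-action, we have $\rho(h\cdot \hat w)=\rho(\hat w)$ for all $h\in H$, and hence
\[
\cw_{(c_j+a_j)}=\rho(h\cdot\widehat{\cw}_{(c_j)})=\rho(\widehat{\cw}_{(c_j)})=\cw_{(c_j)}
\]
for every $h=(\omega^{a_i})\in H$. Now assume $\cw_{(c_j)}\cap\cw_{(d_j)}\neq\emptyset$ and pick $w$ in the intersection. Choose $\hat w\in\widehat{\cw}_{(c_j)}$ with $\rho(\hat w)=w$; since $w\in\rho(\widehat{\cw}_{(d_j)})$, there is also $\hat w'\in\widehat{\cw}_{(d_j)}$ with $\rho(\hat w')=w$. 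Then $\hat w'=h\cdot\hat w$ for some $h=(\omega^{a_i})\in H$, so $\hat w'\in(h\cdot\widehat{\cw}_{(c_j)})\cap\widehat{\cw}_{(d_j)}=\widehat{\cw}_{(c_j+a_j)}\cap\widehat{\cw}_{(d_j)}$. The disjointness established in the first step forces $(d_j)\equiv(c_j+a_j)\mod n$, hence $\widehat{\cw}_{(d_j)}=h\cdot\widehat{\cw}_{(c_j)}$, and applying $\rho$ yields $\cw_{(d_j)}=\cw_{(c_j)}$.

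There is no serious obstacle here; the only point requiring care is the bookkeeping that the $\widehat H$-action permutes the components $\widehat{\cw}_{(c_j)}$ exactly by translating the tuple of exponents, together with the observation that two $\widehat{\cw}_{(e_j)}$ indexed by distinct tuples mod $n$ cannot meet. Everything else is a mechanical use of the fact that $H=\ker\rho$.
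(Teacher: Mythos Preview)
Your proof is correct and follows essentially the same approach as the paper: lift a point in the intersection through the $H$-quotient map $\rho$, observe that the two lifts differ by some $h\in H$, and use that $\widehat H$ permutes the components $\widehat{\cw}_{(c_j)}$ to conclude $h\cdot\widehat{\cw}_{(c_j)}=\widehat{\cw}_{(d_j)}$. You have simply made explicit the details (the formula $h\cdot\widehat{\cw}_{(c_j)}=\widehat{\cw}_{(c_j+a_j)}$ and the disjointness of the $\widehat{\cw}$'s) that the paper packages into the phrase ``$\widehat H$ permutes the components of $\widehat\rho^{-1}(\cw_{ad})$.''
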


\begin{proof}
We prove that if $\cw_{(c_j)}$ and $\cw_{(d_j)}$ intersect, then they coincide.
Suppose then that these intersect.  Since $\rho$ is the quotient by $H$, this means that there is some
$p \in \widehat{\cw}_{(c_j)}$ and some $h \in H$ such that $h\cdot p\in \hat\cw_{(d_j)} $. But this implies
that $h \cdot \widehat{\cw}_{(c_j)} = \widehat{\cw}_{(d_j)}$, since 
as noted above, the group $\widehat{H}$ permutes the components of $\widehat{\rho}^{-1}(\cw_{ad})$.  Thus $\cw_{(c_j)} = \cw_{(d_j)}$.
\end{proof}
 
 It follows that $\pi^{-1}(\cw_{ad})$ is the disjoint union of the distinct ${\cw}_{(c_j)}$, and that these are the irreducible
 components of $\pi^{-1}(\cw_{ad})$.

 Let $d_* := \gcd( r\mid r\in [n]\setminus J = J^c\cup\{n\} )$.  In other words, $d_*$ is a generator of the
subgroup of $\Z_n$ generated by the elements $r\in [n]\setminus J$. 
Define $\phi:  \Z^{|J|} \to \Z_{d_*}$ by  $\phi((c_j) ) = \sum_{j \in J} j c_j  \mod d_*$.  We abuse
notation and write $\phi(\cw_{(c_j)}) = \phi((c_j))$.

\begin{Prop} \label{p.components} 
The components $\widehat{\cw}_{(c_j)}$ and $\widehat{\cw}_{(d_j)}$ of $\hat\rho^{-1}(\cw_{ad})$ are in the same $H$-orbit
if and only if $\phi((c_j)) = \phi((d_j))$.  The map $\phi : \{ \cw_{(c_j)} \mid (c_j)\in \Z^{|J|}  \} \to \Z_{d_*}$ induces a bijection between 
the components of $\pi^{-1}(\cw_{ad})$ and $\Z_{d_*}$.
\end{Prop}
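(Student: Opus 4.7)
\medskip
\noindent \textbf{Proof proposal.} The plan is to first reduce the statement about components of $\pi^{-1}(\cw_{ad})$ in $\cv$ to a statement about $H$-orbits on the finite set of components $\{\widehat{\cw}_{(c_j)} \mid (c_j)\in \{0,\dots,n-1\}^{|J|}\}$ of $\widehat{\rho}^{-1}(\cw_{ad})$, and then analyze these $H$-orbits directly using the explicit description of $H$ from Proposition~\ref{prop.H}. The reduction is automatic: by Lemma~\ref{lem.disjoint} and its proof, $\cw_{(c_j)} = \cw_{(d_j)}$ if and only if there exists $h \in H$ with $h\cdot \widehat{\cw}_{(c_j)} = \widehat{\cw}_{(d_j)}$, otherwise they are disjoint. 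Hence $\pi^{-1}(\cw_{ad})$ is the disjoint union of the distinct $\cw_{(c_j)}$, and the components of $\pi^{-1}(\cw_{ad})$ are in bijection with $H$-orbits on the set of $\widehat{\cw}_{(c_j)}$.

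Next, I would compute the $H$-action explicitly. An element $h=(\omega^{a_1},\dots,\omega^{a_{n-1}}) \in \widehat T$ acts on $\hat\cv$ by $z_i \mapsto \omega^{a_i} z_i$, so it sends the subvariety $\widehat{\cw}_{(c_j)} = \{z_j = \omega^{c_j},\, z_k = 0 \mid j \in J, k \in K\}$ to $\widehat{\cw}_{(c_j + a_j)}$ (indices read mod $n$). Therefore $\widehat{\cw}_{(c_j)}$ and $\widehat{\cw}_{(d_j)}$ lie in the same $H$-orbit iff there exists $(a_1,\dots,a_{n-1}) \in \Z^{n-1}$ with $a_j \equiv d_j - c_j \mod n$ for $j\in J$, and satisfying the condition $\sum_{r=1}^{n-1} r a_r \equiv 0 \mod n$ from Proposition~\ref{prop.H}. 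Setting $b_j := d_j - c_j$, this is equivalent to being able to choose $(a_r)_{r \in J^c}$ such that
\[
\sum_{r\in J^c} r a_r \equiv -\sum_{j \in J} j b_j \mod n.
\]

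The image of the homomorphism $\Z^{|J^c|} \to \Z_n$, $(a_r) \mapsto \sum_{r\in J^c} r a_r$, is the subgroup generated by $\{r \mod n \mid r \in J^c\}$, which is $d_* \Z_n$ since $d_* = \gcd(J^c \cup \{n\})$. Thus a solution exists iff $\sum_{j\in J} j b_j \equiv 0 \mod d_*$, i.e., iff $\phi((c_j)) = \phi((d_j))$. This proves the first assertion and shows that $\phi$ descends to an injection from the set of $H$-orbits (equivalently, the set of components of $\pi^{-1}(\cw_{ad})$) into $\Z_{d_*}$.

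For surjectivity, I would note that the image of $\phi : \Z^{|J|} \to \Z_{d_*}$ is the subgroup of $\Z_{d_*}$ generated by $\{j \mod d_* \mid j \in J\}$, and by the same reasoning this subgroup is generated by $\gcd(J \cup \{d_*\})$. Since $d_*$ divides every element of $J^c$ and also $n$, we have $\gcd(J \cup \{d_*\}) \mid \gcd(J \cup J^c \cup \{n\}) = \gcd([n]) = 1$, so $\phi$ is surjective. The main obstacle is purely notational—keeping track of two layers of modular arithmetic (mod $n$ inside $H$, and mod $d_*$ in the target of $\phi$) and verifying that the gcd identity $\gcd(J \cup \{d_*\}) = 1$ really follows from $d_* = \gcd(J^c \cup \{n\})$; once these are set straight the argument is routine.
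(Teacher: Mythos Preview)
Your proposal is correct and follows essentially the same approach as the paper's proof: both reduce to the solvability of $\sum_{r\in J^c} r a_r \equiv -\sum_{j\in J} j(d_j-c_j) \pmod n$ via the explicit description of $H$ in Proposition~\ref{prop.H}, identify the image of the left side as $d_*\Z_n$ (the paper invokes B\'ezout explicitly, you phrase it as the image of a homomorphism), and obtain surjectivity from $\gcd(J\cup\{d_*\})=\gcd([n])=1$. The only cosmetic difference is that the paper treats the two directions of the first assertion separately, while you handle them simultaneously via the subgroup-image language.
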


\begin{proof}
Let $h = (\omega^{a_j})_{j \in [n-1]} \in \widehat{H}$.  Then 
\begin{equation} \label{e.horbit}
h \cdot \widehat{\cw}_{(c_j)} = \widehat{\cw}_{(d_j)}
\end{equation}
 if and only
if $a_j \equiv d_j - c_j\mod n$ for all $j \in J$.  By Proposition~\ref{prop.H}, the element $h$ is in $H$ if and only if $\sum_{r=1}^{n-1} r a_r \equiv 0\mod n$.  Thus, there exists $h\in H$ satisfying~\eqref{e.horbit} if and only if we can find a tuple $(a_r)_{r \in J^c}$ satisfying the equation
\begin{eqnarray}\label{eqn.components}
\sum_{r \in  J^c} r a_r \equiv - \sum_{j \in J} j(d_j - c_j )\mod n.
\end{eqnarray}
The left side of~\eqref{eqn.components} is divisible by $d_*$, as is $n$.  Therefore, the existence of such a tuple implies 
that $\sum_{j \in J} j c_j  \equiv \sum_{j \in J} j d_j \mod d_*$, that is, $\phi((c_j)) = \phi((d_j))$.  Hence, if $\widehat{\cw}_{(c_j)}$
and $\widehat{\cw}_{(d_j)}$ are in the same $H$-orbit, then $\phi((c_j)) = \phi((d_j))$.  

Conversely, suppose
$\phi((c_j)) = \phi((d_j))$.  We will show there is a tuple $(a_r)_{r \in J^c}$ satisfying~\eqref{eqn.components}. 
As $d_*=\gcd(r\mid r\in J^c \cup \{n\})$, B\'ezout's identity implies that
$\sum_{r\in J^c} rb_r \equiv d_* \mod n$ for some $b_r\in \Z$.  Since $\sum_{j \in J} j c_j  \equiv \sum_{j \in J} j d_j \mod d_*$,
we have $- \sum_{j \in J} j(d_j - c_j )=\tau d_*$ for some $\tau\in \Z$.  For each $r\in J^c$, set $a_r = \tau b_r$.  The tuple $(a_r)_{r\in J^c}$ then satisfies~\eqref{eqn.components}.  Hence $\widehat{\cw}_{(c_j)}$
and $\widehat{\cw}_{(d_j)}$ are in the same $H$-orbit.  This proves the first assertion of the proposition, and 
shows that $\phi$ gives an injective map from the set of components of $\pi^{-1}(\cw_{ad})$ to $\Z_{d_*}$.  

To complete the proof, we must show that $\phi$ is surjective.  For this,
we need only show that the $j \in J$ generate $\Z_{d_*}$.  This is equivalent to showing that the $j \in J$ together
with $d_*$ generate $\Z$.  This holds since $d_* \Z$ is the subgroup of $\Z$ generated by $[n] \setminus J$.
\end{proof}

\begin{Rem} \label{rem:non-scheme}
When we refer to the inverse image of a variety, we mean the inverse image with its reduced scheme
structure.  Indeed, $\pi^{-1}(\cw_{ad})$ may be nonreduced if given the
inverse image scheme structure.  For example, suppose $n= 4$,
$I$ is empty, $J = \{1,3 \}$ and $K = \{ 2 \}$.  We have $\toric_{ad} = \spec \C[x_1, x_2, x_3] \cong \C^3$,
and $\cw_{ad}$ is the point $(1,0,1)$.  We have
$\toric = \spec B$, where $B$ is the quotient of the polynomial ring $\C[x_1,x_2,x_3,y_1, y_2, y_3]$
by an ideal
$P$ containing the elements
$y_1^4 - x_1^3 x_2^2 x_3$, $y_2^2 - x_1 x_3$, and $y_3^4 - x_1 x_2^2 x_3^3$.  The
inverse image of $\cw_{ad}$ is the subscheme in $\toric$ corresponding to the ideal $Q$ in $B$ generated by
$x_1-1, x_2$, and $x_3-1$.  As a reduced scheme, $\pi^{-1}(\cw_{ad})$ is a union of two points,
where $x_1 = x_3 = 1$, $x_2 = y_1 = y_3 = 0$, and $y_2 = \pm 1$.  However, the scheme-theoretic
inverse image $\pi^{-1}(\cw_{ad})$ is not reduced.  Indeed, $y_1^4$ and $y_3^4$ are in $Q$, but we claim $y_1$ and $y_3$ are not.
We sketch the verification of this claim for $y_1$.  Suppose by contradiction that $y_1 \in Q$; then $y_1 = (x_1-1)b_1 + x_2 b_2 + (x_3-1) b_3$ for $b_i \in B$.  
The center $Z$ acts trivially on the $x_i$ and on $y_1$ by the character $e^{-\mu_1}|_Z$,
so by replacing each $b_j$
with the appropriate $Z$-isotypic component, we may assume that $Z$ acts on each
$b_j$ by $e^{-\mu_1}|_Z$.  But any element of $B$ on which $Z$ acts by $e^{-\mu_1}|_Z$
is divisible by $y_1$.  Therefore, $b_j = y_1 c_j$ for $c_j \in B$, so
$y_1(1 - (x_1-1)c_1 - x_2 c_2 - (x_3-1) c_3) = 0$.  Since $B$ is an integral domain,
$(x_1-1)c_1 + x_2 c_2 + (x_3-1) c_3 = 1$, but this is impossible, as the elements $x_1-1, x_2, x_3 - 1$ do not generate
the unit ideal in $B$ (since they do not generate the unit ideal in the larger ring $A$).
We conclude that $y_1 \not\in Q$, as claimed.
\end{Rem}

Given $r \in \Z_{d_*}$, we denote by $\cw_r$ the component $\cw_{(c_j)}$ for any tuple $(c_j)$ with $\phi((c_j)) = r$.
This component depends on the decomposition $[n-1] = I \sqcup J \sqcup K$, but we omit this from the notation.

The group $Z = \hat{H}/H$ acts on $\toric$.  Recall that $\underline{\omega}$ denotes the generator of $Z$.

\begin{Prop} \label{p.zaction}
With notation as above, we have $\underline{\omega} \cdot \cw_r = \cw_{r+1}$.
\end{Prop}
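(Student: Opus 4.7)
The plan is to lift the generator $\underline{\omega}\in Z$ to an element of $\widehat H$, apply it to a component $\widehat\cw_{(c_j)}$ of $\hat\rho^{-1}(\cw_{ad})$, and then track the effect on the labeling function $\phi$.

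First, I would choose any preimage $\hat\zeta = (\omega^{a_1},\dots,\omega^{a_{n-1}})\in\widehat H$ of $\underline{\omega}$ under the map \eqref{e.hmap}. By definition of that map, such a lift is characterized by the congruence
\[
\sum_{r=1}^{n-1} r a_r \equiv 1 \mod n.
\]
Since $\widehat H$ permutes the components of $\hat\rho^{-1}(\cw_{ad})$ by $\hat\zeta\cdot\widehat\cw_{(c_j)} = \widehat\cw_{(d_j)}$ with $d_j\equiv c_j+a_j\mod n$ for $j\in J$, applying $\rho$ yields $\underline\omega\cdot\cw_{(c_j)} = \cw_{(d_j)}$. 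It therefore remains to compute $\phi((d_j))$ in terms of $\phi((c_j))$.

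The key computation is
\[
\phi((d_j)) - \phi((c_j)) \equiv \sum_{j\in J} j(d_j - c_j) \equiv \sum_{j\in J} j a_j \mod d_*.
\]
Now split the sum $\sum_{r=1}^{n-1} r a_r = \sum_{j\in J} j a_j + \sum_{r\in J^c} r a_r$. Because $d_*$ divides every $r\in J^c$ as well as $n$ (by definition of $d_*$ as the gcd of $J^c\cup\{n\}$), reducing the congruence $\sum_r r a_r \equiv 1\mod n$ modulo $d_*$ kills the second sum, leaving $\sum_{j\in J} j a_j \equiv 1 \mod d_*$. Combining with the displayed equation gives $\phi((d_j)) = \phi((c_j)) + 1$ in $\Z_{d_*}$, so $\underline\omega\cdot\cw_r = \cw_{r+1}$ by Proposition~\ref{p.components}.

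There isn't really a hard step here once Proposition~\ref{p.components} is in hand; the only conceptual point to get right is that the divisibility $d_*\mid r$ for $r\in J^c\cup\{n\}$ is exactly what is needed to push the normalization $\sum_r r a_r\equiv 1\mod n$ down to the statement $\sum_{j\in J} j a_j\equiv 1\mod d_*$, which is what distinguishes the action of $\underline\omega$ from that of an arbitrary element of $H$ (where the analogous sum is $\equiv 0$). Verifying that $\phi$ is well-defined on the tuple $(d_j)$, i.e.\ independent of the specific lift $\hat\zeta$, is immediate from the first half of Proposition~\ref{p.components}: any two lifts differ by an element of $H$, which preserves each component.
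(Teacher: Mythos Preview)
Your proof is correct and follows essentially the same approach as the paper: lift $\underline{\omega}$ to $\widehat H$, act on a component $\widehat\cw_{(c_j)}$, and track the change in $\phi$. The only difference is cosmetic: the paper first disposes of the trivial case $1\notin J$ (where $d_*=1$) and then works with the concrete lift $\hat h=(\omega,1,\ldots,1)$, whereas you use an arbitrary lift and push the congruence $\sum_r r a_r\equiv 1\pmod n$ down to $\Z_{d_*}$ directly, which handles all cases uniformly.
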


\begin{proof}
If $1 \notin J$, then $1 \in J^c$ so $d_* = 1$, the group $\Z_{d_*}$ is trivial, and the proposition holds.  Therefore we assume $1 \in J$. The element $\hat{h} \in \widehat{H}$
defined by 
$\hat{h} = (\omega, 1, 1, \ldots, 1)$ is a lift of $\underline{\omega} \in Z$ to $\widehat{H}$, in the sense that the map \eqref{e.hmap}
takes $\hat{h}$ to $\underline{\omega}$.  Suppose $\phi((c_j)) = r$.   The action of $Z$ on the component $\cw_{(c_j)}$ is defined by
$$
\underline{\omega} \cdot \cw_{(c_j)} = \underline{\omega} \cdot \rho(\widehat{\cw}_{(c_j)}) = \rho (\hat{h} \cdot \widehat{\cw}_{(c_j)}).
$$
We have $\hat{h} \cdot \widehat{\cw}_{(c_j)} = \widehat{\cw}_{(d_j)}$ where the tuple $(d_j)_{j\in J}$ is defined by $d_1 = c_1 + 1$
and $d_j = c_j$ for $j \neq 1$.  Thus $\underline{\omega}\cdot \cw_{(c_j)} = \cw_{(d_j)}$ where
$$
 \phi ((d_j))  =  \sum_{j\in J} j d_j = 1  + \sum_{j\in J} jc_j \equiv (r +1) \mod d_*.
$$
The result follows.
\end{proof}

If $V$ is a vector space with basis $\{\cw_r \mid r\in \Z_{d_*} \}$, and we define a representation of $Z$ on $V$ by the rule $\underline{\omega} \cdot \cw_r = \cw_{r+1}$, then the matrix of $\underline{\omega}$ with respect to this basis is 
$$
\begin{bmatrix}
0  & 0 & 0 & \cdots & 0 & 1 \\
1 & 0 & 0 & \cdots &0 & 0 \\
0 & 1 & 0 & \cdots & 0 & 0 \\
\vdots & \vdots & \vdots & \ddots &  \vdots & \vdots \\
0 & 0 & 0 & \cdots &0 & 0\\
0  & 0 & 0 & \cdots & 1 & 0
\end{bmatrix}.
$$
This matrix has eigenvalues $1, \omega^q, \omega^{2q}, \ldots, \omega^{(d_* -1) q}$, where $q = n/d_*$.
(Note that $\omega^q$ is an $d_*$-th root of $1$.)  Thus we see that $V$ decomposes under $Z$ as a direct sum
of the $1$-dimensional representations with characters $1, \chi_q, \chi_{2q}, \ldots, \chi_{(d_* -1) q}$, where
$\chi_p$ is the character of $Z$ satisfying $\chi_p(\underline{\omega}) = \omega^p$.  We have obtained the following corollary.

\begin{Cor}\label{cor.Zaction} The character of the $Z\cong \Z_n$ representation on the vector space 
spanned by the components of $\pi^{-1}(\cw_{ad})$ is $1+ \chi_q+ \chi_{2q}+ \cdots+ \chi_{(d_* -1) q}$, where $q=n/d_*$.~$\Box$
\end{Cor}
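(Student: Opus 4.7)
The plan is essentially to read off the character from the matrix already exhibited just before the corollary, which arises directly from Proposition~\ref{p.zaction}. By that proposition, the action of the generator $\underline{\omega}$ of $Z$ on the basis $\{\cw_r \mid r \in \Z_{d_*}\}$ of $V$ is the cyclic shift $\cw_r \mapsto \cw_{r+1}$, so the action of $Z \cong \Z_n$ on $V$ factors through the quotient $\Z_n \twoheadrightarrow \Z_{d_*}$ (which exists because $d_* \mid n$) and realizes the regular representation of $\Z_{d_*}$.

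First I would write $\zeta = \omega^q$ where $q = n/d_*$, and note that $\zeta$ is a primitive $d_*$-th root of unity. Next I would diagonalize the cyclic permutation matrix displayed above the corollary: a standard computation shows that for each $k \in \{0, 1, \ldots, d_*-1\}$, the vector $v_k = \sum_{r \in \Z_{d_*}} \zeta^{-kr}\, \cw_r$ is an eigenvector of $\underline{\omega}$ with eigenvalue $\zeta^k = \omega^{kq}$. Thus $V$ decomposes as a direct sum of $d_*$ one-dimensional $Z$-subrepresentations, on which $\underline{\omega}$ acts by the scalars $1, \omega^q, \omega^{2q}, \ldots, \omega^{(d_*-1)q}$ respectively.

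Finally, I would translate this into the language of characters. By the definition given at the end of Section~\ref{sec.notation}, $\chi_p$ is the character of $Z$ determined by $\chi_p(\underline{\omega}) = \omega^p$, so the one-dimensional summand on which $\underline{\omega}$ acts by $\omega^{kq}$ has character $\chi_{kq}$. Summing over $k$ gives the stated character
\[
1 + \chi_q + \chi_{2q} + \cdots + \chi_{(d_*-1)q}.
\]

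There is no real obstacle here beyond bookkeeping; the only point to be careful about is matching conventions, namely that the eigenvalues of the $d_* \times d_*$ cyclic shift matrix are exactly the $d_*$-th roots of unity (not the $n$-th roots of unity), which is why the characters appearing are $\chi_{kq}$ rather than arbitrary $\chi_p$. Since $q \cdot d_* = n$, the set $\{kq \mod n \mid 0 \leq k \leq d_*-1\}$ is exactly the kernel of reduction $\widehat{Z} \to \widehat{Z/dZ}$ dualized, so only these characters can appear in a representation that factors through $\Z_{d_*}$, as expected.
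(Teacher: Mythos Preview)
Your proposal is correct and follows essentially the same approach as the paper: the paper's argument is contained entirely in the paragraph preceding the corollary, where it writes down the cyclic permutation matrix of $\underline{\omega}$ arising from Proposition~\ref{p.zaction}, notes its eigenvalues are $1, \omega^q, \ldots, \omega^{(d_*-1)q}$, and translates these into the characters $\chi_{kq}$. You supply the explicit eigenvectors $v_k = \sum_r \zeta^{-kr}\cw_r$, which the paper omits, but otherwise the reasoning is identical.
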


We now introduce some notation which will be useful later in the paper, when we apply the results of this section.  
By definition, $\widehat{\cw} = \widehat{\cw}_{(c_j)} \cong  \C^{|I|}$ is a subvariety
of $\hat{\cv}$ depending on a tuple $(c_j)_{j\in J}$ of integers.  There is an $H_J$-equivariant isomorphism  
$\widehat{\cw}_{(c_j)} \simeq  \widehat{\cw}_{(d_j)}$,
which is the identity on the coordinates $z_{\ell}$ ($\ell \not\in J$) and which changes the value of the coordinate $z_j$ from $\omega^{c_j}$ to $\omega^{d_j}$. 
This induces an isomorphism on the quotients by $H_J$, which
we use to identify $\widehat{\cw}_{(c_j)} / H_J $ with  $\widehat{\cw}_{(d_j)} / H_J$.  
We set $\tilde{\C}^{|I|} := \widehat{\cw}_{(c_j)} / H_J$; by the remarks above, 
$\tilde{\C}^{|I|}$ is independent of the choice of tuple $(c_j)$.   Define
$i_r: \tilde{\C}^{|I|} \to \cw_r$ to be the isomorphism of Proposition \ref{prop.quotient}.  
By abuse of notation, we view $i_r$ as a map $\tilde{\C}^{|I|} \to \toric$ with image $\cw_r$.

Viewing the action of the generator $\underline{\omega}$ of $Z$ as a map from $\toric$ to itself, we restate
Proposition \ref{p.zaction} in terms of the maps $i_r$.

\begin{Prop} \label{p.zaction2} 
For all $r\in \Z_{d_*}$, we have $\underline{\omega} \circ i_r = i_{r+1}$.
\end{Prop}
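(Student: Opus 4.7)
The plan is to unwind both sides of the claimed equation $\underline{\omega} \circ i_r = i_{r+1}$ to reduce it to a direct verification on the level of the toric variety $\widehat{\toric}$, using the fact that $\rho$ is $Z$-equivariant with respect to the lift of $\underline{\omega}$ to $\hat{h} = (\omega, 1, \ldots, 1) \in \widehat{H}$ appearing in the proof of Proposition~\ref{p.zaction}.

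First, I would dispose of the trivial case $1 \notin J$: then $1 \in J^c$ forces $d_* \mid 1$, so $d_* = 1$ and $\Z_{d_*}$ is trivial, making the identity $i_r = i_{r+1}$ vacuous. Assume therefore that $1 \in J$, and fix a tuple $(c_j)_{j \in J}$ with $\phi((c_j)) = r$. Let $(d_j)_{j \in J}$ be defined by $d_1 = c_1 + 1$ and $d_j = c_j$ for $j \neq 1$, so that $\phi((d_j)) = r+1$ and $\cw_r = \rho(\widehat{\cw}_{(c_j)})$, $\cw_{r+1} = \rho(\widehat{\cw}_{(d_j)})$. Under these choices, $i_r$ is the composition of the canonical identification $\tilde{\C}^{|I|} \cong \widehat{\cw}_{(c_j)}/H_J$ with the quotient isomorphism $\widehat{\cw}_{(c_j)}/H_J \to \cw_r$ from Proposition~\ref{prop.quotient}, and similarly for $i_{r+1}$ with $(d_j)$ in place of $(c_j)$.

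The heart of the argument is the following commutativity statement at the level of $\widehat{\toric}$: the map $\hat{h}\cdot (-) : \widehat{\cw}_{(c_j)} \to \widehat{\cw}_{(d_j)}$ coincides with the canonical identification used to define $\tilde{\C}^{|I|}$. Indeed, both maps fix every coordinate $z_\ell$ for $\ell \notin J$, both fix $z_j$ for $j \in J \setminus \{1\}$ (since the corresponding $c_j = d_j$ there, and $\hat{h}$ acts trivially on those coordinates), and both send the value $z_1 = \omega^{c_1}$ to $z_1 = \omega^{c_1+1} = \omega^{d_1}$. Since both maps are $H_J$-equivariant, they induce the same map on the quotients, so the canonical identification $\widehat{\cw}_{(c_j)}/H_J = \tilde{\C}^{|I|} = \widehat{\cw}_{(d_j)}/H_J$ agrees with the map induced by $\hat{h}$.

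To conclude, given $p \in \tilde{\C}^{|I|}$, write $[\hat{p}]$ for its representative in $\widehat{\cw}_{(c_j)}/H_J$, so $i_r(p) = \rho(\hat{p})$. Because $\rho$ is $Z$-equivariant with $\underline{\omega}$ lifting to $\hat{h}$, we obtain $\underline{\omega} \cdot i_r(p) = \underline{\omega} \cdot \rho(\hat{p}) = \rho(\hat{h} \cdot \hat{p})$. By the preceding paragraph, $\hat{h} \cdot \hat{p}$ represents $p$ under the identification $\tilde{\C}^{|I|} \cong \widehat{\cw}_{(d_j)}/H_J$, so $\rho(\hat{h} \cdot \hat{p}) = i_{r+1}(p)$, as required. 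The only potential obstacle is keeping the three identifications (canonical, induced by $\hat{h}$, and the quotient isomorphism of Proposition~\ref{prop.quotient}) straight; once they are all written out in coordinates, the verification is immediate from the fact that $\hat{h}$ only perturbs the $z_1$ coordinate.
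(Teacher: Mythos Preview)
Your proof is correct and follows essentially the same approach as the paper: both lift $\underline{\omega}$ to $\hat{h} = (\omega,1,\ldots,1)$, use the tuples $(c_j)$ and $(d_j)$ from the proof of Proposition~\ref{p.zaction}, and deduce the result from the $Z$-equivariance of $\rho$. If anything, you are slightly more explicit than the paper in verifying that the map induced by $\hat{h}$ on the $H_J$-quotients coincides with the canonical identification defining $\tilde{\C}^{|I|}$ (and hence becomes the identity after those identifications), a point the paper leaves implicit in asserting that ``the proposition follows'' from the commutative diagram.
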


\begin{proof}
Suppose $\phi((c_j)) = r$.  Let $\hat{h}$ and $(d_j)$ be as in the proof of Proposition \ref{p.zaction}.  
The action on $\hat{h}$ on $\hat{\cv}$ induces an isomorphism $\widehat{\cw}_{(c_j)} /H_J \xrightarrow{\; \simeq \;}  \widehat{\cw}_{(d_j)} /H_J$.
We obtain the diagram
\begin{equation} \label{e.commdiag-zaction2}
\begin{CD}
\tilde{\C}^{|I|} = \widehat{\cw}_{(c_j)} /H_J @>{i_r}>>   \toric  \\
@V\hat{h}VV                             @V{\underline{\omega}}VV\\
\tilde{\C}^{|I|} =  \widehat{\cw}_{(d_j)}/H_J @>{i_{r+1}}>>    \toric .
\end{CD}
\end{equation}
In this diagram, we have identified both $\widehat{\cw}_{(c_j)} /H_J$ and $\widehat{\cw}_{(c_j)} /H_J$ with $\tilde{\C}^{|I|}$ as above.
The proof of Proposition \ref{p.zaction} shows that the action of $\underline{\omega}$ on $\toric$ is induced by the action
 of $\hat{h}$ on $\hat{\toric}$, and that this action takes $\cw_r$ to $\cw_{r+1}$.
Since by definition $i_r$ and $i_{r+1}$ are the isomorphisms
 of $\widehat{\cw}_{(c_j)} /H_J$ and $\widehat{\cw}_{(d_j)}/H_J$ with $\cw_r$ and $\cw_{r+1}$,
 respectively, the diagram commutes.  The proposition follows.
\end{proof}

Let
$q:  \tilde{\C}^{|I|} \to \C^{|I|}$ be the map making the following diagram commute: 
\begin{equation} \label{e.commdiag}
\begin{CD}
 \tilde{\C}^{|I|} @>{i_r}>>   \cw_r  \\
 @VqVV                             @V{\pi}VV\\
 \C^{|I|} @>i>>    \cw_{\ad} .
\end{CD}
\end{equation}
Note that $q$ is uniquely defined since the horizontal maps
are isomorphisms.

\subsection{Rescaling Argument}

Our study of the components of $\pi^{-1}(\cw_{ad})$ above utilizes the action of groups $H$ and $\hat{H}$ on $\rho^{-1}(\cw_{ad})$.  One can also study the subvarieties $\cw_{(c_j)} = \rho(\hat\cw_{(c_j)})$ directly, using coordinates on $\toric$ and $\hat{\toric}$.  This provides an alternative approach
to some of the results in Section \ref{ss.componentsZ}.  In this subsection we give an example illustrating this concrete, albeit technical, approach.  The results of this subsection are not
needed for the remainder of the paper.

\begin{example} Let $n=12$ and consider the set partition of $[11]$ defined by $I = \{10\}$, $K=\{4,8\}$, and $J = [11]\setminus \{4,8,10\}$.  We have $d_* = \gcd(4,8,10,12)=2$.
The results earlier in this section tell us that there are precisely two components of $\pi^{-1}(\cw_{ad})$, each indexed by an element of $\Z_2=\{0,1\}$.  

To see this directly using coordinates, recall that $\C[\toric]$ is a quotient of the ring $\C[v_1,v_2, \ldots, v_{11}, x_1, x_2,\cdots x_{11}]$, where $v_i:=e^{\mu_i}$.  Here,
$\mu_i = \sum c_j \alpha_j$ is the unique weight in the same coset of $\lambda_i$ modulo the root lattice 
which satisfies $0 \leq c_i < 1$ for each $i$.  For example, using~\eqref{e.glk}, we see that
\[
12\lambda_4 = 8\alpha_1 + 16 \alpha_2 + 24 \alpha_3+32 \alpha_4+28\alpha_5+24\alpha_6+20\alpha_7+16\alpha_8+9\alpha_9+6\alpha_{10}+3\alpha_{11}
\]
and thus
\[
\mu_4 = \frac{1}{3}\left(2\alpha_1 + \alpha_2+2\alpha_4+\alpha_5+2\alpha_7+\alpha_8+2\alpha_{10}+\alpha_{11} \right);
\]
see~\cite[Ex.~3.8]{GPR}.  Note that we can view each $v_i$ as a function on $\hat{\toric}$ via pullback
(cf.~\eqref{e.inclusions}), and we adopt that convention for the rest of this section.  Since $n=12$, we have $x_i = e^{\alpha_i} = z_i^{12}$, so
the equation above for $\mu_4$ tells us that, as a function on $\hat{\toric}$, $v_4 = z_1^8z_2^4z_4^8z_5^4 z_7^8z_8^4 z_{10}^8 z_{11}^4$.  Similar computations show the following.
\begin{align*}
v_1 &= z_1^{11}z_2^{10}z_3^{9}z_4^8z_5^{7}z_6^{6}z_7^{5}z_8^{4}z_9^{3}z_{10}^{2}z_{11}^{\empty}  & v_7 &=  z_1^{5}z_2^{10}z_3^{3}z_4^8z_5^{1}z_6^{6}z_7^{11}z_8^{4}z_9^{9}z_{10}^{2}z_{11}^{7} \\
v_2 &= z_1^{10}z_2^{8}z_3^{6}z_4^{4}z_5^{2} z_7^{10}z_8^{8}z_9^{6}z_{10}^{4}z_{11}^{2}  & v_8&= z_1^4z_2^8z_4^4z_5^8 z_7^4z_8^8 z_{10}^4 z_{11}^8 \\
v_3 &= z_1^{9}z_2^{6}z_3^{3} z_5^{9} z_6^{6}z_7^{3} z_9^{9}z_{10}^{6}z_{11}^{3} & v_9&= z_1^{3}z_2^{6}z_3^{9} z_5^{3} z_6^{6}z_7^{9} z_9^{3}z_{10}^{6}z_{11}^{9}   \\
v_4 &=z_1^8z_2^4z_4^8z_5^4 z_7^8z_8^4 z_{10}^8 z_{11}^4 & v_{10}&= z_1^{2}z_2^{4}z_3^{6}z_4^{8}z_5^{10} z_7^{2}z_8^{4}z_9^{6}z_{10}^{8}z_{11}^{10}  \\
v_5 &= z_1^{7}z_2^{2}z_3^{9}z_4^{4}z_5^{11}z_6^{6}z_7^{\empty}z_8^{8}z_9^{3}z_{10}^{10}z_{11}^{5} & v_{11}&= z_1^{\empty}z_2^{2}z_3^{3}z_4^4 z_5^{5}z_6^{6}z_7^{7}z_8^{8}z_9^{9}z_{10}^{10}z_{11}^{11}  \\
v_6 &= z_1^{6}z_3^{6}z_5^{6}z_7^{6}z_9^{6}z_{11}^{6}
\end{align*}
\end{example}
For each tuple $(c_j)_{j\in J}$, the affine variety $\hat{\cw}_{(c_j)}$ in $\toric$ has defining ideal $\ci(\hat{\cw}_{(c_j)}) = \left< z_j-\omega^{c_j} \mid j\in J \right>+ \left< z_{4}, z_8 \right>$.  Thus $\ci(\hat{\cw}_{(c_j)})$ contains $v_1$, $v_2$, $v_4$, $v_5$, $v_7$, $v_8$, $v_{10}$, $v_{11}$, as each of these has $z_4$ and $z_8$ as factors. (In general, $z_k$ with $k\in K$ will appear as a factor of $v_i$ if and only if $i$ is not divisible by $\frac{n}{\gcd(k,n)}$; see~\cite[Lemma 3.10]{GPR}.)  
In $\C[\hat{\cw}_{(c_j)}]$, we have $z_j = \omega^{c_j}$ for all $j\in J$, so 
\[
v_3 = \omega^{3\Theta}z_{10}^6, \quad v_6 = \omega^{6\Sigma}, \quad v_9 = \omega^{9\Theta}z_{10}^6
\]
where $\Theta = 3c_1+2c_2+c_3+3c_5+2c_6+c_7+3c_9+ c_{11}$ and $\Sigma = c_1+c_3+c_5+c_7+c_9+c_{11}$.  
The equation for $v_6$ implies that the $v_6$-coordinate of any point $p \in \cw_{(c_j)}$ is a constant depending only on the parity of $\Sigma$.  In particular,  
\[
v_6(p) = \left\{\begin{array}{lc} 1  & \textup{ if $\Sigma$ is even} \\ \omega^6=-1 & \textup{ if $\Sigma$ is odd} \end{array} \right.
\]
for all $p\in \cw_{(c_j)}$.  This shows $\pi^{-1}(\cw_{ad})$ has at least two components, each determined by the parity of the sum $\Sigma$ for a given tuple $(c_j)_{j\in J}$.

To prove that there are precisely two components, one must show that the $v_6$-coordinate uniquely determines the image of $\hat{\cw}_{(c_j)}$ under $\rho$.  Indeed, suppose $(c_j')_{j\in J}$ is another tuple such that $\Sigma'=c_1'+c_3'+c_5'+c_7'+c_9'+c_{11}'$ has the same parity as $\Sigma$.  Set $\Theta'=3c_1'+2c_2'+c_3'+3c_5'+2c_6'+c_7'+3c_9'+ c_{11}'$.  Observe that the maps
$\hat{\cw}_{(c_j)} \to \C$, $p \mapsto z_{10}(p)$, and $\hat{\cw}_{(c_j')} \to \C$, $p \mapsto z_{10}(p')$, are isomorphisms.
We obtain an isomorphism $\hat{\cw}_{(c_j)} \to \hat{\cw}_{(c_j')}$,
 $p \mapsto p'$, characterized by the relation $z_{10}(p) =  z_{10}(p')$. 
Any $p\in \hat{\cw}_{(c_j)}$ with $z_{10}(p)=b_{10}$ satisfies 
\[
v_3(p) = \omega^{3\Theta}b_{10}^6 \quad \textup{ and } \quad v_9(p)=\omega^{9\Theta} b_{10}^6
\]
while $p'\in \hat{\cw}_{(c_j')}$ such that $z_{10}(p')=b_{10}$ satisfies
\[
v_3(p') = \omega^{3\Theta'}b_{10}^6 \quad \textup{ and } \quad v_9(p')=\omega^{9\Theta'} b_{10}^6.
\]
Note that $\Sigma$ and $\Sigma'$ have the same parity if and only if the same is true of $\Theta$ and $\Theta'$ so we may assume $\Theta-\Theta'$ is even.
To show that $\cw_{(c_j)} = \cw_{(c_j')}$ consider the ``rescaling map'' 
\begin{eqnarray}\label{eqn.rescaling}
\tau: \C  \to \C;\; b_{10} \mapsto \omega^{\frac{1}{2}\left( \Theta-\Theta' \right)}b_{10}.
\end{eqnarray}
Using the isomorphism $\C\cong \hat{\cw}_{(c_j')}$, we view the rescaling map as an automorphism of $\hat{\cw}_{(c_j')}$.
Under this rescaling, for all $p'\in \hat{\cw}_{(c_j')}$, we obtain
\[
v_3\circ \tau(p') =  \omega^{3\Theta'}\omega^{3\left( \Theta - \Theta' \right)}b_{10}^6 = \omega^{3\Theta}b_{10}^6 = v_3(p).
\]
Similarly, $v_9 \circ \tau(p')= v_9(p)$.  Hence the images of $\hat{\cw}_{(c_j)}$ and $\hat{\cw}_{(c_j')}$ are equal under $\rho$, as desired.

\

The program outlined in the example can be carried out more generally to prove the results of Proposition~\ref{p.components} and Corollary~\ref{cor.Zaction} using the functions $z_i$ and $v_i$ on $\hat{\toric}$.  The key point is that, when viewed as a function on $\hat{\toric}$ via pull-back, the $v_{{n}/{d_*}}$-coordinate of any point in $\hat{\cw}_{(c_j)}$ will be an element of $\Z_{d_*} \cong \{ \omega^{n/d_*}, \omega^{2n/d_*}, \ldots, 1 \}$.  It is then possible to define a ``rescaling map'' as in~\eqref{eqn.rescaling} in order to prove
that the image $\rho(\hat{\cw}_{(c_j)})=\cw_{(c_j)}$ is uniquely determined by the value of its $v_{n/d_*}$-coordinate, thereby recovering Proposition~\ref{p.components}.  Finally, the $Z$-action on the components of $\pi^{-1}(\cw_{ad})$ is determined by the $Z$-action on the $v_{n/d_*}$-coordinate and given explicitly by multiplication by $\omega^{n/d_*}$, so we recover the results of Corollary~\ref{cor.Zaction}.


\section{Affine Pavings} \label{sec.paving}

Our goal is to prove that each extended Springer fiber has an affine paving modulo a finite group action. 
We begin with some combinatorial data.

\subsection{Row-Strict tableaux} \label{ss.row-strict}

In the previous section, our analysis relied on disjoint subsets $I$, $J$, and $K$ partitioning the set $[n-1] =\{1,2,\dots, n-1\}$.  In this section, we obtain these sets from the row-strict tableaux which parametrize pieces of the affine paving for each Springer fiber.

Fix a positive integer $n$ and a partition $\lambda$ of $n$. The Young diagram of shape $\lambda$ is a collection of boxes arranged into rows and columns corresponding to the parts in the partition $\lambda$.  We orient our diagrams using the English convention, so the rows are left justified with sizes decreasing from top to bottom.  A tableau of shape $\lambda$ and content $[n]$ is obtained by filling each box with one of the numbers $1$ through $n$, with no repetition.  The \textit{base filling} of $\lambda$ is the tableau obtained by filling the boxes of $\lambda$ with the the numbers $1$ through $n$ by
starting at the bottom of the leftmost column and moving up the column by increments of $1$, then moving to the lowest
box of the next column to the right, and so on.  For example, the base filling of $\lambda = [4\ 3\ 1]$ is the following.
\[\ytableausetup{centertableaux} \begin{ytableau}3 & 5 & 7 & 8\\ 2 & 4 & 6\\ 1\end{ytableau}
\]

Tableaux such that the entries increase across each row are called \textit{row-strict}. Denote the set of row-strict tableaux of shape $\lambda$ by $\RST(\lambda)$.

\begin{figure}[h]
\begin{ytableau}3 & 4 & 5 & 6\\ 1 & 2 & 7\\ 8\end{ytableau}\quad\quad \begin{ytableau}1 & 2 & 3 & 4\\ 5 & 6 & 7\\ 8\end{ytableau}\quad\quad \begin{ytableau}5 & 6 & 7 & 8\\ 1 & 2 & 4\\ 3\end{ytableau}

\caption{Let $n=8$. Examples of row-strict tableaux of shape $\lambda = [4\ 3\ 1]$.}\label{fig.tableaux}
\end{figure}

Let $\sigma \in \RST(\lambda)$.  We define subsets $I_\sigma$, $J_\sigma$, and $K_\sigma$ of $[n-1]$ as follows:
\begin{eqnarray*}
I_\sigma &:=& \left\{ i\in [n-1] \mid \begin{array}{c} \textup{if $i+1$ is in the column directly to the right of $i$} \\ \textup{and in any higher row or $i+1$ is in}\\ \textup{any column at least two to the right of $i$ }  \end{array} \right\} \\
J_\sigma &:=& \{i\in [n-1]\mid i+1 \text{ is in the same row as $i$}\}\\
K_\sigma &:=& [n-1] \setminus (I_\sigma \sqcup J_\sigma).
\end{eqnarray*}
If $i \in I_\sigma$, write $\ell(i+1)$ for the entry to the left of $i+1$.  If $\sigma$ is fixed, we may omit the subscript $\sigma$ and simply denote these sets by
$I, J$ and $K$.

By definition, the position of $i+1$ in $\sigma$ determines which subset contains $i$.  The diagram in Figure~\ref{fig.subsets} below illustrates this definition. 

\begin{figure}[h]
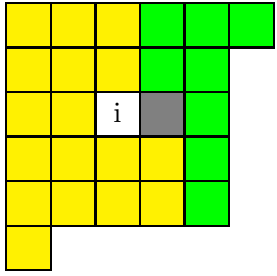

\ydiagram[*(white) $i$]
  {0,0,2+1,0,0}
  *[*(gray)]{0,0,3+1,0,0,0}
  *[*(yellow) ]{3,3,4,4,4,1}
*[*(green)]{6,5,5,5,5,1}
\caption{If $i+1$ is in one of the green boxes, then $i\in I_\sigma$.  If $i+1$ is in the gray box, then $i\in J_\sigma$.  If $i+1$ is in one of the yellow boxes, then $i\in K_\sigma$.}\label{fig.subsets}
\end{figure}

\begin{example}\label{ex.IJK}
Let $n=12$ and $\lambda = [4^2\ 2^2]$. Let $\sigma\in \RST([4^2\ 2^2])$ be the tableau pictured below.
\[\ytableausetup{centertableaux} \begin{ytableau}3 & 4 & 5 & 6\\ 1 & 2 & 9 & 10\\ 7 & 8\\ 11 & 12\end{ytableau}
\]
Then $I_\sigma = \{8\}$, $J_\sigma = \{1,3,4,5,7,9,11\}$, and $K_\sigma = \{2,6,10\}$.
\end{example}

\begin{Def} \label{def.Springer.inv} Let $i,j\in [n]$ such that $i>j$.  We say that the pair $(i,j)$ is a \textit{Springer inversion} of $\sigma\in \RST(\lambda)$ if 
\begin{enumerate}
\item $i$ occurs in a box below $j$ and in the same column or in any column strictly to the left of the column containing $j$ in $\sigma$, and
\item if the box directly to the right of $j$ in $\sigma$ is labeled by $r$, then $i< r$.
\end{enumerate}
Let $|\sigma|$ be the number of Springer inversions of $\sigma\in \RST(\lambda)$.  We denote the set of Springer inversions by $\inv(\sigma)$, so $|\sigma| := |\inv(\sigma)|$.  
\end{Def}
Note that if $i\in I_\sigma$, then $(i,\ell)$ is a Springer inversion, where $\ell = \ell(i+1)$ denotes the entry in the box of $\sigma$ directly to the left of the box containing $i+1$.  For example, in Example \ref{ex.IJK}, $(8,2)$ is a Springer inversion.  Let $\tilde{I}_{\sigma}$ denote the set of inversions of the form $(i, \ell(i+1))$, for $i \in I_{\sigma}$.  Since $\tilde{I}_{\sigma} \subset \inv(\sigma)$, we have $|I_{\sigma}| = |\tilde{I}_\sigma| \leq |\sigma|$.

\begin{example}\label{ex.inversions} If $\sigma$ is the row-strict tableau of shape $[4^2\ 2^2]$ appearing in Example~\ref{ex.IJK},
we have $I_{\sigma} = \{ 8 \}$ and, as noted above, $(8, 2)\in \inv(\sigma)$.  The interested reader can check that 
\begin{eqnarray*}
\inv(\sigma) &=& \{ (12, 8), (12,10), (12, 6), (11,8), (11, 10), (11, 6),\\
&&\quad \quad\quad\quad (10, 6), (9,6), (8, 2), (8,6) (7,2), (7,6), (3,2) \}
\end{eqnarray*}
so $|\sigma|=13$ in this case.  
\end{example}

\begin{Rem}
The Springer inversions form a subset of the inversion set of a particular permutation $w_{\sigma}$. 
Here, our convention is that the inversion set of a permutation $w$ is the set of pairs $(i,j)$ with $i>j$ such that $w(i)<w(j)$ (that is, in
the $1$-line notation for $w^{-1}$, the number $i$ occurs before $j$).  Define $w_{\sigma}$
to be the permutation of $[n]$ such that if $p$ occurs in the box of $\sigma$ in which $q$ occurs in the base 
filling of $\lambda$, then $w_{\sigma}(p) = q$.   In other words, order the boxes of $\sigma$ as in the base filling,
and list the entries in $\sigma$ in the order in which the boxes occur.  This list is the $1$-line notation
for $w_{\sigma}^{-1}$.  For example, if $\sigma$ is as in Example \ref{ex.IJK}, 
then $w_{\sigma}^{-1} = [11,7,1,3,12,8,2,4,9,5,6,10]$.
The pairs $(i,j)$ satisfying condition (1) in Definition \ref{def.Springer.inv}
 are the inversions of $w_{\sigma}^{-1}$, so the Springer inversions form a subset of these.
  In this example, $w_{\sigma}^{-1}$ has $28$ inversions, but there are only $13$ Springer inversions.  
  The permutations $w_{\sigma}$ are related to the paving of a Springer fiber discussed below.  Indeed,
the pieces of the paving are obtained by intersecting the Springer
fiber with the Schubert cells indexed by the 
$w_{\sigma}$; see~\cite[Theorem 5.4]{Ji-Precup2022}.
\end{Rem}

\begin{Def}\label{def.divisor} Given a tableau $\sigma\in \RST(\gl)$, a \textit{block} in $\sigma$ is a set of consecutive boxes in a single row of $\sigma$ labeled by consecutive numbers.  If $\sigma$ can be broken up into blocks of size $d\in \N$ then we say that \textit{$d$ is a divisor of $\sigma$}.  For each $\sigma\in \RST(\gl)$ we let $d_\sigma$ denote its maximal divisor.
\end{Def}

The tableau pictured in Example~\ref{ex.IJK} has maximal divisor equal to $2$.

\begin{Lem}\label{lem.maxdiv} For each $\sigma\in \RST(\gl)$, 
\[
d_\sigma = \gcd(i \mid i \in K_\sigma \sqcup I_\sigma \sqcup \{ n \}).
\]
\end{Lem}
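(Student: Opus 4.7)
The plan is to prove the lemma by first establishing the following intermediate characterization: for any positive integer $d$, the integer $d$ is a divisor of $\sigma$ in the sense of Definition~\ref{def.divisor} if and only if $d$ divides every element of $I_\sigma \cup K_\sigma \cup \{n\}$. Once this equivalence is shown, the lemma is immediate: the set of divisors of $\sigma$ coincides with the set of common divisors of $I_\sigma \cup K_\sigma \cup \{n\}$, whose maximum is exactly $\gcd(I_\sigma \cup K_\sigma \cup \{n\})$.

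For the forward implication, I would suppose $\sigma$ is partitioned into blocks $\{kd+1, kd+2, \ldots, (k+1)d\}$ for $k = 0, 1, \ldots, n/d - 1$. In particular $d \mid n$. For any index $j \in [n-1]$ with $d \nmid j$, write $j = kd + r$ with $1 \leq r \leq d-1$; then $j$ and $j+1$ both lie in the single block indexed by $k$, so they share a row, whence $j \in J_\sigma$. Thus every element of $I_\sigma \cup K_\sigma = [n-1] \setminus J_\sigma$ is a multiple of $d$, as desired.

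For the converse, suppose $d \mid n$ and $d$ divides every element of $I_\sigma \cup K_\sigma$. I would show that the intervals $\{kd+1, \ldots, (k+1)d\}$ genuinely form blocks in the sense of Definition~\ref{def.divisor}. Within any such interval, each intermediate index $kd+r$ (for $1 \leq r < d$) is a non-multiple of $d$, so by hypothesis it lies in $J_\sigma$, meaning $kd+r$ and $kd+r+1$ lie in the same row of $\sigma$. By transitivity, all $d$ entries $kd+1, \ldots, (k+1)d$ lie in a single row of $\sigma$. The remaining observation is that, because $\sigma$ is row-strict, these $d$ consecutive integers must occupy $d$ consecutive boxes of that row: any entry $m$ appearing strictly between two of them in the row would satisfy $kd+r < m < kd+r+1$ for some $r$, which is impossible for an integer. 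Hence the interval is a legitimate block.

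The only real step requiring attention is this last observation in the converse direction; everything else is straightforward bookkeeping from the definitions of $I_\sigma$, $J_\sigma$, $K_\sigma$ together with the fact that $[n-1] = J_\sigma \sqcup I_\sigma \sqcup K_\sigma$. I do not anticipate any substantive obstacle, and the proof should be short.
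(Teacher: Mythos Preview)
Your proposal is correct and follows essentially the same approach as the paper: both arguments hinge on the observation that $I_\sigma \cup K_\sigma$ records precisely the indices $i$ at which $i$ and $i+1$ fail to be adjacent in a row, so these indices mark the block boundaries. The paper phrases this by listing $I_\sigma \cup K_\sigma = \{n_1, \ldots, n_r\}$, reading off the maximal block lengths $n_1, n_2-n_1, \ldots, n-n_r$, and then invoking the identity $\gcd(n_1, n_2-n_1, \ldots, n-n_r) = \gcd(n_1, \ldots, n_r, n)$; your version bypasses the maximal blocks and that gcd identity by proving directly that $d$ is a divisor of $\sigma$ if and only if $d$ divides every element of $I_\sigma \cup K_\sigma \cup \{n\}$, which is a slightly cleaner route to the same conclusion.
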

\begin{proof} 
Suppose that 
$$K_\sigma \sqcup I_\sigma = \{ n_1, n_2, \ldots, n_r \}.$$
By definition of $J_\sigma$, the tableau $\sigma$ is then composed of blocks as follows: the first block is  $1, \ldots, n_1$, the second block is
$n_1 + 1, \ldots, n_2$, etc.  The lengths of the blocks are $n_1, n_2 -n_1, \ldots, n_r - n_{r-1}, n - n_r $.
The maximal divisor $d_\sigma$ is the greatest common divisor of the lengths of the blocks, which equals
$\gcd( n_1, n_2, \ldots, n_r, n )$.
\end{proof}

\begin{example}\label{ex.642} Let $n=12$ and consider the row-strict tableau $\sigma\in \RST([6 \ 4 \ 2])$ below. 
\[
\ytableausetup{centertableaux} \begin{ytableau}1 & 2 & 3 & 4 & 11 & 12\\ 5 & 6 & 7 & 8\\ 9 & 10\end{ytableau}
\]
We have $I_\sigma = \{ 10 \}$ and $J_\sigma = \{ 1, 2, 3, 5, 6, 7, 9, 11 \}$.  Now $I_\sigma \sqcup K_\sigma \sqcup \{12\} = \{ 4, 8, 10, 12 \}$ and thus $d_\sigma =2$ by Lemma~\ref{lem.maxdiv}, which is also obvious from the picture of $\sigma$ above.
\end{example}


\subsection{A paving of the Springer fiber}

The fact that Springer fibers are paved by affines was originally proved by Spaltenstein in~\cite{Spaltenstein}.  
Building on work of Tymoczko~\cite{Tymoczko2006}, Ji and the second author construct pavings of Springer fibers in~\cite{Ji-Precup2022} suitable
for use in this paper.  We will use their results, along with the results of Section \ref{sec.affine-subvarieties},
 to construct pavings of extended Springer fibers.

Recall that $\RST(\lambda)$ is the set of all row-strict tableaux of shape $\lambda$.  As in~\cite[Definition 3.1]{Ji-Precup2022},
define the element $\mx_\gl$ of $\co_\lambda$ to be $\mx_\gl = \sum E_{\ell r}$, where the sum is over all pairs $(\ell,r)$ such that 
$r$ labels the box directly to the right of $\ell$ in the base filling of $\lambda$.  
Recall from Section~\ref{sec.Springer.def} that $\spr{\gl}:= \spr{\mx_\gl}=\mu^{-1}(\mx_\gl)$ denotes the Springer fiber of $\mx_{\gl}$.
The following theorem is a combination of results from~\cite{Ji-Precup2022}.

\begin{Thm}\label{thm.paving} Let $\lambda$ be a partition of $n$.  For each $\sigma\in \RST(\gl)$ there exists a  morphism 
\begin{eqnarray}\label{eqn.pavingmap}
f_\sigma: \C^{|\sigma|} \to G
\end{eqnarray}
such that the composition $\C^{|\sigma|} \to G \to G/B = \cb$  is an isomorphism 
onto its image, denoted here by $\cc_\sigma \subseteq \cb_{\gl}$. Moreover, the $\cc_\sigma$ with $\sigma\in \RST(\gl)$ are cells for an affine paving of $\cb_{\gl}$.
\end{Thm}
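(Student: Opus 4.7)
Since the theorem is attributed to~\cite{Ji-Precup2022}, my plan is to sketch how one would reconstruct the key steps. The strategy is to realize each cell $\cc_\sigma$ as the intersection of $\cb_\gl$ with a specific Schubert cell, and then to explicitly parametrize that intersection by root subgroups indexed by Springer inversions. To each $\sigma \in \RST(\gl)$ I would first associate the permutation $w_\sigma \in S_n$ described in the Remark: $w_\sigma^{-1}$ reads the entries of $\sigma$ in the order prescribed by the base filling. The Schubert cell $X_{w_\sigma} = B w_\sigma B/B$ then admits the standard parametrization by $\prod_{\alpha \in \Phi^+,\, w_\sigma^{-1}\alpha < 0} U_\alpha \cong \C^{\ell(w_\sigma)}$, where the product is taken in any fixed order of the roots.

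Next I would study the intersection $\cc_\sigma := \cb_\gl \cap X_{w_\sigma}$. Writing a typical point as $u w_\sigma B$ with $u = \prod u_\alpha(t_\alpha)$, the defining condition $u w_\sigma B \in \cb_\gl$ reads $w_\sigma^{-1} u^{-1} \cdot \mx_\gl \in \fu$. Expanding $\Ad(u^{-1})\mx_\gl$ in the basis $\{ E_{i,j} \}$ produces a polynomial system in the variables $t_\alpha$. The main work is to show that this system is triangular in an appropriate ordering of the positive roots: each equation either determines some $t_\alpha$ uniquely in terms of earlier variables (often forcing $t_\alpha = 0$), or is automatically satisfied. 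The free variables that survive this reduction should be in bijection with $\inv(\sigma)$, yielding a morphism $f_\sigma : \C^{|\sigma|} \to G$ whose composition with $G \to \cb$ is an isomorphism onto $\cc_\sigma$.

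The hard part is matching the combinatorics of Springer inversions to the algebra of the adjoint action. The two conditions in Definition~\ref{def.Springer.inv} track features of $\mx_\gl = \sum E_{\ell, r}$ quite precisely: condition~(1) selects pairs whose associated root subgroup, after conjugation by $w_\sigma$, can contribute a genuinely new parameter; and the bound $i < r$ in condition~(2) is exactly what prevents the contribution from being absorbed by a previously normalized variable. Verifying this dimension-matching amounts to an explicit but bookkeeping-heavy computation, carried out in the cited work by intersecting with the Schubert cells and organizing the resulting equations by a suitable weight order, in the spirit of Spaltenstein~\cite{Spaltenstein1977} and Tymoczko~\cite{Tymoczko2006}.

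Finally, to establish the paving property, two things are needed. First, the decomposition $\cb_\gl = \bigsqcup_\sigma \cc_\sigma$, which follows from disjointness of Bruhat cells together with the fact that $\cb_\gl \cap X_w$ is empty unless $w = w_\sigma$ for some $\sigma \in \RST(\gl)$. Second, a total order $\prec$ on $\RST(\gl)$ such that $\overline{\cc_\sigma} \subseteq \bigsqcup_{\tau \preceq \sigma} \cc_\tau$; refining the length order on $\{ w_\sigma \}$ will do. The resulting filtration by closed subvarieties is the required affine paving, and each stratum $\cc_\sigma$ is affine of dimension $|\sigma|$ by the parametrization above.
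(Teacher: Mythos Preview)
The paper does not give its own proof of this theorem; it is stated as ``a combination of results from~\cite{Ji-Precup2022}'' and simply cited.  So there is no in-paper argument to compare your proposal against.  That said, your outline is consistent with what the paper indicates about the Ji--Precup construction: the Remark following Definition~\ref{def.Springer.inv} confirms that the cells $\cc_\sigma$ are obtained by intersecting $\cb_\gl$ with the Schubert cells indexed by the permutations $w_\sigma$, exactly as you propose.

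One small point of comparison.  From the proof of Proposition~\ref{prop.toric-paving} (which quotes \cite[Lemma~4.6, Proposition~5.6]{Ji-Precup2022}), the map $f_\sigma$ is not presented there as a product of root subgroups but is instead characterized recursively by the relations
\[
f_\sigma(x)e_\ell = \mx_\lambda f_{\sigma}(x) e_r + \sum_{\substack{\ell<t\leq n\\ (t,\ell)\in \inv(\sigma)}} x_{(t,\ell)} f_\sigma(x)e_{t},
\]
with $\ell = \ell(r)$, together with $\mx_\lambda f_\sigma(x) e_r = 0$ when $r$ labels a first-column box.  This column-by-column description is what the present paper actually uses downstream (in particular to compute $p \circ g_\sigma$).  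Your Schubert-cell/root-subgroup picture and this recursive description are two ways of packaging the same parametrization, but if you want to interface with the rest of the paper you should be aware that the latter is the form invoked here.  Beyond that, your sketch is a faithful summary of the cited argument, with the honest caveat that the triangularity/bookkeeping step is where the real work lies.
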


\begin{Rem} \label{rem:dimension}
The irreducible components of $\spr{\gl}$ are the $\overline{\cc_\sigma}$ where $\sigma$ is a standard tableau.
Thus, if $\sigma$ is standard, then $|\inv(\sigma)| = \sum_i (\lambda_i - 1) = \dim \spr{\gl}$. 
  \end{Rem}

We now adapt the map~\eqref{eqn.pavingmap} to identify $\cc_\sigma$ as a subset of $\spr{\gl}$ in  $G \times^B \fu$.  For each $\sigma\in \RST(\lambda)$, the results of \cite{Ji-Precup2022} imply that $f_\sigma(x)^{-1} \cdot \mx_{\gl} \in \fu$.  This is implicit in the statement of
Theorem~\ref{thm.paving}; it is explained more explicitly in the proof of Proposition
\ref{prop.toric-paving} below.  We
 define $g_{\sigma}: \C^{|\sigma|} \to \fu$ by $g_{\sigma}(x) = f_\sigma(-x)^{-1} \cdot \mx_{\gl}$.
By abuse of notation, we use the same notation
$\cc_\sigma$ as above to denote the image of the map 
$$
F_\sigma: \C^{|\sigma|} \to G \times^B \fu,\; x \mapsto [f_\sigma(x), g_{\sigma}(x) ].
$$
Theorem~\ref{thm.paving} tells us that the $\cc_\sigma$ are cells for an affine paving of $\spr{\gl}$.  

We will need a result, Proposition \ref{prop.toric-paving} below, giving more information about the
map $g_{\sigma}$.  Before stating that proposition, we
introduce some notation.
We view the set of all pairs $(i,j) \in \inv(\sigma)$ as the indexing set
for a basis of $\C^{|\sigma|}$.  Let $\{ x_{(i,j)} \mid (i,j) \in \inv(\sigma) \}$ be coordinates with respect to this basis, and write
$x = (x_{(i,j)})_{(i,j) \in \inv(\sigma)}$ for an element of $\C^{|\sigma|}$.  Let $I = I_{\sigma}$, $J = J_{\sigma}$, and $K = K_{\sigma}$ denote
the subsets of $[n-1]$ defined using $\sigma$ in Section \ref{ss.row-strict}.  We identify $\C^{|I|}$ with the subspace of $\C^{|\sigma|}$ such that
such that all coordinates $x_{(i,j)}$ are zero for $(i,j) \notin \tilde{I}_{\sigma}$.  Similarly, $\C^{|\sigma|-|I|}$ is the subspace defined by the vanishing of the
complementary set of coordinates.  We obtain a decomposition $\C^{|\sigma|} = \C^{|I|} \oplus \C^{|\sigma|-|I|}$
of $\C^{|\sigma|}$ into a direct sum of complementary subspaces.
Let $\pi_I$ denote projection onto the first summand.
 
Recall that $p: \fu \to \toric_{ad}$ is the projection obtained by identifying $\toric_{ad}$ with $\fu/[\fu, \fu]$.   
Let $\cw_{\sigma, ad}$ be the subvariety of $\toric_{ad}$ defined as in Section \ref{ss.aff-mod-finite} by
the equations $x_j = 1$, $x_k = 0$, for $j \in J_{\sigma}$ and $k \in K_{\sigma}$.  The next result describes
the composition $p \circ g_{\sigma}:  \C^{|\sigma|} \to \toric_{ad}$, and shows that the image of this map is~$\cw_{\sigma, ad}$.

\begin{Prop} \label{prop.toric-paving} Let $\sigma\in \RST(\gl)$.  We have $p \circ g_{\sigma} = p  \circ g_{\sigma}  \circ \pi_I$.  Moreover, the
map $p \circ g_{\sigma} |_{\C^{| I |} } \to \toric_{ad}$ is 
given by the formula 
\begin{equation} \label{e.toric-paving}
(x_{(i,\ell(i+1))})_{i \in I_{\sigma}} \mapsto \sum_{i \in I_{\sigma}} x_{(i,\ell(i+1))} E_{i,i+1} + \sum_{i \in J_{\sigma}} E_{i,i+1}.
\end{equation}
Hence $p \circ g_{\sigma} |_{\C^{| I |} } $ is the linear isomorphism~\eqref{eqn.affine.iso} from $\C^{| I |}$ to $\cw_{\sigma, ad}$.
\end{Prop}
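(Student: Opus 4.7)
The plan is to leverage the identification $\toric_{ad} \cong \fu/[\fu,\fu]$, under which the projection $p$ sends any $A \in \fu$ to the sum $\sum_{i=1}^{n-1} A_{i,i+1} E_{i,i+1}$ of its superdiagonal entries. Proving the proposition then reduces to computing the coefficient of $E_{i,i+1}$ in $g_\sigma(x) = f_\sigma(-x)^{-1} \cdot \mx_\lambda$ for each $i \in [n-1]$, and showing that this coefficient depends only on the coordinates $x_{(i,\ell(i+1))}$ with $i \in I_\sigma$.

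First, I would unpack the explicit description of $f_\sigma$ from~\cite{Ji-Precup2022}: it factors as a product (in a prescribed order) of root-subgroup elements parametrized by the Springer inversions $(i,j) \in \inv(\sigma)$, composed with a fixed permutation representative $w_\sigma$ that sends the base filling of $\lambda$ onto $\sigma$. The element $w_\sigma \cdot \mx_\lambda$ is then the nilpotent whose nonzero entries are $1$'s in positions $(p,q)$ corresponding to horizontal adjacencies of $\sigma$ --- in particular, a $1$ in the $(i,i+1)$-slot precisely when $i \in J_\sigma$.

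Next I would carry out a case analysis on $i \in [n-1]$. For $i \in J_\sigma$, the $(i,i+1)$-entry of $w_\sigma \cdot \mx_\lambda$ is already $1$, and standard commutator relations in $\fu$ show the root-subgroup factors produce no further superdiagonal contributions modulo $[\fu,\fu]$. For $i \in K_\sigma$, the base matrix contributes $0$ and no inversion creates a surviving superdiagonal entry at this position. For $i \in I_\sigma$, the base matrix contributes $0$, and the unique inversion producing a surviving superdiagonal term at $(i,i+1)$ is $(i,\ell(i+1))$, giving the coefficient $x_{(i,\ell(i+1))}$. Assembling the three cases yields the formula~\eqref{e.toric-paving}, and in particular the fact that $p \circ g_\sigma$ depends only on the $\C^{|I|}$-coordinates, i.e.\ $p \circ g_\sigma = p \circ g_\sigma \circ \pi_I$. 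The final assertion that $p \circ g_\sigma|_{\C^{|I|}}$ agrees with the isomorphism of~\eqref{eqn.affine.iso} is then immediate from comparing the two formulas.

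The main obstacle will be the case-by-case verification, especially for $i \in K_\sigma$, where one must show that every root-subgroup factor, after conjugation, contributes to the $(i,i+1)$-entry of $g_\sigma(x)$ only through terms lying in $[\fu,\fu]$. Similarly, in the $i \in I_\sigma$ case, one must verify that no inversion other than $(i,\ell(i+1))$ contributes a surviving superdiagonal term. Both facts come down to careful bookkeeping: showing that any candidate extra contribution factors through a commutator of at least two root vectors in $\fu$, and hence vanishes modulo $[\fu,\fu]$. This bookkeeping depends on the precise product ordering chosen in~\cite{Ji-Precup2022} and on how $w_\sigma$ permutes the support of $\mx_\lambda$, so the essence of the argument is combinatorial rather than conceptual.
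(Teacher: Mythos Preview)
Your plan is workable in principle, but the paper takes a cleaner route that bypasses the commutator bookkeeping entirely. Rather than expanding $f_\sigma$ as a product of root-subgroup elements times a permutation representative, the paper invokes the \emph{recursive} characterization of $f_\sigma$ from \cite[Lemma~4.6, Proposition~5.6]{Ji-Precup2022}: for $r$ not in the first column of $\sigma$ with $\ell=\ell(r)$,
\[
f_\sigma(x)e_\ell = \mx_\lambda f_{\sigma}(x) e_r + \sum_{\substack{\ell<t\leq n\\ (t,\ell)\in \inv(\sigma)}} x_{(t,\ell)}\, f_\sigma(x)e_{t},
\]
while $\mx_\lambda f_{\sigma}(x) e_r=0$ when $r$ is in the first column. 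Applying $f_\sigma(x)^{-1}$ to both sides gives the $(i+1)$-st column of $f_\sigma(x)^{-1}\mx_\lambda f_\sigma(x)$ explicitly as $e_\ell - \sum x_{(t,\ell)} e_t$, so the superdiagonal coefficient $a_i(x)$ can be read off immediately: it equals $1$ if $\ell=i$ (i.e.\ $i\in J_\sigma$), equals $x_{(i,\ell)}$ if $(i,\ell)\in\inv(\sigma)$ (i.e.\ $i\in I_\sigma$), and is $0$ otherwise. No tracking of how individual root-subgroup conjugations interact is needed.

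Your approach would ultimately arrive at the same place, but the difficulty you flag---verifying for $i\in K_\sigma$ that no root-subgroup factor leaves a surviving superdiagonal term, and for $i\in I_\sigma$ that only the inversion $(i,\ell(i+1))$ contributes---is exactly what the recursive formula packages up. One caution in your outline: the intermediate object $w_\sigma\cdot\mx_\lambda$ is generally \emph{not} in $\fu$ (its nonzero entries sit at horizontal adjacencies of $\sigma$, which need not be above the diagonal), so the phrase ``commutator relations in $\fu$'' understates what is required; you would in effect be redoing the conjugation that the Ji--Precup recursion has already encoded.
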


\begin{proof} Our proof requires a close inspection of the definition from~\cite{Ji-Precup2022} of the map $f_\sigma$ from \eqref{eqn.pavingmap}.  Let $x = (x_{(i,j)})_{(i,j)\in \inv(\sigma)}\in \C^{|\sigma|}$, and let $\{e_1, e_2 \ldots, e_n\}$ denote the standard basis of $\C^n$. By~\cite[Lemma 4.6, Proposition 5.6]{Ji-Precup2022} the map $f_\sigma$ satisfies the following.
\begin{enumerate}
\item If $r$ does not label a box in the first column of $\sigma$, let $\ell=\ell(r)$ be the label of the box directly to the left of $r$ in $\sigma$.  Then
\[
f_\sigma(x)e_\ell = \mx_\lambda f_{\sigma}(x) e_r + \sum_{\substack{\ell<t\leq n\\ (t,\ell)\in \inv(\sigma)}} x_{(t,\ell)} f_\sigma(x)e_{t}.
\]
\item If $r$ labels a box in the first column of $\sigma$, then $\mx_\lambda f_{\sigma}(x) e_{r} = 0$.
\end{enumerate}
These properties imply that $f_\sigma(x)^{-1} \cdot \mx_{\gl} \in \fu$, as noted above.  This follows since either
$f_\sigma(x)^{-1}  \mx_\lambda f_{\sigma}(x) e_r = 0$, or
\begin{equation} \label{e.in-u}
f_\sigma(x)^{-1}  \mx_\lambda f_{\sigma}(x) e_r = e_{\ell} - \sum_{\substack{\ell<t\leq n\\ (t,\ell)\in \inv(\sigma)}} x_{(t,\ell)} e_{t},
\end{equation}
which is a linear combination of the $e_i$ with $i<r$, as $(t,\ell)\in \inv(\sigma)$ implies $t<r$ by definition of the Springer inversions.

For each $x = (x_{(i,j)})\in \C^{|\sigma|}$ write
\[
p(f_{\sigma}(-x)^{-1}\cdot \mx_\lambda) =  \sum_{i\in [n-1]} a_i(x) E_{i,i+1} \in \fu /[\fu,\fu] = \toric_{ad}.
\]
Here $a_i(x)$ is, by definition, the coefficient of the standard basis vector $e_i$ when we write the vector
\[
f_{\sigma}(-x)^{-1}\mx_\lambda f_\sigma (-x) e_{i+1}
\]
as a linear combination with respect to the standard basis. To prove formula \eqref{e.toric-paving}, we must show that $a_i(x) = x_{(i,\ell(i+1))}$ if $i \in I_{\sigma}$, $a_i(x) = 1$ if $i \in J_{\sigma}$, and
$a_i(x) = 0$ if $i \in K_{\sigma}$.   

If $i+1$ labels a box in the first column of $\sigma$, then $i\in K_\sigma$.  Applying (2) with $r=i+1$ gives us
\[
f_{\sigma}(-x)^{-1}\mathsf{x}_\lambda f_\sigma (-x) e_{i+1} =0
\]
so $a_i(x)=0$ in this case.  If $i+1$ is not in the first column of $\sigma$, let $\ell=\ell(i+1)$ denote the entry directly to its left and apply 
\eqref{e.in-u} with $r=i+1$ to obtain
\[
f_{\sigma}(-x)^{-1}\mx_\lambda f_\sigma (-x) e_{i+1} = e_{\ell} + \sum_{\substack{\ell<t\leq n\\ (t,\ell)\in \inv(\sigma)}} x_{(t,\ell)} e_{t}.
\]
We see that $a_i(x) = 1$ if $\ell=i$, which occurs exactly when $i\in J_\sigma$.  Otherwise, either $(i,\ell)$ is an inversion of $\sigma$ or it is not.  In the former case $i\in I_\sigma$ and $a_i(x) = x_{(i,\ell(i+1))}$, and in the latter, $i\in K_\sigma$ and $a_i(x)=0$.  
This proves \eqref{e.toric-paving}.  The fact that $p \circ g_{\sigma} |_{\C^{| I |} } $ is the linear isomorphism~\eqref{eqn.affine.iso} from $\C^{| I |}$ to $\cw_{\sigma, ad}$ follows directly from the definition of $\cw_{\sigma, ad}$ and map~\eqref{eqn.affine.iso}.
\end{proof}

\begin{example}\label{ex.642-2} Let $n=12$, and $\sigma\in \RST([6 \ 4\ 2])$ be as in Example~\ref{ex.642}.  The computation in the proof of Proposition~\ref{prop.toric-paving} shows that for each $x=(x_{(i,j)})_{(i,j)\in \inv(\sigma)}$,
\begin{eqnarray*}
p(f_{\sigma}(-x)^{-1}\cdot \mx_{[6 \, 4 \, 2]}) &=& E_{1,2}+E_{2,3}+E_{3,4}+ E_{5,6}+E_{6,7}+E_{7,8}+\\
&& \quad \quad\quad\quad\quad\quad\quad\quad\quad E_{9,10}+x_{(10, 4)}E_{10,11}+E_{11,12}.
\end{eqnarray*}
We have $I_\sigma  = \{10\}$ in this case, so $\pi_I(x) = \left(x_{(10,4)}\right)$, and 
\[
p\circ g_{\sigma} (x) =  (1, 1, 1, 0, 1, 1, 1, 0, 1, x_{(10,4)}, 1) = \cw_{\sigma, ad} \subset \toric_{ad}.
\]
\end{example}


\subsection{A paving of the extended Springer fiber} 
This section contains the main result of the paper, which gives an orbifold paving of the extended Springer fiber.  Recall that to each row-strict tableau $\sigma$ of shape $\lambda$, we associated in Section~\ref{ss.row-strict} a set partition $I_\sigma \sqcup J_\sigma\sqcup K_\sigma$ of $[n-1]$.  We write $I=I_\sigma$, $J=J_\sigma$ and $K=K_\sigma$ throughout this section for simplicity.  Notice that $d_*=d_\sigma$, where $d_*$ is defined in the paragraph preceeding the statement of Proposition~\ref{p.components}. The reader may also wish to reacquaint themselves with the notation and discussion preceding the statement of Proposition~\ref{p.zaction2} in Section~\ref{ss.componentsZ}. 

Let $\tilde{ \C}^{|I|}$ be as in Section \ref{ss.componentsZ}.  We define $\tilde{\C}^{|\sigma|} = \tilde{\C}^{|I|} \oplus \C^{|\sigma|-|I|}$.
The map $q: \tilde{ \C}^{|I|} \to \C^{|I|}$ induces a map $\tilde{\C}^{|\sigma|} \to \C^{|\sigma|} $ which is the identity on the factor $\C^{|\sigma|-|I|}$.
By abuse of notation, we denote this map by $q$ as well.  
Recall that $\cw_{\sigma, ad}$ is the subvariety of $\toric_{ad}$ defined as in Section \ref{ss.aff-mod-finite} by
the equations $x_j = 1$, $x_k = 0$, for $j \in J$ and $k \in K$, and let
 $i_{\sigma}:  \C^{|I|} \to   \cw_{\sigma, \ad}$ be the map defined in~\eqref{eqn.affine.iso} of that section.  For each $r\in \Z_{d_\sigma}$ we have a commutative
 diagram,
 \begin{equation} \label{e.commdiag2}
\begin{CD}
 \tilde{\C}^{|I|} @>{i_r}>>   \cw_r  \\
 @VqVV                             @V{\pi}VV\\
 \C^{|I|} @>{i_\sigma}>>    \cw_{\sigma, \ad}
\end{CD}
\end{equation}
where $\cw_r$ is the component of $\pi^{-1}(\cw_{\sigma, ad})$ indexed by $r\in \Z_{d_\sigma}$ and $i_r: \tilde{\C}^{|I|} \to \cw_r$ is the isomorphism of Proposition~\ref{prop.quotient}.

Define $\fu_\sigma:= \{\mathsf{y} \in \fu \mid p(\mathsf{y}) \in \cw_{\sigma, ad}\}$ and $\tilde{\fu}_r = \cw_r \times_{\toric_{\ad}} \fu_\sigma \subset \toric \times_{\toric_{\ad}} \fu = \tilde{\fu}$.
Define $g_{\sigma, r}: \tilde{\C}^{|\sigma|} \to \tilde{\fu}$ by the formula
$$
g_{\sigma, r}(z) = (i_r \circ \pi_I (z), g_{\sigma} \circ q (z) ) \in \toric \times_{\toric_{\ad}} \fu = \tilde{\fu}.
$$
The map $g_{\sigma, r}$ factors through the inclusion $\tilde{\fu}_r \hookrightarrow \tilde{\fu}$ and
the map $g_{\sigma}$ factors through $\fu_{\sigma} \hookrightarrow \fu$.
We obtain a commutative diagram
\begin{equation} \label{e.commdiag3}
\begin{CD}
 \tilde{\C}^{|\sigma |} @>{\bar{g}_{\sigma, r}}>>   \tilde{\fu}_r  @>>> \tilde{\fu} \\
 @VqVV                             @V{\eta_r}VV               @V{\eta}VV \\
 \C^{| \sigma |} @>{\bar{g}_{\sigma}}>>    \fu_{\sigma} @>>>  \fu,
\end{CD}
\end{equation}
where the horizontal maps in the right hand square are the inclusions, 
and $\eta_r = \eta|_{\tilde{\fu}_r}$.  The composition of the top maps
is $g_{\sigma, r}$, and the composition of the bottom maps is~$g_{\sigma}$.  

Finally, define $F_{\sigma,r}:  \tilde{\C}^{|\sigma |} \to \tilde{\cm} = G \times^B \tilde{\fu}$ by the formula
$$
F_{\sigma,r}(z) = [f_{\sigma} \circ q (z), g_{\sigma, r}(z) ]
$$
and consider the commutative diagram
 \begin{equation} \label{e.commdiag4}
\begin{CD}
 \tilde{\C}^{|\sigma |} @>{F_{\sigma,r}}>>   G \times^B \tilde{\fu} = \tilde{\cm} \\
 @VqVV                             @V{\tilde{\eta}}VV\\
 \C^{| \sigma |} @>{F_\sigma}>>   G \times^B \fu = \tilde{\cn}.
\end{CD}
\end{equation}
Let $\tilde{\cc}_{\sigma,r}$ denote the image of $\tilde{\C}^{|\sigma |}$ under $F_{\sigma,r}$.
We can now state our main result.

\begin{Thm} \label{thm.extended-paving} 
 Let $\lambda$ be a partition of $n$, $\sigma\in \RST(\gl)$ and $r \in \Z_{d_\sigma}$.
 \begin{enumerate}
\item The map $F_{\sigma,r}$ takes $\tilde{\C}^{|\sigma|}$ isomorphically onto its image $\tilde{\cc}_{\sigma,r}$, and the preimage of the cell $\cc_\sigma$ under the map $\tilde{\map}: \tilde{\cm} \to \tilde{\cn}$ is the disjoint
union over $r \in \Z_{d_\sigma}$ of the $\tilde{\cc}_{\sigma,r}$.
\item The $\tilde{\cc}_{\sigma,r}$ for $\sigma\in \RST(\lambda)$ and $r \in \Z_{d_\sigma}$ are cells of an orbifold paving of the extended Springer fiber $\espr{\gl} \subseteq \tilde{\cm}$.  
\item The action of  $Z$ on $\espr{\gl}$ satisfies $\underline{\omega} \cdot \tilde{\cc}_{\sigma, i} = \tilde{\cc}_{\sigma, i+1}$.
\end{enumerate}
\end{Thm}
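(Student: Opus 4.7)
The plan is to derive all three parts from the commutative diagrams~\eqref{e.commdiag2}--\eqref{e.commdiag4} combined with the structural results of Section~\ref{sec.affine-subvarieties}.

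For part (1), I first check that $F_{\sigma,r}$ genuinely lands in $\tilde{\cm}$. Using the commutativity of~\eqref{e.commdiag2} and Proposition~\ref{prop.toric-paving} (which gives $p \circ g_\sigma = p \circ g_\sigma \circ \pi_I$ and identifies $p\circ g_\sigma|_{\C^{|I|}}$ with $i_\sigma$), one computes
\[
\pi \circ i_r \circ \pi_I(z) = i_\sigma \circ q \circ \pi_I(z) = i_\sigma \circ \pi_I \circ q(z) = p \circ g_\sigma \circ q(z),
\]
so the pair $g_{\sigma,r}(z) = (i_r\circ \pi_I(z),\, g_\sigma\circ q(z))$ satisfies the fiber-product condition defining $\tilde\fu$. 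To show $F_{\sigma,r}$ is an isomorphism onto $\tilde\cc_{\sigma,r}$, I would construct the inverse: by Theorem~\ref{thm.paving}, $F_\sigma$ is an isomorphism onto $\cc_\sigma$, and by Proposition~\ref{prop.quotient}, $i_r: \tilde\C^{|I|} \to \cw_r$ is an isomorphism. Combining $F_\sigma^{-1}$ on the $G$-- and $\fu$-factors with $i_r^{-1}$ on the toric factor yields a regular inverse, where regularity on the toric side uses the explicit ring-theoretic identification $A^{H_J}/\mathcal{I}^{H_J} \cong A^H/(\mathcal{I}\cap A^H)$.

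For the disjoint union claim, note that for $[g,\mx]\in \cc_\sigma$ the set-theoretic fiber $\tilde\map^{-1}([g,\mx])$ is $[g] \times \pi^{-1}(p(\mx))$, and as $[g,\mx]$ ranges over $\cc_\sigma$ the point $p(\mx)$ traces out $\cw_{\sigma, ad}$ by Proposition~\ref{prop.toric-paving}. Therefore $\tilde\map^{-1}(\cc_\sigma)$ is identified with the fiber product $\cc_\sigma \times_{\cw_{\sigma, ad}} \pi^{-1}(\cw_{\sigma, ad})$. Lemma~\ref{lem.disjoint} and Proposition~\ref{p.components} decompose $\pi^{-1}(\cw_{\sigma, ad})$ as the disjoint union $\bigsqcup_{r \in \Z_{d_\sigma}} \cw_r$ (with $d_* = d_\sigma$ by Lemma~\ref{lem.maxdiv}), and pulling back this decomposition gives $\tilde\map^{-1}(\cc_\sigma) = \bigsqcup_r \tilde\cc_{\sigma,r}$ as required.

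For part (2), Theorem~\ref{thm.paving} supplies an affine paving of $\spr{\gl}$ by the cells $\cc_\sigma$, hence a filtration by closed subvarieties. Pulling this filtration back by $\tilde\map|_{\espr{\gl}} : \espr{\gl} \to \spr{\gl}$ produces a filtration of $\espr{\gl}$ whose successive differences are the sets $\tilde\map^{-1}(\cc_\sigma)$. By part (1), each such difference is a disjoint union of cells $\tilde\cc_{\sigma,r} \cong \tilde\C^{|\sigma|} = \tilde\C^{|I|} \times \C^{|\sigma| - |I|}$, and Proposition~\ref{prop.quotient} exhibits $\tilde\C^{|I|}$ as the quotient of $\C^{|I|}$ by the finite group $H_J$, so each cell is a quotient of affine space by a finite group action, giving the orbifold paving. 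For part (3), the $Z$-action on $\tilde\cm = G\times^B \tilde\fu$ is induced from the $Z$-action on $\toric$ (acting on the first factor of the fiber product and trivially on $\fu$), while $Z$ acts trivially on $G$ since it is central. Proposition~\ref{p.zaction2} gives $\underline{\omega}\circ i_r = i_{r+1}$, hence $\underline{\omega}\cdot g_{\sigma,r}(z) = g_{\sigma, r+1}(z)$, and therefore $\underline{\omega}\cdot F_{\sigma,r}(z) = F_{\sigma, r+1}(z)$, proving $\underline{\omega}\cdot \tilde\cc_{\sigma,r} = \tilde\cc_{\sigma,r+1}$.

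The main obstacle I anticipate lies in part (1), where the disjoint-union claim must be understood carefully in view of Remark~\ref{rem:non-scheme}: the scheme-theoretic preimage of $\cw_{\sigma, ad}$ under $\pi$ can be nonreduced, so one must work with reduced preimages throughout, and must verify that the inverse of $F_{\sigma,r}$ is a genuine morphism of varieties rather than merely a bijection on points, which requires the ring-level isomorphism in Proposition~\ref{prop.quotient} rather than just the set-theoretic equality of quotients.
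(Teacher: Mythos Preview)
Your proposal is correct and follows essentially the same approach as the paper's proof. The paper makes the trivialization of the bundles $\tilde{\cn}\to G/B$ and $\tilde{\cm}\to G/B$ over $\cc_\sigma$ explicit (via auxiliary diagrams analogous to your use of~\eqref{e.commdiag2}--\eqref{e.commdiag4}) and then constructs a retraction $\nu:\C^{|\sigma|}\times\tilde{\fu}_r\to\tilde{\C}^{|\sigma|}$, $(x_1,x_2,v,\mathsf{y})\mapsto (i_r^{-1}(v),x_2)$, defined on the ambient trivialized bundle rather than just on the image $\tilde{\cc}'_{\sigma,r}$; this is exactly the ``combine $F_\sigma^{-1}$ with $i_r^{-1}$'' step you describe, but with the added care that $\nu$ is visibly a morphism because it is defined on the whole of $\C^{|\sigma|}\times\tilde{\fu}_r$. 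For the disjoint-union claim the paper verifies both inclusions by hand using $\nu$, whereas your fiber-product formulation $\tilde{\eta}^{-1}(\cc_\sigma)\cong\cc_\sigma\times_{\cw_{\sigma,ad}}\pi^{-1}(\cw_{\sigma,ad})$ packages the same computation more succinctly; these are equivalent, and your anticipation of the reduced-scheme subtlety from Remark~\ref{rem:non-scheme} is on point.
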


We prove this result by trivializing the bundles involved so that we can more easily
analyze the morphisms.
The quotient map $h: G \to G/B$ is a $B$-principal bundle.   As noted in the previous section,
$h \circ f_{\sigma}: \C^{|\sigma|} \to G/B$ takes $\C^{|\sigma|}$ isomorphically onto its image.
We have a Cartesian square
 \begin{equation} \label{e.commdiag5}
\begin{CD}
 \C^{| \sigma |} \times B @>>>   G  \\
 @VVV                             @VhVV\\
 \C^{| \sigma |} @>{h \circ f_{\sigma}}>>   G/B,
\end{CD}
\end{equation}
where the top map takes $(x,b)$ to $f_{\sigma}(x) b$.  This trivializes the pullback of the bundle
$G \to G/B$ along the map $h \circ f_{\sigma}$.  This trivialization induces a trivialization of the bundle
$\tilde{\cn} = G \times^B \fu \to G/B$:
 \begin{equation} \label{e.commdiag6}
\begin{CD}
 \C^{| \sigma |} \times \fu @>{\psi}>>   G \times^B \fu = \tilde{\cn}  \\
 @VVV                             @VVV\\
 \C^{| \sigma |} @>{h \circ f_{\sigma}}>>   G/B,
\end{CD}
\end{equation}
where $\psi(x,\mathsf{y}) = [f_{\sigma}(x), \mathsf{y}]$.  Similarly, we obtain a trivialization of the bundle $ \tilde{\cm}=G\times^B \tilde{\fu} \to G/B$:
 \begin{equation} \label{e.commdiag7}
\begin{CD}
 \C^{| \sigma |} \times \tilde{\fu} @>{\tilde{\psi}}>>   G \times^B \tilde{\fu} = \tilde{\cm}  \\
 @VVV                             @VVV\\
 \C^{| \sigma |} @>{h \circ f_{\sigma}}>>   G/B,
\end{CD}
\end{equation}
where $\tilde{\psi}(x,y) = [f_{\sigma}(x),y]$. 

We expand the commutative diagram \eqref{e.commdiag4} by factoring the maps $F_{\sigma,r}$ and $F_{\sigma}$:
 \begin{equation} \label{e.commdiag8}
\begin{CD}
 \tilde{\C}^{|\sigma |} @>{q \times \bar{g}_{\sigma,r}}>>     \C^{| \sigma |} \times \tilde{\fu}_r  @>>> \C^{| \sigma |} \times \tilde{\fu} @>{\tilde{\psi}}>>   G \times^B \tilde{\fu} = \tilde{\cm} \\
 @VqVV                     @V{\small{\mbox{id}} \times \eta_r}VV       @V{\small{\mbox{id}} \times \eta}VV   @V{\tilde{\eta}}VV\\
 \C^{| \sigma |}        @>{\small{\mbox{id}} \times \bar{g}_{\sigma}}>>   \C^{| \sigma |} \times \fu_\sigma  @>>>   \C^{| \sigma |} \times \fu  @>{\psi}>>   G \times^B \fu = \tilde{\cn}.
 \end{CD}
\end{equation}
Note  that the morphism $\mbox{id} \times g_{\sigma}: \C^{| \sigma |}  \to \C^{| \sigma |} \times \fu $ is the graph morphism
of $g_{\sigma}$.  The morphism $q \times g_{\sigma,r}: \tilde{\C}^{|\sigma |} \to \C^{| \sigma |} \times \tilde{\fu} $ is the composition
of the graph morphism of $g_{\sigma,r}$ with the projection $q \times \mbox{id}: \tilde{\C}^{|\sigma |} \times \tilde{\fu}
\to \C^{| \sigma |} \times \tilde{\fu}$.

Define $\tilde{\cc}'_{\sigma,r}$ and $\cc'_{\sigma}$ to be the images of $ \tilde{\C}^{|\sigma |} $ and $ \C^{| \sigma |} $
under the finite morphisms $q \times \bar{g}_{\sigma,r}$ and $\small{\mbox{id}} \times \bar{g}_{\sigma}$, respectively.  
Because the middle horizontal maps are inclusions of closed schemes, we can view $\tilde{\cc}'_{\sigma,r}$
and $\cc'_{\sigma}$ (respectively) as closed subschemes of $ \C^{| \sigma |} \times \tilde{\fu} $ and $\C^{| \sigma |} \times \fu$.

\begin{proof}[Proof of Theorem~\ref{thm.extended-paving}] (1) Since $\tilde{\psi}$ is an isomorphism
onto its image, to show that $q \times g_{\sigma,r}$ takes $ \tilde{\C}^{|\sigma |}$ isomorphically onto $\tilde{\cc}_{\sigma,r}$,
 it suffices
to prove that $q \times \bar{g}_{\sigma,r}$ takes $ \tilde{\C}^{|\sigma |}$ isomorphically onto $\tilde{\cc}'_{\sigma,r}$.
Define a morphism $\nu:  \C^{| \sigma |} \times \tilde{\fu}_r \to  \tilde{\C}^{|\sigma |}$ as follows.
Recalling that $\C^{| \sigma |} \times \tilde{\fu}_r = (\C^{|I|} \times \C^{|\sigma|-|I|}) \times (\cw_r\times_{\toric_{ad}} \fu_\sigma)$, let $\zeta=(x_1,x_2,v, \mathsf{y})$ be an element of $\C^{|\sigma|}\times \fu_r$.  Recall also that $\tilde{\C}^{|\sigma |}  =  \tilde{\C}^{|I|}  \times \C^{|\sigma| - |I|}$ and define $\nu(\zeta)=(j_r(v), x_2)$, where $j_r: \cw_r \to \tilde{\C}^{|I|}$ is the inverse of the isomorphism $i_r$.  We have
$$
\tilde{\C}^{|\sigma |} \xrightarrow{\;q \times \bar{g}_{\sigma,r}\;} \C^{| \sigma |} \times \tilde{\fu}_r \stackrel{\nu}{\longrightarrow}
\tilde{\C}^{|\sigma |}.
$$
Using the definitions of the maps involved, given $z=(z_1, z_2)\in \tilde{\C}^{|\sigma |} $ we have
\[
(\nu  \circ (q \times \bar{g}_{\sigma,r}))(z) = \nu (q(z_1), z_2, i_r(z_1), g_\sigma \circ q(z)) = ((j_r\circ i_r)(z_1), z_2)=z.
\]
Thus $ \nu  \circ (q \times \bar{g}_{\sigma,r})$ is the identity, so $q \times \bar{g}_{\sigma,r}$ takes
$\tilde{\C}^{|\sigma |}$ isomorphically onto its image $\tilde{\cc}'_{\sigma,r}$, as desired.

To show that $\tilde{\eta}^{-1}(\cc_{\sigma})$ is the disjoint union of the $\tilde{\cc}_{\sigma,r}$,
it suffices to show that $(\mbox{id} \times \eta)^{-1}(\cc'_{\sigma})$ is the disjoint union of the $\tilde{\cc}'_{\sigma,r}$.
The $\cw_r$ are disjoint for different values of $r$ (by 
Lemma \ref{lem.disjoint} and Proposition \ref{p.components}), hence so are the $\C^{| \sigma |} \times \tilde{\fu}_r$,
and hence the $\tilde{\cc}'_{\sigma,r}$ are as well, since $\tilde{\cc}'_{\sigma,r}$ is contained in $\C^{| \sigma |} \times \tilde{\fu}_r$.  The commutativity of \eqref{e.commdiag8} implies 
\begin{eqnarray*}
(\mbox{id}\times \eta)(\tilde{\cc}'_{\sigma, r}) &=& \left((\mbox{id}\times \eta) \circ (q\times g_{\sigma, r})\right)(\tilde{\C}^{|\sigma|})\\
&=& \left( (\mbox{id}\times g_\sigma)\circ q\right)(\tilde{\C}^{|\sigma|})
=  (\mbox{id}\times g_\sigma)(\C^{\sigma}) = \cc_\sigma'.
\end{eqnarray*}
Therefore $\bigcup_r \tilde{\cc}'_{\sigma,r} \subseteq (\mbox{id} \times \eta)^{-1}(\cc'_{\sigma})$.

We now prove the reverse inclusion.  Let $\zeta=(x_1, x_2, v, \mathsf{y}) \in \C^{|\sigma|} \times \tilde{\fu}$ such that $(\mbox{id}\times \eta)(\zeta)\in \cc_\sigma'$.  By definition of the map $\eta$ and scheme $\cc_\sigma'$ and by Proposition~\ref{prop.toric-paving}, our assumptions imply that $\mathsf{y} = g_\sigma (x_1, x_2)$ and $\pi(v) = p \circ g_\sigma(x_1) \in \cw_{\sigma, ad}$.  In particular, we see that $v\in \cw_r$, and therefore $\zeta\in \C^{| \sigma |} \times \tilde{\fu}_r$ for some $r\in \Z_{d_\sigma}$.   Let $\tilde{\zeta} = \nu(\zeta) = (j_r(v), x_2) \in  \tilde{\C}^{| I |} \times \C^{|I| - |\sigma|} = \tilde{\C}^{|\sigma |} $.
We claim that
 $\zeta = (q \times \bar{g}_{\sigma,r})(\tilde{\zeta})$.  This suffices, since then $\zeta \in \tilde{\cc}'_{\sigma,r}$, and the reverse inclusion follows.  To prove the claim, we begin with the observation that $\pi(v) = i_\sigma (x_1) $ by Proposition~\ref{prop.toric-paving}. Since the diagram~\ref{e.commdiag2} commutes and $i_\sigma$ is an isomorphism, it follows that 
\[
(i_\sigma\circ q )(j_r(v)) = (\pi \circ i_r)(j_r(v)) = i_\sigma(x_1) \Rightarrow q(j_r(v)) = x_1.
\] 
Thus, 
\[
(q\times \bar{g}_{\sigma, r}) (\tilde{\zeta}) = (q(j_r(v)), x_2, i_r\circ j_r(v), g_\sigma(q(j_r(v)), x_2)) = (x_1, x_2, v, g_\sigma (x_1, x_2)) = \zeta,
\]
as desired.

(2) The $\cc_{\sigma}$ for $\sigma \in \RST(\lambda)$ form the cells of an affine paving of $\spr{\lambda}$.  
This means that $\spr{\lambda}$ is the disjoint union of the $\cc_{\sigma}$, and moreover, that
there is a filtration of $\spr{\lambda}$ by closed subspaces
$$\spr{\lambda,0} \subset \spr{\lambda,1} \subset \cdots \subset \spr{\lambda,k} = \spr{\lambda}$$  such that each
$ \spr{\lambda,i} -  \spr{\lambda,i-1}$ is a disjoint union of the
cells $\cc_{\sigma}$ it contains.
The inverse images $\espr{\lambda,i} =  \tilde{\eta}^{-1}( \spr{\lambda,i} )$ form a filtration of $\espr{\lambda}$ by
closed subspaces such that each $ \espr{\lambda,i} -  \espr{\lambda,i-1}$ is a disjoint union
of the $\tilde{\eta}^{-1}(\cc_{\sigma})$ it contains.  By (1),  $\tilde{\eta}^{-1}(\cc_{\sigma})$ is a disjoint union of
$\tilde{\cc}_{\sigma,r}$.  Hence the $\tilde{\cc}_{\sigma,r}$ are cells of an orbifold paving of $\espr{\gl} \subseteq \tilde{\cm}$.

(3) Proposition \ref{p.zaction2} implies that $\underline{\omega} \circ g_{\sigma,r} = g_{\sigma,r+1}$, which in turn implies
that $\underline{\omega} \circ F_{\sigma,r} = F_{\sigma,r+1}$.  Since $\cc_{\sigma,r}$ is the image of $F_{\sigma,r}$, the
result follows.
\end{proof}


\section{Poincar\'e polynomials} \label{sec.combinatorics}

\subsection{Divisible tableaux}

Suppose that $\lambda = [\lambda_1 \ \cdots \ \lambda_\ell]$ is a partition of $n$ with $\ell$ parts.  If $d$ divides $\lambda_i$ for each $1\leq i\leq \ell$, we define a partition of $\frac{n}{d}$ by $\lambda/d := [\lambda_1/d \ \cdots \ \lambda_\ell/d]$.  Suppose that $\sigma\in \RST(\lambda)$ 
and that $d$ is a divisor of $\sigma$.  We form a \textit{quotient tableau} of shape $\lambda/d$, denoted $\sigma/d$, by merging consecutive blocks in the rows of $\sigma$ and labeling the merged blocks in the same relative order in which they appeared in $\sigma$.  Thus, the block containing $md$ in $\sigma$ will be labeled with $m$ in $\sigma/d$ for each $1\leq m\leq n/d$. Since $\sigma$ is row-strict, $\sigma/d$ is as well.  

\begin{example}\label{ex.divtab} The row strict tableau $\sigma$ of shape $\lambda=[4^2 \ 2^2]$ from Example~\ref{ex.IJK} is displayed below, together with the corresponding quotient tableau $\sigma/2$ of shape $\lambda/2 = [2^2 \ 1^2]$.
\[\ytableausetup{centertableaux} \begin{ytableau}3 & 4 & 5 & 6\\ 1 & 2 & 9 & 10\\ 7 & 8\\ 11 & 12\end{ytableau} \quad\quad\quad\quad\quad \begin{ytableau} 2 & 3\\ 1 & 5\\ 4 \\ 6 \end{ytableau}
\]
\end{example}

Recall that $|\sigma|$ denotes the number of Springer inversions of $\sigma$ (Definition~\ref{def.Springer.inv}).  The main result of this section proves a precise relationship between $|\sigma |$ and $|\sigma/d|$.  First, we need a relaxation of our Springer inversion definition. 

\begin{Def} \label{def.Springer.pair} Let $i,j\in [n]$ such that $i>j$.  We say that the pair $(i,j)$ is a \textit{Springer pair} of $\sigma\in \RST(\lambda)$ if 
\begin{enumerate}
\item $i$ occurs in the same column as $j$ or in any column strictly to the left of the column containing $j$ in $\sigma$, and
\item if the box directly to the right of $j$ in $\sigma$ is labeled by $r$, then $i< r$.
\end{enumerate}
We will denote the set of Springer pairs for $\sigma$ by $\pairs(\sigma)$ and the size of this set by $|\pairs(\sigma)|$.  
\end{Def}
Comparing this with Definition~\ref{def.Springer.inv} (the definition of Springer inversions), we see that for a given $\sigma\in \RST(\lambda)$,
all Springer inversions are Springer pairs, but there can be more Springer pairs than Springer inversions. Indeed, condition (2) of Definitions~\ref{def.Springer.pair} and~\ref{def.Springer.inv} are the same, but condition (1) of Definition~\ref{def.Springer.pair} includes all pairs $i>j$ satisfying (2) such that $j$ appears below $i$ and in the same column while condition (1) of Definition~\ref{def.Springer.inv} excludes these pairs.    Note that if $\sigma$ is a standard tableau, then the Springer pairs and Springer inversions coincide.

\begin{example} The pair $(4,2)$ is a Springer pair for the row-strict tableau of shape $[4^2 \ 2^2]$ appearing in Examples~\ref{ex.IJK} and~\ref{ex.divtab}, but not a Springer inversion (see Example~\ref{ex.inversions}).
\end{example}

\begin{Prop}\label{pairs.invers}
Let $\sigma \in \RST(\lambda)$.  If $d$ is a divisor of $\sigma$, then \begin{equation}|\pairs(\sigma)| - |\sigma | = |\pairs(\sigma/d)| - |\sigma/d|.\end{equation}
\end{Prop}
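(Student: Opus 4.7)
The plan is to interpret the difference $|\pairs(\sigma)|-|\sigma|$ as the number of Springer pairs of $\sigma$ that are \emph{not} Springer inversions, and then to construct an explicit bijection between such non-inversion pairs of $\sigma$ and of $\sigma/d$. Comparing Definitions~\ref{def.Springer.inv} and~\ref{def.Springer.pair}, conditions~(2) are identical, so non-inversion Springer pairs are precisely the pairs $(i,j)$ with $i>j$ such that $i$ lies \emph{strictly above} $j$ in the same column of $\sigma$, and condition~(2) of Definition~\ref{def.Springer.pair} holds.

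First I would show that in any non-inversion Springer pair $(i,j)$ of $\sigma$, the entry $j$ must be the last entry of its block. Indeed, if $j$ is not last in its block, then the box directly to the right of $j$ contains $j+1$, so condition~(2) forces $i<j+1$, contradicting $i>j$. Thus $j=md$, where $m$ labels the block of $j$ in $\sigma/d$. Since $md$ sits in the rightmost column of its block, and $i$ lies in the same column of $\sigma$ as $j$, the entry $i$ must also be the last entry of its own block: write $i=Md$. Moreover, the blocks containing $Md$ and $md$ occupy the same column in $\sigma/d$, with $M$'s row strictly above $m$'s row.

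Next I would define the correspondence $(Md,md)\mapsto(M,m)$ and check that it is a bijection between the non-inversion Springer pairs of $\sigma$ and those of $\sigma/d$. The row/column containments translate directly between $\sigma$ and $\sigma/d$ under the block collapse. For condition~(2): if $m'$ labels the block directly to the right of $m$ in $\sigma/d$, then the box directly to the right of $md$ in $\sigma$ contains $(m'-1)d+1$, and the inequality $Md<(m'-1)d+1$ is equivalent to $M<m'$; if $m$ is rightmost in its row of $\sigma/d$, then $md$ is rightmost in its row of $\sigma$ and condition~(2) is vacuous on both sides. The inverse map $(M,m)\mapsto(Md,md)$ is obtained by reversing these steps, and the identity follows by equating cardinalities.

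The main obstacle is conceptual rather than computational: one must notice that only the end-of-block entries can participate in non-inversion Springer pairs, which is precisely what allows the block-collapse map $\sigma\to\sigma/d$ to induce the desired bijection. Once this observation is in hand, the translation of conditions~(1) and~(2) of Definition~\ref{def.Springer.pair} is routine.
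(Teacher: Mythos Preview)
Your proposal is correct and follows essentially the same approach as the paper: both arguments identify non-inversion Springer pairs as those with $i$ above $j$ in the same column, observe that both entries must sit at the ends of blocks (hence are of the form $(Md,md)$), and then verify that $(Md,md)\leftrightarrow(M,m)$ gives the desired bijection by translating condition~(2) via $Md<(m'-1)d+1\Leftrightarrow M<m'$. Your write-up is slightly more explicit in checking both directions of condition~(2), but the idea is identical.
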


\begin{proof} Without loss of generality, we may assume $d>1$.  Suppose $\ell>k$.  As the entries in each block are consecutive, 
$(\ell,k)$ can satisfy condition (2) of the Springer pair or Springer inversion definition only if $k$ labels the cell at the end of a block. Furthermore, if $(\ell,k)$ is a Springer pair but not an inversion, then both $\ell$ and $k$ must occur at the end of a block (since $\ell$ and $k$ are in the same column).  This shows that every Springer pair of $\sigma$ that is not an inversion is of the form $(\ell, k)=(di, dj)$, where $(i,j)$ is a Springer pair of $\sigma/d$ that is not a Springer inversion.

Conversely, suppose that $(i,j) \in \pairs(\sigma/d)$, but $(i,j)$ is not a Springer inversion of $\sigma/d$.  Then $i$ is in the same column as $j$ but above $j$.  This means that in $\sigma$, the boxes containing values $(i-1)d+1,(i-1)d+2,\dots,id$ appear in a row above the boxes containing $(j-1)d+1,(j-1)d+2,\dots,jd$.  Let $r$ denote the entry labeling the box to the right of $j$ in $\sigma/d$, if it exists.  We have the following configuration of blocks in $\sigma$.
\[
\ytableausetup{centertableaux, boxsize=0.56in} \begin{ytableau} {\scriptstyle(i-1)d+1} & {\scriptstyle(i-1)d+2} & \cdots & {\scriptstyle id} \\ \none[\vdots] & \none[\vdots]  & \none[\vdots]  & \none[\vdots]  \\   {\scriptstyle(j-1)d+1} &  {\scriptstyle(j-1)d+2} & \cdots &  {\scriptstyle jd} &  {\scriptstyle(r-1)d+1}  & {\scriptstyle(r-1)d+2} & \cdots & {\scriptstyle rd} \end{ytableau}
\]
As $(i,j)$ is a Springer pair for $\sigma/d$, we have
\[
i< r \Rightarrow i \leq (r-1) \Rightarrow id \leq (r-1)d \Rightarrow id<(r-1)d+1.
\] 
We see that $(id,jd)$ is a Springer pair for
$\sigma$ that is not a Springer inversion.  Note that this is also true in the case where $j$ labels a box at the end of a row, i.e., the case in which $r$ does not exist. 

The preceding discussion shows that there is a bijection $(i,j) \mapsto (di,dj)$ between Springer pairs of $\sigma/d$ 
which are not Springer inversions
and Springer pairs of $\sigma$ which are not Springer inversions.
The result follows.
\end{proof}

We rearrange the equality in Proposition~\ref{pairs.invers} to obtain
\begin{Cor}\label{cor.pairs}
If $\sigma \in \RST(\lambda)$ and $d$ is a divisor of $\sigma$, then \begin{equation}|\sigma | = |\pairs(\sigma) | -|\pairs(\sigma/d)| +|\sigma /d|.\end{equation}
\end{Cor}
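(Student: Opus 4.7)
The statement is an immediate algebraic consequence of Proposition~\ref{pairs.invers}, which asserts the equality
$$|\pairs(\sigma)| - |\sigma| = |\pairs(\sigma/d)| - |\sigma/d|.$$
My plan is simply to rearrange this identity. Adding $|\pairs(\sigma/d)|$ to both sides and then adding $|\sigma|$ to both sides (equivalently, solving for $|\sigma|$) yields
$$|\sigma| = |\pairs(\sigma)| - |\pairs(\sigma/d)| + |\sigma/d|,$$
which is exactly the corollary.

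Since the proof is a one-line manipulation, there is essentially no obstacle: the entire substance of the result has already been carried out in the bijective argument for Proposition~\ref{pairs.invers}, where a correspondence $(i,j) \mapsto (di,dj)$ between non-inversion Springer pairs of $\sigma/d$ and non-inversion Springer pairs of $\sigma$ was established. The corollary simply repackages that equality in a form more directly useful for the inductive Poincar\'e-polynomial computations that follow in Section~\ref{sec.combinatorics}, where one wishes to express $|\sigma|$ in terms of the combinatorial invariants of the smaller-shape tableau~$\sigma/d$.
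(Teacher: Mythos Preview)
Your proof is correct and matches the paper's approach exactly: the paper likewise states that the corollary follows by rearranging the equality of Proposition~\ref{pairs.invers}.
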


If $\sigma$ is row strict, we can rearrange the entries in each column of $\sigma$ into decreasing order to obtain a standard tableau.  This is called the \emph{standardization of $\sigma$}, denoted here by $\std(\sigma)$. We use this operation in the proof of the following result.

\begin{Prop}\label{dim.pairs}
For all $\sigma \in \RST(\lambda)$, $|\pairs(\sigma)| = \dim \spr{\gl}$.
\end{Prop}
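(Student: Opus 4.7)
The plan is to reduce Proposition~\ref{dim.pairs} to the standard tableau case by passing to the standardization $\std(\sigma)$, since for standard tableaux Springer pairs coincide with Springer inversions and Remark~\ref{rem:dimension} directly gives the claimed dimension. Concretely, I aim to establish: (i) $|\pairs(\sigma)| = |\pairs(\std(\sigma))|$ for every $\sigma \in \RST(\lambda)$, and (ii) $\pairs(\tau) = \inv(\tau)$ for every standard tableau $\tau$. Combining (i) and (ii) with Remark~\ref{rem:dimension} applied to $\tau = \std(\sigma)$ gives $|\pairs(\sigma)| = |\inv(\std(\sigma))| = \dim \spr{\gl}$.

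Fact (ii) should be essentially immediate: in a standard tableau, columns strictly increase top-to-bottom, so any pair $i > j$ in the same column automatically has $i$ below $j$; hence the only difference between condition (1) of Definition~\ref{def.Springer.pair} and condition (1) of Definition~\ref{def.Springer.inv} disappears, and the two sets agree.

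The main content is fact (i), which I plan to prove via the following invariance lemma: if $\sigma, \sigma' \in \RST(\lambda)$ have the same column sets (i.e., the same entries in each column, possibly arranged in different rows), then $|\pairs(\sigma)| = |\pairs(\sigma')|$. Since $\std(\sigma)$ is by construction a column-wise reordering of $\sigma$, the lemma applied to the pair $(\sigma, \std(\sigma))$ yields fact (i). To prove the lemma, let $A_c$ denote the set of entries in column $c$ and set $E_c := A_1 \cup \cdots \cup A_c$, and decompose $|\pairs(\sigma)| = \sum_c T_c$, where $T_c$ collects Springer pairs $(i,j)$ with $j \in A_c$. Splitting entries of $A_c$ according to whether they have a right neighbor, using that $j \mapsto r(j)$ is a bijection between $A_c$-entries with a right neighbor and $A_{c+1}$, and exploiting the identity
\[
|\{i \in E_c : j < i < r\}| \;=\; |\{i \in E_c : i > j\}| \;-\; |\{i \in E_c : i > r\}|
\]
(where the substitution of ``$\geq r$'' by ``$> r$'' is valid because $r \in A_{c+1}$ and $A_{c+1} \cap E_c = \emptyset$), I expect to obtain
\[
T_c \;=\; \sum_{j \in A_c} |\{i \in E_c : i > j\}| \;-\; \sum_{j' \in A_{c+1}} |\{i \in E_c : i > j'\}|,
\]
which manifestly depends only on the column-set data $A_c, A_{c+1}, E_c$.

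The main obstacle is the careful bookkeeping in $T_c$: end-of-row entries and entries with a right neighbor contribute through superficially different formulas, and one must use the bijection $j \mapsto r(j)$ to reorganize the right-neighbor contributions into a clean sum indexed by $A_{c+1}$. Once this reorganization is in place the column-set invariance is transparent, and combining (i) and (ii) completes the proof.
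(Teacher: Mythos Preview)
Your proposal is correct and follows the same overall strategy as the paper: reduce to the standard case via standardization, observe that for standard tableaux Springer pairs and Springer inversions coincide, and then invoke Remark~\ref{rem:dimension}. The only difference is in how you establish column-set invariance of $|\pairs(\sigma)|$. You fix the column $c$ of the smaller entry $j$, set $T_c$ equal to the number of Springer pairs with $j\in A_c$, and after the bijection $j\mapsto r(j)$ and the substitution $\geq r \leadsto > r$ (legitimate since $r\in A_{c+1}$ is disjoint from $E_c$) you obtain a formula for $T_c$ depending only on $A_c$, $A_{c+1}$, and $E_c$. The paper instead fixes the larger entry $i$, say in column $k$, and shows via a telescoping sum over columns $m\ge k$ that the number of Springer pairs of the form $(i,j)$ equals $a_k(i):=|\{j\in A_k: j<i\}|$, which is visibly column-set invariant. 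The two decompositions are dual to one another and of comparable length; the paper's per-$i$ count yields the slightly cleaner closed form $a_k(i)$, while your per-column formula makes the telescoping structure across adjacent columns explicit.
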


\begin{proof}
We claim that $|\pairs(\sigma)|$ is invariant under the standardization operation, so $|\pairs(\sigma)| = |\pairs(\std(\sigma))|$.  
This suffices, since because $\std(\sigma)$ is standard, all of its Springer pairs are also Springer inversions.  Thus, Remark \ref{rem:dimension} implies
that $|\pairs(\std(\sigma))|=|\std(\sigma) | = \dim \spr{\gl}$.

To prove the claim, we show that for $i \in [n-1]$, the the number of pairs of the form $(i, j)$ of $\sigma$ depends only on the entries in the column containing $i$, so it is unchanged by standardization.  
Define $a_m(i)$ to be the number of entries in the $m$-th column of $\sigma$ that are less than $i$.   Suppose $i$ is in the $k$-th column.  We will prove 
that the number of pairs of the form $(i,j)$ is $a_k(i)$.

To count pairs $(i,j)$ of $\sigma$, by condition (1) of Definition~\ref{def.Springer.pair}, we need not consider any $j<i$ such that $j$ is in a column to the left of $i$.  
Suppose $\sigma$ has $t$ total columns, and consider the nonnegative integers $a_k(i),\dots, a_t(i)$.  Since $\sigma$ is row strict, this sequence must be decreasing, so that $a_k(i)\geq a_{k+1}(i)\geq\cdots\geq a_t(i)$. For each $m$ such that $k\leq m <t$, each of the $a_{m+1}(i)$ entries in the $m+1$-st column must be preceded by an entry in the $m$-th column that is also less than $i$.  This leaves precisely $a_{m}(i) - a_{m+1}(i)$ entries in the $m$-th column to satisfy condition (2) of Definition~\ref{def.Springer.pair}.  For the final column, there is no need to check this condition.  Thus $i$ appears in exactly
\begin{equation}
\left(\sum_{m=k}^{t-1} a_m(i) - a_{m+1}(i)\right) + a_t(i) = a_k(i)
\end{equation}
Springer pairs of the form $(i,j)$, proving the claim.
\end{proof}

Combining Proposition \ref{dim.pairs} with Corollary \ref{cor.pairs} yields a simple relation between $|\sigma |$ and $|\sigma/d |$.
Recall that $\spr{\gl/d}$ is a Springer fiber for the group $SL_{n/d}(\C)$ over a nilpotent element of Jordan type $\gl/d$. (see Section \ref{sec.Springer.def})

\begin{Cor} \label{cor.shift}
If $\sigma \in \RST(\lambda)$ and $d$ is a divisor of $\sigma$, then \begin{equation}|\sigma | =  \dim \spr{\gl}-\dim \spr{\gl/d} +|\sigma /d|.\end{equation}
\end{Cor}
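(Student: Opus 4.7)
The plan is to combine the two preceding results—Corollary~\ref{cor.pairs} and Proposition~\ref{dim.pairs}—by direct substitution. Specifically, Corollary~\ref{cor.pairs} already expresses $|\sigma|$ in terms of $|\pairs(\sigma)|$, $|\pairs(\sigma/d)|$, and $|\sigma/d|$, so the only remaining task is to replace each ``pairs'' quantity with the dimension of the appropriate Springer fiber.

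First, I would observe that $\sigma \in \RST(\lambda)$ implies $|\pairs(\sigma)| = \dim \spr{\lambda}$ by Proposition~\ref{dim.pairs}. Next, since $d$ is a divisor of $\sigma$, the quotient tableau $\sigma/d$ is well-defined and lies in $\RST(\lambda/d)$ (here $\lambda/d$ makes sense because if $d$ divides every block length in $\sigma$, then in particular $d$ divides each part $\lambda_i$, as each row of $\sigma$ is partitioned into blocks of size $d$). Applying Proposition~\ref{dim.pairs} to $\sigma/d$ then yields $|\pairs(\sigma/d)| = \dim \spr{\lambda/d}$.

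Substituting both equalities into the formula of Corollary~\ref{cor.pairs} gives
\[
|\sigma| = |\pairs(\sigma)| - |\pairs(\sigma/d)| + |\sigma/d| = \dim \spr{\lambda} - \dim \spr{\lambda/d} + |\sigma/d|,
\]
which is the claim. There is no real obstacle here: the combinatorial content has been absorbed into the earlier results (the bijection on non-inversion Springer pairs used for Corollary~\ref{cor.pairs}, and the column-wise counting argument used for Proposition~\ref{dim.pairs}). The only minor point to verify in passing is that $\lambda/d$ is a legitimate partition and that $\sigma/d$ is a row-strict tableau of that shape—both of which follow immediately from the definition of a divisor of $\sigma$ and from $\sigma$ itself being row-strict.
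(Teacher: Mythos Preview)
Your proof is correct and matches the paper's approach exactly: the paper also derives the result by substituting Proposition~\ref{dim.pairs} into Corollary~\ref{cor.pairs}.
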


If $d$ is a common divisor of all the parts of $\lambda$, denoted below by $d|\gl$, we define 
\begin{equation} \label{e.dgld}
D_{\gl, d} = \dim  \spr{\gl} -\dim \spr{\gl/d}.
\end{equation}
Equivalently, if $\mu$ is a partition of $n/d$ such that $\mu = \gl/d$ we write $\mu| \gl$ and say that $\mu$ divides $\lambda$.  In that case, we write 
$$
D_{\gl, \mu} = \dim \spr{\gl}-\dim \spr{\mu}.
$$
Since $\dim \spr{\gl} = \sum_i (i-1)\gl_i$, we see that $\dim\spr{\gl/d} = \frac{1}{d} \dim \spr{\gl}$.  
Hence
\begin{equation} \label{e.dim.diff}
D_{\gl, d} = \frac{d-1}{d} \sum_i (i-1)\gl_{i} .
\end{equation}

\begin{example} Let $\sigma$ and $\sigma/2$ be the row strict tableaux of shape $\lambda = [4^2 \ 2^2]$ and $\lambda/2 = [2^2 \ 1^2]$, respectively, appearing in Example~\ref{ex.divtab} above.  The Springer inversions of $\sigma$ were computed in Example~\ref{ex.inversions} and we also have 
\[
\inv(\sigma/2) = \{ (6,4), (6,5), (6,3), (5,3), (4,1), (4,3) \}.
\]
The reader can confirm that the map $(i,j) \mapsto (2i, 2j)$ defines an injection from $\inv(\sigma/2)$ to $\inv(\sigma)$, as established in the proof of Proposition~\ref{pairs.invers} above. Thus $|\sigma|-|\sigma/2| = 13-6= 7$.  This confirms the results of Corollary~\ref{cor.shift}, as $\dim \spr{[4^2 \ 2^2]} - \dim \spr{[2^2 \ 1^2]} = 14-7 = 7$ in this case.
\end{example}


\subsection{Poincar\'e polynomials of extended Springer fibers}
Let $\mathbb{F}$ denote a field of characteristic zero or of characteristic prime to $n$.
We take homology and cohomology with coefficients in $\mathbb{F}$, and write simply
$H_i(\cy)$ and $H^i(\cy)$ for $H_i(\cy;\mathbb{F})$ and $H^i(\cy;\mathbb{F})$.  If $\cy$ is a variety whose odd-dimensional
cohomology vanishes, we define its \textit{modified Poincar\'e polynomial} by
$$
P(\cy;t) := \sum_j \dim H^{2j}(\cy) \ t^j,
$$  
so $P(\cy; t^2)$ is the usual Poincar\'e polynomial of $\cy$.  By abuse of terminology, we will simply
refer to $P(\cy;t)$ as the Poincar\'e polynomial of $\cy$.  If $Z \cong \Z_n$ acts on $\cy$, then it acts on each $H^{2j}(\cy)$.
For each character $\chi_i$ of $Z$, let $H^{2j}(\cy)_{\chi_i}$ denote the $\chi_i$-isotypic component
of $H^{2j}(\cy)$, and define the \textit{$\chi_i$-isotopic Poincar\'e polynomial} by the formula
$$
P_{\chi_i}(\cy; t) := \sum_j \dim H^{2j}(\cy)_{\chi_i} \ t^j.
$$
It is convenient to combine all the isotypic Poincar\'e polynomials into the \textit{$Z$-equivariant Poincar\'e polynomial}
defined by the formula
$$
\tilde{P}(\cy; t) = \sum P_{\chi_i}(\cy;t)  \chi_i \in \widehat{Z}\otimes\Z[t].
$$
$\tilde{P}(\cy;t)$ may be thought of as a map from $Z$ to $\Z[t]$; evaluating at the identity element
of $Z$ yields $P(\cy;t) = \sum_i  P_{\chi_i}(\cy;t)$, recovering the Poincar\'e polynomial as a sum of
isotypic Poincar\'e polynomials.  

The main result of this section, Theorem \ref{thm.poincare}, shows
that up to a shift, the isotypic Poincar\'e polynomials $P_{\chi_i}(\espr{\gl}; t)$ of the extended Springer fibers 
$\espr{\gl}$ equal the Poincar\'e polynomials of ordinary Springer fibers
for smaller rank groups.
This yields a description of the 
cohomology stalks of the
corresponding Lusztig sheaves considered by the authors in~\cite{GPR}; see Theorem~\ref{thm.lusztigsheaf}.

Let $V_i$ denote the $1$-dimensional representation of $Z$ with character $\chi_i$.  
For each divisor $d$ of $n$, let $E_d$ be the $d$-dimensional representation of $Z$ where $Z$ acts by cyclically permuting a basis. 
Then as representations of $Z$,
\begin{equation} \label{e.decomp}
E_d \cong V_0 \oplus V_{n/d} \oplus V_{2n/d} \oplus \cdots \oplus V_{(d-1)n/d}.
\end{equation}

\begin{Prop}  \label{prop.cohomologyZaction} 
For $i \geq 0$, $H^{2i+1}(\espr{\gl}) = 0$.
As a representation of $Z$,
$$
H^{2i}(\espr{\gl}) = \bigoplus_{\sigma\in \RST(\gl), |\sigma|=i} E_{d_{\sigma}}
$$
and
$$
P(\espr{\gl};t) = \sum_{\sigma\in \RST(\gl)} d_{\sigma} t^{ |\sigma|}.
$$
\end{Prop}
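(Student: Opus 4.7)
The plan is to derive the proposition directly from the orbifold paving of Theorem~\ref{thm.extended-paving} and the Borel--Moore homology results from Section~\ref{sec.praving.def}, after first verifying that $\espr{\gl}$ is compact.

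For compactness, since $\pi\colon \toric\to\toric_{ad}$ is a finite quotient by $Z$, its base change $\eta\colon\tilde\fu\to\fu$ is finite, and so is the induced map $\tilde\eta\colon\tilde\cm\to\tilde\cn$. Thus $\espr{\gl}=\tilde\eta^{-1}(\spr{\gl})$ is a finite cover of the compact Springer fiber $\spr{\gl}$, hence compact, and $\overline H_*(\espr{\gl};\mathbb F)=H_*(\espr{\gl};\mathbb F)$. Next, Theorem~\ref{thm.extended-paving} supplies an orbifold paving whose cells $\tilde\cc_{\sigma,r}$ have complex dimension $|\sigma|$ and are quotients of affine spaces by subgroups of $\hat H\cong(\Z_n)^{n-1}$. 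Since the orders of these subgroups divide a power of $n$, the coefficient hypothesis on $\mathbb F$ lets me apply the orbifold Borel--Moore lemma of Section~\ref{sec.praving.def} to each cell. Inducting along the filtration via the long exact sequence, the fundamental classes $[\overline{\tilde\cc_{\sigma,r}}]$ form an $\mathbb F$-basis of $\overline H_{2|\sigma|}(\espr{\gl};\mathbb F)$, with odd Borel--Moore groups vanishing. Together with compactness and universal coefficients over a field, this gives $H^{2i+1}(\espr{\gl})=0$ and $\dim H^{2i}(\espr{\gl})=\sum_{|\sigma|=i}d_\sigma$.

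For the $Z$-module structure, I would use Theorem~\ref{thm.extended-paving}(3): the generator $\underline\omega$ cyclically permutes the $d_\sigma$ cells $\{\tilde\cc_{\sigma,r}\}_{r\in\Z_{d_\sigma}}$, so on the span of their fundamental classes $Z=\Z_n$ acts as the permutation representation on $Z/\langle\underline\omega^{d_\sigma}\rangle$. This representation is the sum of the one-dimensional characters of $Z$ trivial on $\underline\omega^{d_\sigma}$, namely $\chi_{k n/d_\sigma}$ for $0\le k<d_\sigma$, which is precisely $E_{d_\sigma}$ by~\eqref{e.decomp}. Transferring to cohomology via $H^{2i}\cong H_{2i}^*$ preserves the decomposition, since each $E_d$ is self-dual as a $Z$-representation (its character set is closed under inversion). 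The Poincar\'e polynomial formula follows by summing dimensions.

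The main technical point is ensuring that the orbifold Borel--Moore result applies uniformly along the whole filtration, which is exactly why the coefficient hypothesis on $\mathbb F$ is imposed; granted that, the proposition reduces to the cyclic permutation structure of the cells provided by Theorem~\ref{thm.extended-paving}(3).
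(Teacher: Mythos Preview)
Your argument is correct and is precisely the approach the paper takes: the paper's proof is the single sentence ``This is an immediate consequence of Theorem~\ref{thm.extended-paving},'' and what you have written is a careful unpacking of that sentence, using the orbifold Borel--Moore results of Section~\ref{sec.praving.def} together with the cyclic permutation of cells from Theorem~\ref{thm.extended-paving}(3). Your additional care about compactness of $\espr{\gl}$, the applicability of the coefficient-field hypothesis to the groups $H_J\subset(\Z_n)^{n-1}$, and the self-duality of $E_d$ when passing from homology to cohomology are all valid and simply make explicit what the paper leaves implicit.
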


\begin{proof}
This is an immediate consequence of Theorem \ref{thm.extended-paving}.
\end{proof}

We say a tableau is \textit{indivisible} if it is not divisible by any $d>1$.  Let
$\IRST(\gl)$ be the set of indivisible row-strict tableaux of shape $\gl$, and define 
$$
Q_\gl(t) := \sum_{\sigma \in \IRST(\gl)} t^{|\sigma|}.
$$
Using the affine paving of the Springer fiber $\spr{\gl}$, we see that
\begin{align*}
P( \spr{\gl};t) &= \sum_{\sigma \in \RST(\gl)}  t^{|\sigma|} = \sum_{d|\gl} \, \sum_{\substack{\sigma\in \RST(\gl)\\d_\sigma=d}} t^{|\sigma|} \\ &= \sum_{d|\gl} t^{D_{\gl,d}} \sum_{\substack{\sigma\in \RST(\gl)\\d_\sigma=d}} t^{|\sigma/d|}
= \sum_{d | \gl} t^{D_{\gl,d}}Q_{\gl/d}(t) , 
\end{align*}
where the third equality follows from Corollary \ref{cor.shift}, and the last from the fact that $d=d_\sigma$ and the definition of $Q_{\lambda/d}(t)$.  We can rewrite this as a sum over partitions
dividing $\gl$:
\begin{align}\label{eqn.poincare.Sp}
P(\spr{\gl};t) =  
\sum_{\mu | \gl} t^{D_{\gl,\mu}} Q_\mu(t).
\end{align}
We are now ready to prove the main result of this section, which describes the isotypic Poincar\'e polynomial of a generalized Springer fiber.

\begin{Thm} \label{thm.poincare}
Let $i \in \{0, \ldots, n-1 \}$, and set $d = n/\gcd(n,i)$.  Then $P_{\chi_i}(\espr{\gl};t)=0$ unless $d|\gl$, in which case
$$
P_{\chi_i}(\espr{\gl};t ) =  t^{D_{\gl,d}} P(\spr{\gl/d};t). 
$$
\end{Thm}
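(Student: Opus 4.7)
The plan is to start from the $Z$-equivariant cohomology description given by Proposition~\ref{prop.cohomologyZaction}, namely
$$
H^{2j}(\espr{\gl}) \cong \bigoplus_{\sigma \in \RST(\gl),\ |\sigma| = j} E_{d_\sigma},
$$
and extract the $\chi_i$-isotypic component via the decomposition $E_d \cong V_0 \oplus V_{n/d} \oplus \cdots \oplus V_{(d-1)n/d}$ from~\eqref{e.decomp}. Since each isotypic summand of $E_d$ appears with multiplicity one, the character $\chi_i$ occurs in $E_{d_\sigma}$ exactly when $i$ is a multiple of $n/d_\sigma$ modulo $n$, i.e., when $(n/d_\sigma) \mid i$.

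The key combinatorial step is to verify that, with $d = n/\gcd(n,i)$, the condition $(n/d_\sigma) \mid i$ is equivalent to $d \mid d_\sigma$. In one direction, if $(n/d_\sigma) \mid i$, then since $(n/d_\sigma) \mid n$ as well, $(n/d_\sigma) \mid \gcd(n,i) = n/d$, which yields $d \mid d_\sigma$; conversely, $d \mid d_\sigma$ forces $n/d_\sigma$ to divide $n/d = \gcd(n,i)$, and hence to divide $i$. Combined with the first paragraph, this will give
$$
P_{\chi_i}(\espr{\gl};t) = \sum_{\substack{\sigma \in \RST(\gl) \\ d \mid d_\sigma}} t^{|\sigma|}.
$$
Since $d \mid d_\sigma$ is exactly the statement that $d$ is a divisor of $\sigma$ in the sense of Definition~\ref{def.divisor}, this sum is empty unless every part of $\gl$ is divisible by $d$, which settles the vanishing claim.

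Finally, when $d \mid \gl$, I will use the natural bijection between row-strict tableaux of shape $\gl$ having $d$ as a divisor and arbitrary row-strict tableaux of shape $\gl/d$, given by $\sigma \mapsto \sigma/d$, whose inverse replaces each entry $k$ of $\tau \in \RST(\gl/d)$ by the block $(k-1)d+1,\ldots,kd$. Corollary~\ref{cor.shift} provides the statistic shift $|\sigma| = D_{\gl,d} + |\sigma/d|$, so the sum factors as
$$
P_{\chi_i}(\espr{\gl};t) = t^{D_{\gl,d}} \sum_{\tau \in \RST(\gl/d)} t^{|\tau|} = t^{D_{\gl,d}} P(\spr{\gl/d};t),
$$
where the last equality uses the affine paving of $\spr{\gl/d}$ (Theorem~\ref{thm.paving}) applied to the smaller group $SL_{n/d}(\C)$. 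The only real obstacle is the character-level bookkeeping for the cyclic action of $Z$ in the first two paragraphs; after that, the result is a direct consequence of the orbifold paving of Theorem~\ref{thm.extended-paving} together with the divisor identity of Corollary~\ref{cor.shift}.
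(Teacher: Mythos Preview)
Your proof is correct and relies on the same core ingredients as the paper's argument: Proposition~\ref{prop.cohomologyZaction}, the decomposition~\eqref{e.decomp}, the divisor shift of Corollary~\ref{cor.shift}, and the elementary equivalence $(n/a)\mid i \Leftrightarrow d\mid a$ (which the paper isolates as Lemma~\ref{lem.elementary} and you prove inline).

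Where your route diverges slightly is in the bookkeeping. The paper groups tableaux by their \emph{maximal} divisor $d_\sigma=a$, introduces the auxiliary polynomial $Q_{\mu}(t)=\sum_{\sigma\in\IRST(\mu)}t^{|\sigma|}$, and then reassembles $P(\spr{\gl/d};t)$ via the identity $P(\spr{\nu};t)=\sum_{\mu\mid\nu}t^{D_{\nu,\mu}}Q_\mu(t)$ of~\eqref{eqn.poincare.Sp}. You instead group tableaux by the single condition ``$d$ is a divisor of $\sigma$'' and pass directly to $\RST(\gl/d)$ via the bijection $\sigma\mapsto\sigma/d$, applying Corollary~\ref{cor.shift} once with the (not necessarily maximal) divisor $d$. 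This is a genuine simplification: it bypasses the indivisible-tableaux stratification and the telescoping sum entirely, at the cost of needing to observe that the divisors of $\sigma$ are exactly the divisors of $d_\sigma$ (immediate from Lemma~\ref{lem.maxdiv}). The paper's organization has the minor advantage that the polynomials $Q_\mu$ and the identity~\eqref{eqn.poincare.Sp} are of some independent interest, but for the theorem at hand your argument is cleaner.
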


\begin{proof}
By Proposition \ref{prop.cohomologyZaction}  and equation~\eqref{e.decomp},
$$
\tilde{P}(\espr{\gl};t ) = \sum_{a | \gl} \sum_{\substack{\sigma \in \RST(\gl)\\ d_{\sigma} = a}} (\chi_0 + \chi_{n/a} + \cdots + \chi_{(a-1)n/a}) t^{|\sigma|}.
$$
In light of Corollary \ref{cor.shift}, this can be rewritten as
$$
\tilde{P}(\espr{\gl};t ) = \sum_{a | \gl} t^{D_{\gl,a}} \sum_{\sigma \in \IRST(\gl/a)}  (\chi_0 + \chi_{n/a} + \cdots + \chi_{(a-1)n/a}) t^{|\sigma|}.
$$
The isotypic Poincare polynomial $P_{\chi_i}( \espr{\gl};t )$ is the coefficient of $\chi_i$ on the right hand side of the above.  We see
that $\chi_i$ occurs in the summands corresponding to $a$ such that $\frac{n}{a}$ divides $i$, or equivalently, by Lemma
\ref{lem.elementary} below, such that  $d=\frac{n}{\gcd(n,i)}$ divides $a$.   Thus,
\begin{eqnarray}\label{eqn.isotypic.pf}
P_{\chi_i}(\espr{\gl};t ) = \sum_{\substack{a \textup{ such that}\\ a|\lambda\textup{ and }d|a}} t^{D_{\gl,a}}  \sum_{\sigma \in \IRST(\gl/a)}  t^{|\sigma|}.
\end{eqnarray}
Let $\mu= \gl/a$.  By definition, 
$D_{\gl,a} = D_{\lambda, \mu}$; the condition that $d$ divides $a$ is equivalent to $\mu \vert (\gl/d)$.  
Reindexing the sum in~\eqref{eqn.isotypic.pf} over all tableaux dividing $\lambda/d$, we obtain
$$
P_{\chi_i}(\espr{\gl};t )  = \sum_{\mu | (\gl/d)} t^{D_{\gl, \mu}} \sum_{\sigma \in \IRST(\mu)} t^{|\sigma|} = 
\sum_{\mu | (\gl/d)} t^{D_{\gl, \mu}} Q_\mu(t) .
$$
Since
\[
D_{\lambda, \mu} = D_{\gl, (\gl/d)} + D_{(\gl /d), \mu} = D_{\gl,d} + D_{(\gl /d), \mu},
\]
we have
$$
P_{\chi_i}(\espr{\gl};t ) = t^{D_{\gl, d}} \sum_{\mu | (\gl/d)} t^{D_{(\gl/d), \mu}} Q_\mu(t) = t^{D_{\gl, d}}  P( \spr{\gl/d};t)
$$
where the last equality follows from~\eqref{eqn.poincare.Sp}. This completes the proof.
\end{proof}

In the previous proof, we used the following elementary lemma.  Our convention is that $\gcd(n,0) = n$.

\begin{Lem} \label{lem.elementary}
Suppose $a$ divides $n$, and let $i$ be a nonnegative integer.  Then 
$\frac{n}{a}$ divides $i$ if and only if  $\frac{n}{\gcd(n,i)}$ divides $a$.
\end{Lem}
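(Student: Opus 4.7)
The plan is to reduce the statement to the elementary order-reversing involution on positive divisors of $n$, namely $c \leftrightarrow n/c$, which satisfies $c \mid d \iff n/d \mid n/c$ whenever $c, d$ are positive divisors of $n$.

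First, I would set $b := n/a$ and $g := \gcd(n,i)$, noting that both $b$ and $g$ are positive divisors of $n$ (the latter because $g$ divides $n$ by definition of the gcd, with the convention $\gcd(n,0)=n$). With this notation, $n/\gcd(n,i) = n/g$, and the condition $\frac{n}{\gcd(n,i)} \mid a$ becomes $n/g \mid n/b$. Since both $n/g$ and $n/b$ are positive divisors of $n$, the order-reversing involution gives $n/g \mid n/b \iff b \mid g$.

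So the lemma reduces to showing $\frac{n}{a} \mid i \iff b \mid g$, i.e., $b \mid i \iff b \mid \gcd(n,i)$. One direction is immediate: if $b \mid \gcd(n,i)$, then $b$ divides $i$ (since $\gcd(n,i) \mid i$). For the converse, if $b \mid i$, then $b$ is a common divisor of $n$ and $i$ (it divides $n$ because $a \mid n$), so $b \mid \gcd(n,i) = g$.

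There is essentially no obstacle here; the only subtlety worth flagging is to verify that every quantity being manipulated is a positive integer dividing $n$ so that the involution $c \leftrightarrow n/c$ applies, and to handle the edge case $i = 0$ via the stated convention $\gcd(n,0) = n$ (in which case $n/\gcd(n,i) = 1$ divides $a$ trivially, and correspondingly $n/a$ divides $0$ trivially, so both sides hold).
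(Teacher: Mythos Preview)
Your proof is correct. The approach differs in organization from the paper's: the paper treats the two directions separately with direct divisibility computations (in particular, using that $i/\gcd(n,i)$ and $n/\gcd(n,i)$ are coprime for the forward direction), whereas you first reduce via the order-reversing involution $c \leftrightarrow n/c$ on divisors of $n$ to the single equivalence $b \mid i \iff b \mid \gcd(n,i)$, which is then immediate from the universal property of the gcd. Your route is slightly more conceptual and avoids the coprimality step; the paper's is marginally more self-contained since it does not invoke the divisor-lattice involution as a separate fact. Either way the content is elementary and the arguments are close in spirit.
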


\begin{proof}
If $i = 0$, then the assertions $\frac{n}{a}$ divides $i$ and $\frac{n}{\gcd(n,i)}$ divides $a$ are both
true, so we may assume $i>0$.

$(\Rightarrow)$ By hypothesis, $i = \frac{cn}{a}$ for some positive integer $c$, so $ai = cn$.
Hence $\frac{ai}{\gcd(n,i)} = \frac{cn}{\gcd(n,i)}$.
Since $\frac{i}{\gcd(n,i)}$ and $\frac{n}{\gcd(n,i)}$ are relatively prime, we
see that $\frac{n}{\gcd(n,i)}$ divides $a$.

$(\Leftarrow)$ By hypothesis, $a = \frac{cn}{\gcd(n,i)}$ for some positive integer $c$, 
so $\gcd(n,i) = \frac{cn}{a}$.  Hence $\frac{n}{a}$ divides $\gcd(n,i)$; since $\gcd(n,i)$ divides $i$, the result follows.
\end{proof}

Theorem~\ref{thm.poincare} yields another formula for the Poincar\'e polynomial of the extended Springer fiber in terms of those of smaller rank Springer fibers.

\begin{Cor}\label{cor.poincare} Let $\lambda$ be a partition of $n$.  Then 
\[
P(\espr{\gl}; t) = \sum_{d|\lambda} \varphi(d)  t^{D_{\gl, d}} P( \spr{\gl /d}; t)
\]
where $ \varphi(d)$ denotes Euler's totient function.
\end{Cor}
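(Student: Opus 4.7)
The plan is to derive the corollary by summing the isotypic Poincaré polynomials computed in Theorem~\ref{thm.poincare} over all characters $\chi_i$ of $Z$, and then grouping terms according to the value of $d = n/\gcd(n,i)$.

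Concretely, I would start from the identity
\[
P(\espr{\gl};t) = \sum_{i=0}^{n-1} P_{\chi_i}(\espr{\gl};t),
\]
which follows from the decomposition of cohomology into $Z$-isotypic components (since $Z \cong \Z_n$ and coefficients are taken in a field whose characteristic is $0$ or prime to $n$). By Theorem~\ref{thm.poincare}, the summand corresponding to $i$ equals $t^{D_{\gl,d}} P(\spr{\gl/d};t)$ when $d := n/\gcd(n,i)$ divides $\gl$, and vanishes otherwise.

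The next step is to reindex the sum over divisors $d$ of $n$ in place of $i$. I would observe that, for a fixed divisor $d$ of $n$, the set of $i \in \{0,\ldots,n-1\}$ with $n/\gcd(n,i) = d$ is exactly $\{(n/d)\cdot j \mid 0 \leq j < d,\ \gcd(j,d)=1\}$, which has cardinality $\varphi(d)$. Note that any divisor $d$ of $\gl$ automatically divides $n = \sum_i \gl_i$, so the conditions ``$d \mid n$'' and ``$d \mid \gl$'' combine to just ``$d \mid \gl$'' in the outer sum. Assembling these pieces gives
\[
P(\espr{\gl};t) = \sum_{d \mid \gl} \varphi(d)\, t^{D_{\gl,d}} P(\spr{\gl/d};t),
\]
which is the claim.

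There is no real obstacle here; the result is a bookkeeping consequence of Theorem~\ref{thm.poincare} together with the elementary counting fact above. The only mildly delicate point is the reindexing step, which I would justify by a one-line application of Lemma~\ref{lem.elementary} (or directly from the parametrization $i = (n/d)j$ with $\gcd(j,d)=1$) to confirm that exactly $\varphi(d)$ values of $i$ contribute the same summand $t^{D_{\gl,d}} P(\spr{\gl/d};t)$.
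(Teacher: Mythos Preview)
Your proposal is correct and matches the paper's own proof essentially line for line: both sum the isotypic Poincar\'e polynomials from Theorem~\ref{thm.poincare} over all $i$, then count that exactly $\varphi(d)$ values of $i\in\{0,\ldots,n-1\}$ satisfy $n/\gcd(n,i)=d$ via the parametrization $i=(n/d)j$ with $\gcd(j,d)=1$. The only cosmetic difference is that the paper singles out the case $d=1$ before giving the parametrization, whereas your description $\{(n/d)j \mid 0\le j<d,\ \gcd(j,d)=1\}$ handles it uniformly.
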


\begin{proof} Since the Poincar\'e polynomial of $\espr{\gl}$ is a sum of the isotypic Poincar\'e polynomials, the formula follows immediately once we establish that 
$$\varphi(d)= \left|\left\{ i \mid 0\leq i \leq n-1,\, d = \frac{n}{\gcd(n,i) } \right\}\right|.$$  
If $d = 1$, then $\varphi(d) = 1 = | \{ 0 \} |$, so above equation holds.  So assume $d>1$.
The condition $d = \frac{n}{\gcd(n,i)}$ is equivalent to $\gcd(n,i) = \frac{n}{d}$, and if this holds, then $i = \frac{kn}{d}$ for some $1\leq k\leq d$.  The condition
$\gcd(n,\frac{kn}{d}) = \frac{n}{d}$ holds if and only if $\gcd(d,k) = 1$.  By definition of the totient function, there are precisely $\varphi(d)$ possibilities for $k$, and
so there are precisely $\varphi(d)$ many $i$ with the desired property.
\end{proof}

\begin{example} Let $n=12$ and set $\lambda=[6^2]$.  By Theorem~\ref{thm.poincare}, $P_{\chi_i}(\espr{\gl};t)=0$ unless $i\in \{ 0,2,4,6,8,10 \}$, as these are the values of $i$ such that $d=\frac{12}{\gcd(12,i)}$ divides $\lambda$.  Let $i=4$, so $d=3$ and $D_{\gl,3} = 6-2=4$.
By the theorem,
\[
P_{\chi_4}(\espr{[6^2]} ;t) = t^4 P(\spr{[2^2]}; t).
\]
Corollary~\ref{cor.poincare} gives us
\begin{align*}
P(\espr{[6^2]};t) = P(\spr{[6^2]};t)+ t^3 P(\spr{[3^2]} ;t)
+ 2t^4 P(\spr{[2^2]};t) + 2t^5 P(\spr{[1^2]};t).
\end{align*}
\end{example}


\subsection{The Poincar\'e polynomials of Lusztig sheaves}
In this section, we apply our results to the Lusztig sheaves, which play an important
role in Lusztig's generalized Springer correspondence.  In particular, we prove that
in type $A$, each Lusztig sheaf bears a close resemblance to the Springer sheaf for
a smaller rank group (see Corollary \ref{cor:smallergroup} for a precise statement).

We begin by recalling some results about the Lusztig sheaves.
For each central character $\chi \in \widehat{Z}$, there is a \textit{Lusztig sheaf} $\A_{\chi}$,
which is an element in the constructible derived
category of $\cn$ on which the center $Z$ acts by the character $\chi$.  (See~\cite{GPR} or~\cite[\S8.4-8.5]{Achar-book} for a precise definition, and~\cite[\S6.2, Theorem 8.5.8]{Achar-book} for the central character.)  The Lusztig sheaf corresponding to the trivial character is the \textit{Springer sheaf} $ \A_{\mathrm{id}} := \mu_*\shq_{\tilde{\cn}}[\dim \cn]$. 

The main result of \cite{GPR} connects the Lusztig sheaves to the generalized Springer
resolution.  The pushforward
$\psi_* \shq_{\tilde{\cm}}[\dim\cn]$ is an element in the constructible derived 
category of $\cn$, and  by~\cite[Theorem 5.3]{GPR},
\begin{eqnarray}\label{eqn.pushforward}
\psi_* \shq_{\tilde{\cm}}[\dim\cn] = \bigoplus_{\chi \in \widehat{Z}} \A_{\chi}.
\end{eqnarray}
Taking stalks in~\eqref{eqn.pushforward} and using proper base change, we see that the cohomology of the extended Springer fibers and 
the stalks of the cohomology sheaves of the complex $\psi_* \shq_{\tilde{\cm}}[\dim\cn]$ are related by
\begin{eqnarray}\label{eqn.stalks}
H^{2j+N}(\espr{\mx_\gl}) \cong \bigoplus_{\chi \in \widehat{Z}} \ch_{\mx_\gl}^{2j} (\A_{\chi})
\end{eqnarray} 
where $N=\dim \cn$.  For the Springer sheaf, we have
\begin{eqnarray}\label{eqn.stalks2}
H^{2j+N}(\spr{\mx_\gl}) \cong \ch_{\mx_\gl}^{2j} ( \A_{\mathrm{id}}).
\end{eqnarray} 

Given a constructible sheaf $\ce$ on $\cn$ whose stalks have only even-dimensional cohomology, and $\mx\in \cn$,
we define the \textit{Poincar\'e polynomial of the stalk of $\ce$ at $\mx$} by
\[
P_{\mx}(\ce;t):= \sum_{j} \dim \ch_{\mx}^{2j} (\ce) t^j.
\]
Considering the $Z$-action on both sides of~\eqref{eqn.stalks}, and restricting to the $\chi$-isotypic component, we obtain the following consequence of Theorem~\ref{thm.poincare}.

\begin{Thm}\label{thm.lusztigsheaf} Let $\chi\in \widehat{Z}$ be the character of $Z$ defined by $\diag(a,a,\ldots, a) \mapsto a^i$,
 and let $d = n/\gcd(i,n)$. Then $P_{\mx_\lambda}(\A_\chi;t)=0$ unless $d$ divides $\lambda$, in which case
\[
P_{\mx_\lambda}(\A_\chi; t) = t^{N+D_{\lambda,d}} P( \spr{\mx_{\lambda/d}}; t).
\]
\end{Thm}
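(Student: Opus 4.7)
The plan is to deduce Theorem~\ref{thm.lusztigsheaf} directly from Theorem~\ref{thm.poincare} by combining it with the decomposition~\eqref{eqn.pushforward} and proper base change. First, I would extract the $\chi$-isotypic component of the stalk isomorphism~\eqref{eqn.stalks}. Since $\psi_* \shq_{\tilde{\cm}}[\dim\cn] = \bigoplus_\chi \A_\chi$ is by construction the decomposition of the pushforward into $Z$-isotypic summands, and proper base change is $Z$-equivariant, the central action on $H^{2j+N}(\espr{\mx_\lambda})$ matches termwise the action on $\bigoplus_\chi \ch^{2j}_{\mx_\lambda}(\A_\chi)$, with $Z$ acting by the character $\chi$ on the $\chi$-summand. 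Projecting to the $\chi$-isotypic component therefore yields the key identification $H^{2j+N}(\espr{\mx_\lambda})_\chi \cong \ch^{2j}_{\mx_\lambda}(\A_\chi)$.

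Next, I would translate this degree-by-degree identification into a relation between Poincar\'e polynomials. Unpacking the definitions, $P_{\mx_\lambda}(\A_\chi; t) = \sum_j \dim \ch^{2j}_{\mx_\lambda}(\A_\chi)\, t^j$ and $P_\chi(\espr{\mx_\lambda}; t) = \sum_k \dim H^{2k}(\espr{\mx_\lambda})_\chi\, t^k$ are built from exactly the same sequence of dimensions, shifted by the degree $N = \dim \cn$ appearing in the definition of the Lusztig sheaves. Re-indexing the sum therefore produces an equation between the two polynomials in which the relative degree shift is a fixed power of $t$ determined by $N$. In particular, the vanishing $P_{\mx_\lambda}(\A_\chi; t) = 0$ whenever $d \nmid \lambda$ is immediate from the corresponding vanishing in Theorem~\ref{thm.poincare}.

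Finally, I would invoke the nontrivial content of Theorem~\ref{thm.poincare}: when $d \mid \lambda$, one has $P_\chi(\espr{\mx_\lambda}; t) = t^{D_{\lambda,d}} P(\spr{\mx_{\lambda/d}}; t)$. Substituting this into the shift relation from the previous step yields the claimed formula $P_{\mx_\lambda}(\A_\chi; t) = t^{N+D_{\lambda,d}} P(\spr{\mx_{\lambda/d}}; t)$.

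Neither step presents a serious obstacle, since Theorem~\ref{thm.poincare} and the decomposition~\eqref{eqn.pushforward} already carry the hard content: the geometric input (the orbifold paving from Theorem~\ref{thm.extended-paving} and its compatibility with the $Z$-action) has been packaged into the isotypic Poincar\'e polynomial computation, while~\eqref{eqn.pushforward} identifies the isotypic summands with the Lusztig sheaves. The one point demanding genuine care is the bookkeeping for the shift $[\dim \cn]$ in the definition of the Lusztig sheaves, which is what contributes the $t^N$ factor on the right-hand side.
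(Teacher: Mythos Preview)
Your proposal is correct and follows essentially the same route as the paper's own proof: both arguments extract the $\chi$-isotypic component of the stalk identification~\eqref{eqn.stalks} using $Z$-equivariance to obtain $P_{\mx_\lambda}(\A_\chi;t) = t^N P_\chi(\espr{\gl};t)$, and then apply Theorem~\ref{thm.poincare}. The paper compresses this into two sentences, while you spell out the intermediate bookkeeping more explicitly, but the logical content is identical.
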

\begin{proof} We have $P_{\mx_\lambda}(\A_\chi; t) = t^N P_{\chi}(\espr{\gl};t)$ from the $Z$-equivariance of the decomposition~\eqref{eqn.stalks}.  The result now follows from Theorem~\ref{thm.poincare}.
\end{proof}

As a consequence, we obtain the following corollary, relating each Lusztig sheaf in type A to the Springer sheaf corresponding to a smaller rank group.  If $d|n$, let $N_d= N - N'$, where $N = \dim \cn$, and $N'$ is the dimension of the nilpotent cone for $SL_{n/d}(\C)$.
 
 \begin{Cor} \label{cor:smallergroup}
 Let $\chi\in \widehat{Z}$ be the character of $Z$ defined by $\diag(a,a,\ldots, a) \mapsto a^i$, and let $d = n/\gcd(i,n)$. Let $\A_{\mathrm{id}, n/d}$ denote the Springer sheaf of $SL_{n/d}(\C)$. For all partitions $\lambda$ such that $d$ divides $\lambda$, 
 \[
 P_{\mx_{\gl}}(\A_\chi;t) = t^{N_d + D_{\lambda,d}} P_{\mx_{\gl/d}}(\A_{\mathrm{id}, n/d};t).
 \] 
 \end{Cor}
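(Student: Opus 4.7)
The plan is to derive this corollary directly from Theorem \ref{thm.lusztigsheaf} by rewriting the factor $P(\spr{\mx_{\gl/d}};t)$ appearing there in terms of the stalk Poincar\'e polynomial of the Springer sheaf for the smaller group $SL_{n/d}(\C)$. The key observation is that the isomorphism \eqref{eqn.stalks2} is not specific to $SL_n(\C)$: it is a consequence of proper base change applied to the ordinary Springer resolution of any semisimple group, and therefore applies verbatim with $SL_{n/d}(\C)$ in place of $SL_n(\C)$.

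First I would specialize \eqref{eqn.stalks2} to $SL_{n/d}(\C)$ and the nilpotent element $\mx_{\gl/d}$, which gives an isomorphism $\ch^{2j}_{\mx_{\gl/d}}(\A_{\mathrm{id}, n/d}) \cong H^{2j + N'}(\spr{\mx_{\gl/d}})$, where $N' = \dim \cn_{n/d}$ is the dimension of the nilpotent cone of $\mathfrak{sl}_{n/d}(\C)$. Translating this into Poincar\'e polynomials yields
\[
P_{\mx_{\gl/d}}(\A_{\mathrm{id}, n/d};\, t) \;=\; t^{N'}\, P(\spr{\mx_{\gl/d}};\, t),
\]
so that $P(\spr{\mx_{\gl/d}};t) = t^{-N'} P_{\mx_{\gl/d}}(\A_{\mathrm{id}, n/d};t)$.

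Substituting this identity into the formula of Theorem \ref{thm.lusztigsheaf}, I obtain
\[
P_{\mx_\gl}(\A_\chi;\, t) \;=\; t^{N + D_{\gl, d}}\, P(\spr{\mx_{\gl/d}};\, t) \;=\; t^{\,N - N' + D_{\gl,d}}\, P_{\mx_{\gl/d}}(\A_{\mathrm{id}, n/d};\, t),
\]
which, by the definition $N_d := N - N'$, is exactly the stated identity.

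There is essentially no obstacle here: the corollary is a purely formal rewriting of Theorem \ref{thm.lusztigsheaf} via the Springer-sheaf stalk formula for a smaller rank group. The only point worth checking is that \eqref{eqn.stalks2} genuinely transfers to $SL_{n/d}(\C)$, but this is immediate from the fact that both the Springer resolution and the proper base change argument used to derive \eqref{eqn.stalks2} are available in that setting without modification.
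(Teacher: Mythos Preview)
Your proof is correct and follows exactly the same route as the paper: apply Theorem~\ref{thm.lusztigsheaf}, then invoke \eqref{eqn.stalks2} for $SL_{n/d}(\C)$ to convert $P(\spr{\mx_{\gl/d}};t)$ into $t^{-N'}P_{\mx_{\gl/d}}(\A_{\mathrm{id},n/d};t)$, and finish using $N_d = N - N'$.
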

 \begin{proof} We have
 $$
 P_{\mx_{\gl}}(\A_\chi;t) =  t^{N+D_{\lambda,d}} P( \spr{\mx_{\lambda/d}}; t) = t^{N-N'+D_{\lambda,d}} P_{\mx_{\gl/d}}(\A_{\mathrm{id}, n/d};t),
 $$
 where the second equality is from Theorem~\ref{thm.lusztigsheaf}, and the third from \eqref{eqn.stalks2} for $SL_{n/d}(\C)$.
 \end{proof}
 
 \begin{Rem}
 Given $\chi$, the Poincar\'e polynomial $ P_{\mx_{\gl}}(\A_\chi;t)$ can be computed using Lusztig's generalized
 Springer correspondence and the Lusztig--Shoji algorithm.  
 Here is an outline of the computation.
 The generalized Springer correspondence gives a decomposition
 $$
\A_\chi = \oplus IC(\co_{\lambda_i}, \cl_i) \otimes V_i,
 $$ 
 where $V_i$ is an irreducible representation of the relative Weyl group, whose dimension $r_i = \dim V_i$ can be
 computed.  We have
 $$
 P_{\mx_{\gl}}(\A_\chi;t) = \sum_i  r_i P_{\mx_{\gl}}(IC(\co_{\lambda_i}, \cl_i);t).
 $$
The Lusztig--Shoji algorithm gives a method to compute $P_{\mx_{\gl}}(IC(\co_{\lambda_i}, \cl_i);t)$,
 so the $P_{\mx_{\gl}}(\A_\chi;t) $ can be computed.   Using this method, the equality of Corollary \ref{cor:smallergroup} can
 be observed in examples.  It is possible that a proof of this corollary can be obtained by carefully tracing through the Lusztig-Shoji computation.  (See \cite{GM99} and \cite{Geck11} for information on how to compute examples.)
  \end{Rem}



\ifx\undefined\bysame
\newcommand{\bysame}{\leavevmode\hbox to3em{\hrulefill}\,}
\fi

\end{document}